\DeclareMathOperator{\id}{id}
\DeclareMathOperator{\Spec}{Spec}
\DeclareMathOperator{\Proj}{Proj}
\DeclareMathOperator{\im}{im}
\DeclareMathOperator{\Sym}{Sym}
\DeclareMathOperator{\Zo}{\mathcal{Z}}
\DeclareMathOperator{\CH}{CH}
\DeclareMathOperator{\coker}{coker}
\DeclareMathOperator{\Sq}{Sq}
\newcommand{\inv}{^G}
\newcommand{\Oc}{\mathcal{O}}
\newcommand{\Zz}{\mathbb{Z}}
\newcommand{\Ic}{\mathcal{I}}
\newcommand{\Jc}{\mathcal{J}}
\newcommand{\Fc}{\mathcal{F}}
\newcommand{\Rg}[1]{\mathcal{R}_{#1}}
\newcommand{\Rgg}[1]{S_{#1}}
\newcommand{\Tan}{T}
\newcommand{\Lg}[1]{\mathcal{L}_{#1}}
\newcommand{\Hg}[1]{H_{#1}}
\newcommand{\St}[1]{\mathcal{S}_{#1}}
\newcommand{\Std}[1]{\mathcal{P}_{#1}}
\newcommand{\Au}{\mathbb{A}^1}
\newcommand{\et}[1]{\operatorname{Sq}({#1})}
\newcommand{\etd}[1]{s_{#1}}
\newtheorem{theorem}{Theorem}[section]
\newaliascnt{proposition}{theorem}
\newtheorem{proposition}[proposition]{Proposition}
\newaliascnt{lemma}{theorem}
\newtheorem{lemma}[lemma]{Lemma}
\newaliascnt{corollary}{theorem}
\newtheorem{corollary}[corollary]{Corollary}
\theoremstyle{definition}
\newaliascnt{remark}{theorem}
\newtheorem{remark}[remark]{Remark}
\newaliascnt{example}{theorem}
\newtheorem{example}[example]{Example}
\newaliascnt{definition}{theorem}
\newtheorem{definition}[definition]{Definition}
\newaliascnt{notation}{theorem}
\newtheorem{notation}[notation]{Notation}
\begin{document}
\begin{abstract}
To an algebraic variety equipped with an involution, we associate a cycle class in the modulo two Chow group of its fixed locus. This association is functorial with respect to proper morphisms having a degree and preserving the involutions. Specialising to the exchange involution of the square of a complete variety, we obtain Rost's degree formula in arbitrary characteristic (this formula was proved by Rost/Merkurjev in characteristic not two). 
\end{abstract}
\author{Olivier Haution}
\title{Involutions of varieties and Rost's degree formula}
\email{olivier.haution at gmail.com}
\address{Mathematisches Institut, Ludwig-Maximilians-Universit\"at M\"unchen, Theresienstr.\ 39, D-80333 M\"unchen, Germany}
\subjclass[2010]{14C17}
\keywords{Degree formula, Segre class, Steenrod squares}

\maketitle

\section*{Introduction}
The degree formula provides a connection between arithmetic and geometric properties of algebraic varieties, by relating the degrees of closed points, the values of characteristic numbers, and the degrees of rational maps. It has applications to questions of algebraic nature, in particular concerning quadratic forms.

When $X$ is a smooth, connected, complete variety, its Segre number $\etd{X}$ is the degree of the highest Chern class of the opposite of its tangent bundle:
\[
\etd{X} = \deg c_{\dim X}(-\Tan_X) \in \Zz.
\]
The index $n_X$ of $X$ is the g.c.d.\ of the degrees of the closed points of $X$; it is a divisor of $\etd{X}$. The degree formula provides a relation between $\etd{X}$ (a geometric, or topological, invariant) and $n_X$ (an arithmetic invariant). More precisely, let $f\colon Y \dasharrow X$ be a rational map of connected (smooth complete) varieties of the same dimension. Then $n_X \mid n_Y$. The degree of $f$, denoted $\deg f$, is defined as zero when $f$ is not dominant, and as the degree of the function field extension otherwise. The \emph{degree formula} is the relation in $\Zz/2$
\[
\frac{n_Y}{n_X} \cdot \frac{\etd{Y}}{n_Y} = \deg f \cdot \frac{\etd{X}}{n_X} \mod 2.
\]

A typical application of the degree formula is the following: if the integer $\etd{X}/n_X$ is odd, then the variety $X$ is incompressible, which means that every rational map $X \dasharrow X$ is dominant. For example, this is so when $X$ is an anisotropic quadric of dimension $2^n-1$; this explains why anisotropic non-degenerate quadratic forms of dimension $2^n+1$ always have first Witt index equal to one.

Variants of the degree formula were initially considered by Voevodsky \cite{Voe-96} as a step in the proof of the Milnor conjecture (see \cite{Rost-icm}). One can prove the degree formula using cohomology operations \cite{Mer-St-03}, algebraic cobordism \cite[\S4.4]{LM-Al-07}, or by more elementary means \cite{Mer-df-notes}. But, until now, no proof of the formula was valid over a field of characteristic two. In this paper we provide a proof working in arbitrary characteristic, in the spirit of the elementary approach of Rost and Merkurjev.

The degree formula will appear as a special case of a more general result concerning a certain invariant of involutions of varieties. Even if one is only interested in the degree formula, we believe that this more general setting provides flexibility, and makes many arguments and constructions more natural.

The datum of an involution on a variety is equivalent to that of an action of the group $G$ with two elements. We call a $G$-variety ``pure'' if its fixed locus is an effective Cartier divisor. Any $G$-variety can be approximated, in an appropriate sense, by a pure $G$-variety, and many arguments in the paper proceed by reduction to the pure case. We associate to each $G$-variety $Y$ a cycle class $\St{Y}$ in the Chow group of its fixed locus. The proof that this class, modulo two, is functorial with respect to proper morphisms with a degree occupies the main part of the paper. When $X$ is a variety, and $G$ acts by exchange on $X \times X$, the class $\St{X \times X}$ may be identified with the total Segre class $\et{X} \in \CH(X)$ of the tangent cone of $X$. The degree formula amounts to the functoriality of the degree $\etd{X}$ of the component of degree zero of $\et{X}$, when $X$ is complete. But the cycle class $\et{X}$ carries much more information than its degree, and moreover is also defined when $X$ is not complete: we expect $\et{X}$ modulo two to be the total homological Steenrod square of the fundamental class of $X$ (Steenrod squares have been constructed only in characteristic different from two).\\

The paper is organised as follows. We provide in \S\ref{sect:Notations} the notation and conventions used in the paper, and we define in \S\ref{sect:degree} the degree of a morphism, and give some of its elementary properties. In \S\ref{sect:G}, we consider actions of the group $G$ with two elements on varieties. We recall in \S\ref{sect:Gquotients} some facts on $G$-quotients, introduce in \S\ref{sect:fixed} a certain sheaf $\Lg{Y}$ on the quotient $Y/G$ of a $G$-variety $Y$, define pure $G$-varieties in \S\ref{sect:pure}, and discuss how blowing-up closed subschemes affects $G$-actions in \S\ref{sect:antisym}. Next, in \S\ref{sect:res}, we associate to a $G$-equivariant morphism $f\colon Y \to X$ a certain closed subscheme $R_f$ of $Y$, which may be thought of as the bad locus of $f$ with respect to the $G$-actions. Then we define in \S\ref{sect:P} a cycle class $\Std{Y}$ in the Chow group of the fixed locus of a pure $G$-variety $Y$, and prove that it is functorial modulo two with respect to proper morphisms having a degree (\autoref{lemm:main}). This is the main technical step in the paper; it is achieved by gaining some control on the cycles supported on the locus $R_f$. We generalise this functoriality to arbitrary $G$-varieties in \S\ref{sect:P}, for which the cycle class is denoted by $\St{Y}$ instead of $\Std{Y}$, and obtain the main result of the paper (\autoref{th:main_G}). In \S\ref{sect:exch}, we specialise to the $G$-action by exchange on the square of a variety, compute the class $\Sq(X)=\St{X \times X}$ when $X$ is smooth (\autoref{prop:sq_c}), and prove the degree formula (\autoref{cor:rost_df}). Finally we recall classical consequences of the formula in \S\ref{sect:applications}.\\

\paragraph{\textbf{Acknowledgements.}} I would like to acknowledge the crucial influence of the ideas of Markus Rost, and especially those appearing in the introduction of the preprint \cite{Ros-On-08}. In particular his proof that the Segre number is even in characteristic two was the starting point of this work. I also drew inspiration from the construction of Steenrod squares given in \cite[\S2]{Vis-SymRu} and \cite[Chapter~XI]{EKM} (related constructions are performed in \cite[\S5]{Vi-Sym}). I thank the referee for his careful reading of the paper, and his constructive suggestions.

\section{Notation}
\label{sect:Notations}

\subsection{Varieties}
We work over a fixed base field $k$. By a scheme, we will mean a separated scheme of finite type over $k$, and  by a morphism, a $k$-morphism. A \emph{variety} is a quasi-projective scheme over $k$. A scheme is complete if it is proper over $k$. When $X$ and $Y$ are schemes, a rational map $Y \dasharrow X$ is a morphism $U\to X$, for some open dense subscheme $U$ of $Y$. The function field of an integral scheme $X$ will be denoted by $k(X)$. When $Y$ and $X$ are two schemes, we denote by $Y \times X$ their fibre product over $k$.

We say that a morphism $f\colon Y\to X$ is an isomorphism off a closed subset $Z$ of $X$, if the morphism $Y - f^{-1}Z \to X -Z$ obtained by base change is an isomorphism. 

\subsection{Group of cycles} \label{sect:Zo} For a scheme $X$, we let $\Zo(X)$ be the free abelian group generated by the elements $[V]$, where $V$ runs over the integral closed subschemes of $X$. When $f \colon Y \to X$ is a morphism, a group homomorphism $f_* \colon \Zo(Y) \to \Zo(X)$ is defined as follows. Let $W$ be an integral closed subscheme of $Y$, and $V$ be the (scheme theoretic) closure of $f(W)$ in $X$. Then we let
\[
f_*[W] =\left\{ \begin{array}{rl}
		  [k(W):k(V)]\cdot [V] &\mbox{ if $\dim W =\dim V$}, \\
		  0 &\mbox{ otherwise}.
       		\end{array}
	\right.
\]
One checks that this defines a functor from the category of schemes to the category of abelian groups. When $f$ is an immersion, the morphism $f_*$ is injective, and we will view $\Zo(Y)$ as a subgroup of $\Zo(X)$.

The group $\Zo(X)$ is the direct sum over $n \in \mathbb{N}$ of the subgroups $\Zo_n(X)$ generated by the elements $[V]$ such that $\dim V=n$. When $T$ is a closed subscheme of $X$, with irreducible components $T_\alpha$ and multiplicities $m_\alpha$, we define the class $[T]=\sum_\alpha m_\alpha [T_\alpha] \in \Zo(X)$. When $f\colon Y \to X$ is a flat morphism of constant relative dimension, there is a morphism $f^* \colon \Zo(X) \to \Zo(Y)$ such that $f^*[Z] = [f^{-1}Z]$ for any closed subscheme $Z$ of $X$ (see \cite[Lemma~1.7.1]{Ful-In-98}).

The quotient of $\Zo(X)$ by rational equivalence is $\CH(X)$, the Chow group of $X$. We will use its functorialities described in \cite{Ful-In-98} (where the notation $A_*$ is used instead of $\CH$).

\subsection{Flat morphisms of finite rank}
\label{sect:rank}
A morphism $f \colon Y \to X$ is \emph{flat of rank $n$} if it is finite, and $f_*\Oc_Y$ is a locally free $\Oc_X$-module of rank $n$. Then:
\begin{enumerate}[(i)]
\item \label{rank:bc} Any base change of $f$ is flat of rank $n$.

\item \label{rank:flat} The morphism $f$ is faithfully flat, of relative dimension $0$.

\item \label{rank:deg2} The endomorphism $f_* \circ f^*$ of the group $\Zo(X)$ is multiplication by $n$ (see \cite[Example~1.7.4]{Ful-In-98}).
\end{enumerate}

\subsection{Sheaves of ideals}
\label{sect:ideals}
When $Z$ is a closed subscheme of a scheme $X$, we denote by $\Ic_Z$ the corresponding ideal of $\Oc_X$. Let $f\colon Y \to X$ be a morphism. If $\Ic$ is an ideal of $\Oc_X$, we denote by $\Ic \Oc_Y$ the image of the morphism of $\Oc_Y$-modules $f^*\Ic \to \Oc_Y$. Thus when $Z$ is a closed subscheme of $X$, we have $\Ic_Z \Oc_Y = \Ic_{f^{-1}Z}$. When $\Ic,\Jc$ are two ideals of $\Oc_X$, we have $(\Ic \Oc_Y) \cdot (\Jc \Oc_Y)=(\Ic \cdot \Jc)\Oc_Y$.

\section{The degree of a morphism}
\label{sect:degree}

\begin{definition}
\label{def:degree}
Let $f \colon Y \to X$ be a morphism, and $n$ an integer. We say that $f$ \emph{has degree $n$} and write $n=\deg f$, if we have $f_*[Y]=n\cdot[X]$ in $\Zo(X)$. More generally, when $f \colon Y \dasharrow X$ is a rational map, we say that $f$ \emph{has degree $n$} if it induces a morphism of degree $n$ on an open dense subscheme of $Y$.
\end{definition}

\begin{example}
\label{ex:degree}
Let $f\colon Y \dasharrow X$ be a rational map.
\begin{enumerate}[(i)]
\item \label{ex:deg:0} Assume that $f$ is birational, e.g.\ an open dense embedding. Then $f$ has degree one.

\item \label{ex:deg:1} Let $X$ be a scheme with irreducible components $X_\alpha$ and multiplicities $m_\alpha$. Let $Y=\amalg_\alpha X_\alpha'$, where $X_\alpha'$ is the disjoint union of $m_\alpha$ copies of $X_\alpha$, and $f \colon Y \to X$ the natural morphism. Then $f$ has degree one.

\item \label{ex:deg:2} Assume that $Y$ has pure dimension $> \dim X$. Then $f$ has degree zero.

\item \label{ex:deg:3} Assume that $X$ and $Y$ are integral of the same dimension, and that $f$ is dominant. Then $f$ has degree $[k(Y) : k(X)]$.

\item \label{ex:deg:4} Assume that $f=g\circ h$, and that $h$ has degree $n$. Then $f$ has degree $d$ if and only if $d$ is divisible by $n$ and $g$ has degree $d/n$.

\item \label{ex:deg:5} Assume that $f$ is a flat morphism of rank $n$ (see \S\ref{sect:rank}). Then $f$ has degree $n$ (this follows from \S\ref{sect:rank} \eqref{rank:deg2}).
\end{enumerate}
\end{example}

\begin{lemma}
\label{lemm:flat_pullback}
Consider a commutative square
\[ \xymatrix{
Y'\ar[r]^{f'} \ar[d]_y & X' \ar[d]^x \\ 
Y \ar[r]^f & X
}\]
and assume that $f$ has a degree. Then $f'$ has degree $\deg f$ under any of the following assumptions.
\begin{enumerate}[(i)]
\item \label{it:pullback_i} The square is cartesian, and the morphisms $x$ and $y$ are flat of the same constant relative dimension.

\item \label{it:pullback_ii} There is a closed subscheme $Z$ of $X$ such that $x$ induces an isomorphism off $Z$, and $x^{-1}Z$ is nowhere dense in $X'$ and $(f \circ y)^{-1}Z$ is nowhere dense in $Y'$.
\end{enumerate}
\end{lemma}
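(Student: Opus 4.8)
The plan is to work throughout at the level of the cycle group $\Zo$, and to exploit the defining relation $f_*[Y] = (\deg f)\cdot[X]$. For part \eqref{it:pullback_i}, the point is that flat pullback carries fundamental cycles to fundamental cycles, so that $x^*[X] = [X']$ and $y^*[Y] = [Y']$ (cf.\ \cite[Lemma~1.7.1]{Ful-In-98}), and that it commutes with the pushforward $f_*$ in a cartesian square, that is $x^*\circ f_* = f'_*\circ y^*$. Granting the latter, one simply computes $f'_*[Y'] = f'_*y^*[Y] = x^*f_*[Y] = (\deg f)\cdot x^*[X] = (\deg f)\cdot[X']$, which is exactly the assertion that $f'$ has degree $\deg f$.

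The compatibility $x^*f_* = f'_*y^*$ is the step I expect to require the most care, since the pushforward $f_*$ of \S\ref{sect:Zo} is the naive one, defined for all (not necessarily proper) morphisms through closures of images. I would verify it on a single integral closed subscheme $W\subseteq Y$. Writing $V$ for the closure of $f(W)$ in $X$, cartesianness identifies $y^{-1}W$ with the flat base change $W\times_V x^{-1}V$ of $W\to V$ along $x^{-1}V\to V$. When $\dim W = \dim V$ the map $W\to V$ is dominant and generically finite of degree $[k(W):k(V)]$, and this degree is preserved by flat base change over a dense open of each component of $x^{-1}V$; hence $f'_*[y^{-1}W] = [k(W):k(V)]\cdot[x^{-1}V] = x^*f_*[W]$. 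When $\dim W > \dim V$ both sides vanish: the right-hand side because $f_*[W]=0$, and the left-hand side because the relative dimension $d$ is common to both columns, forcing $\dim f'(y^{-1}W) < \dim y^{-1}W$.

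For part \eqref{it:pullback_ii} I would localise on the locus where $x$ is an isomorphism. Put $U = X\setminus Z$, $U' = x^{-1}U$ and $V' = f'^{-1}U'$; the two nowhere-density hypotheses say precisely that $U'\hookrightarrow X'$ and $V'\hookrightarrow Y'$ are dense open immersions, the latter because $(f\circ y)^{-1}Z = f'^{-1}(x^{-1}Z)$. Applying part \eqref{it:pullback_i} to the cartesian square whose vertical maps are the (flat, relative dimension zero) open immersions $f^{-1}U\hookrightarrow Y$ and $U\hookrightarrow X$, and whose bottom row $f$ already has a degree, gives $\deg(f|_{f^{-1}U}) = \deg f$. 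On the other side, restriction to an open subscheme commutes with $f'_*$, so for the open immersion $j\colon U'\hookrightarrow X'$ one has the cycle identity $j^*(f'_*[Y']) = (f'|_{V'})_*[V']$. Since $x$ restricts to an isomorphism $U'\xrightarrow{\sim}U$ compatibly with the commutative square, $f'|_{V'}$ is carried onto $f|_{f^{-1}U}$, so $(f'|_{V'})_*[V'] = (\deg f)\cdot[U']$ and hence $j^*(f'_*[Y']) = j^*\bigl((\deg f)\cdot[X']\bigr)$.

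It then remains to promote this equality over the dense open $U'$ to one over all of $X'$: the cycle $f'_*[Y']-(\deg f)\cdot[X']$ lies in the kernel of $j^*$, hence is supported on the nowhere-dense closed subset $x^{-1}Z$. I expect this promotion to be the main obstacle. The nowhere-density of $(f\circ y)^{-1}Z$ in $Y'$ ensures that no irreducible component of $Y'$ is mapped by $f'$ into $x^{-1}Z$, so every prime cycle appearing in $f'_*[Y']$ is the closure of a subvariety meeting $U'$, and is in particular not contained in $x^{-1}Z$; the same holds for the top-dimensional cycle $(\deg f)\cdot[X']$, by nowhere-density of $x^{-1}Z$ in $X'$. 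As a non-zero cycle supported on $x^{-1}Z$ must have some prime component contained in $x^{-1}Z$, the difference vanishes, which is the assertion that $f'$ has degree $\deg f$.
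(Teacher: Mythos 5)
Your part \eqref{it:pullback_i} is the paper's argument: the paper simply invokes the identity $x^*\circ f_* = f'_*\circ y^*$ on $\Zo$ (citing \cite[Proposition~49.20~(1)]{EKM}) together with $x^*[X]=[X']$ and $y^*[Y]=[Y']$, and your case-by-case verification of that identity is a correct unwinding of the cited fact.

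For part \eqref{it:pullback_ii} your strategy --- restrict over $U=X-Z$, apply \eqref{it:pullback_i} there, then use the two nowhere-density hypotheses to globalise --- is also the paper's; you merely replace its bookkeeping (composing with degree-one dense open immersions via \autoref{ex:degree}~\eqref{ex:deg:4}) by a direct support argument on the cycle $f'_*[Y']-\deg f\cdot[X']$, which is valid and if anything more transparent. The step you assert without justification, that ``$f'|_{V'}$ is carried onto $f|_{f^{-1}U}$'', is however a genuine gap: it amounts to saying that $y$ restricts to an isomorphism (or at least a degree-one morphism) $V'=(f\circ y)^{-1}U \to f^{-1}U$, and nothing in the stated hypotheses controls $y$ at all. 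Concretely, take $Z=\varnothing$, $x=\id$, $f=\id_{\Au}$ and $y=f'\colon \Au\to\Au$ a constant map: every hypothesis of \eqref{it:pullback_ii} holds, yet $\deg f'=0\neq 1=\deg f$. In fairness, the paper's own proof makes the identical silent identification (it declares $V=Y-f^{-1}Z\to Y'$ to be a dense open immersion ``by the assumption \eqref{it:pullback_ii}''), so the defect lies as much in the statement as in either proof: one needs the additional hypothesis that $y$ induces an isomorphism off $f^{-1}Z$ (or at least that $y$ restricted over $U$ has degree one). This extra condition does hold in every application in the paper, where $y$ is a composite of blow-ups whose centres lie over $Z$; your write-up should make that input explicit and show where it enters.
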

\begin{proof}
\eqref{it:pullback_i}: This follows immediately from the relation $x^* \circ f_*=f'_* \circ y^*$, as morphisms $\Zo(Y) \to \Zo(X')$ (see \cite[Proposition 49.20 (1)]{EKM}).

\eqref{it:pullback_ii}: Let $U=X -Z$ and $V=Y - f^{-1}Z$. The morphism $V \to U$ has degree $\deg f$ by \eqref{it:pullback_i}. By the assumption \eqref{it:pullback_ii}, we have a commutative diagram
\[ \xymatrix{
V\ar[r] \ar[d] & U \ar[d] \\ 
Y' \ar[r]^{f'} & X'
}\]
where vertical arrows are dense open immersions, and in particular have degree one. Using \autoref{ex:degree}~\eqref{ex:deg:4} twice, we deduce that $f'$ has degree $\deg f$.
\end{proof}

\begin{lemma}
\label{lemm:deg_prod}
For $i=1,2$, let $f_i\colon Y_i \to X_i$ be a morphism with a degree between equidimensional schemes. Then $f_1 \times f_2$ has degree $(\deg f_1) \cdot (\deg f_2)$.
\end{lemma}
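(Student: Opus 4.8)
The plan is to factor $f_1 \times f_2$ as a composite of two ``partial product'' morphisms, to recognise each of them as a flat pull-back of one of the $f_i$, and then to multiply the resulting degrees. Explicitly, writing $h = f_1 \times \id_{Y_2} \colon Y_1 \times Y_2 \to X_1 \times Y_2$ and $g = \id_{X_1} \times f_2 \colon X_1 \times Y_2 \to X_1 \times X_2$, I would use the factorisation
\[
f_1 \times f_2 = g \circ h.
\]

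The heart of the argument is to show that $h$ has degree $\deg f_1$ and $g$ has degree $\deg f_2$. First I would observe that the square whose horizontal maps are $h$ and $f_1$, and whose vertical maps are the first projections $Y_1 \times Y_2 \to Y_1$ and $X_1 \times Y_2 \to X_1$, is cartesian. Both these projections are obtained by base change from the structure morphism $Y_2 \to \Spec k$, hence are flat because every scheme is flat over the field $k$; and since $Y_2$ is equidimensional, they are flat of the same constant relative dimension $\dim Y_2$. Thus \autoref{lemm:flat_pullback}~\eqref{it:pullback_i} applies and shows that $h$ has degree $\deg f_1$. Symmetrically, viewing $g$ inside the cartesian square over $f_2$ whose vertical maps are the second projections $X_1 \times Y_2 \to Y_2$ and $X_1 \times X_2 \to X_2$ (flat of constant relative dimension $\dim X_1$, as $X_1$ is equidimensional), the same lemma gives that $g$ has degree $\deg f_2$. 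This verification is the only delicate point, and it is precisely here that the equidimensionality hypotheses are used: they are what guarantee that the projections have \emph{constant} relative dimension, so that flat pull-back of cycles is available.

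Finally I would conclude by the functoriality of push-forward on $\Zo$ established in \S\ref{sect:Zo} (equivalently, by \autoref{ex:degree}~\eqref{ex:deg:4}). Since $(f_1 \times f_2)_* = g_* \circ h_*$, the two degree computations above give
\[
(f_1 \times f_2)_*[Y_1 \times Y_2] = g_*\big(h_*[Y_1 \times Y_2]\big) = g_*\big((\deg f_1)\,[X_1 \times Y_2]\big) = (\deg f_1)(\deg f_2)\,[X_1 \times X_2],
\]
so that $f_1 \times f_2$ has degree $(\deg f_1)(\deg f_2)$. Presenting the conclusion as this single push-forward computation has the advantage of treating all cases uniformly, including the degenerate situation where one of the degrees vanishes, which \autoref{ex:degree}~\eqref{ex:deg:4} does not directly cover.
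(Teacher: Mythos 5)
Your proof is correct and follows essentially the same route as the paper: the same factorisation $f_1\times f_2=(\id_{X_1}\times f_2)\circ(f_1\times\id_{Y_2})$, the same appeal to \autoref{lemm:flat_pullback}~\eqref{it:pullback_i} for each factor, and the same conclusion by composing degrees. Your extra remarks (why the projections are flat of constant relative dimension, and the direct push-forward computation covering the case of vanishing degree) merely spell out details the paper leaves implicit.
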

\begin{proof}
The morphism $f_1 \times \id_{Y_2}$ (resp.\ $\id_{X_1} \times f_2$) has degree $\deg f_1$ (resp.\ $\deg f_2$) by \autoref{lemm:flat_pullback}~\eqref{it:pullback_i}. By \autoref{ex:degree}~\eqref{ex:deg:4}, it follows that the composite $f_1 \times f_2=(\id_{X_1} \times f_2) \circ (f_1 \times \id_{Y_2})$ has degree $(\deg f_1) \cdot (\deg f_2)$.
\end{proof}

\begin{lemma}
\label{lemm:deg_Cartier}
Consider a cartesian square
\[ \xymatrix{
E\ar[r] \ar[d]_g & Y \ar[d]^f \\ 
D \ar[r] & X
}\]
where horizontal arrows are effective Cartier divisors. Assume that $X$ and $Y$ are equidimensional, and that $f$ is proper. If $f$ has a degree, then $g$ has degree $\deg f$.
\end{lemma}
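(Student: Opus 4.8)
The plan is to prove the identity $g_*[E]=(\deg f)\cdot[D]$ in $\Zo(D)$, which is exactly the assertion that $g$ has degree $\deg f$. First I would fix the dimension bookkeeping. Write $d=\dim X$. Since $D$ and $E$ are effective Cartier divisors, they are locally cut out by nonzerodivisors and so contain no generic point of $X$, resp.\ $Y$; as $X$ and $Y$ are equidimensional, $D$ is equidimensional of dimension $d-1$ and $E$ of dimension $\dim Y-1$. In particular $[E]$ is pure of dimension $\dim Y-1$, so $g_*[E]$, being the proper pushforward of such a cycle, lies in $\Zo_{\dim Y-1}(D)$ (each integral component of $E$ either maps onto a subvariety of the same dimension or contributes $0$).

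In the main case $\dim Y=d$, I would invoke the compatibility of intersection with a Cartier divisor and proper pushforward, i.e.\ the projection formula of \cite[\S2.3]{Ful-In-98}. As the square is cartesian, the pullback Cartier divisor $f^*D$ equals $E$, and the formula reads $f_*\bigl(E\cdot[Y]\bigr)=D\cdot f_*[Y]$, an identity of refined intersection classes in $\CH_{d-1}(D)$. Because $E$ (resp.\ $D$) contains no component of $Y$ (resp.\ $X$), these refined classes are the fundamental cycles $E\cdot[Y]=[E]$ and $D\cdot[X]=[D]$. The left-hand side is the image of $[E]$ under the proper pushforward $\CH_{d-1}(E)\to\CH_{d-1}(D)$ induced by $g$, namely $g_*[E]$, since $f$ restricts to $g$ on $E$. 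Substituting $f_*[Y]=(\deg f)\cdot[X]$ and using $\Zz$-linearity of $D\cdot(-)$ yields $g_*[E]=(\deg f)\cdot[D]$ in $\CH_{d-1}(D)$. Now $d-1=\dim D$, and in the top dimension the canonical surjection $\Zo_{d-1}(D)\to\CH_{d-1}(D)$ is an isomorphism, there being no $d$-dimensional subvarieties of $D$ to carry rational equivalences. As $g_*[E]$ already lies in $\Zo_{d-1}(D)$ by the previous paragraph, this Chow-group identity lifts to the sought identity in $\Zo(D)$.

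The degenerate cases I would dispatch separately, noting that then $\deg f=0$ and it suffices to prove $g_*[E]=0$. If $\dim Y>d$, then $\deg f=0$ by \autoref{ex:degree}~\eqref{ex:deg:2}, while $g_*[E]\in\Zo_{\dim Y-1}(D)$ with $\dim Y-1>d-1=\dim D$, so $g_*[E]=0$. If $\dim Y<d$, then $f_*[Y]=0$ forces $\dim\overline{f(Y_\alpha)}<\dim Y_\alpha$ for every component $Y_\alpha$ of $Y$; a component $W$ of $E$ with $\dim\overline{g(W)}=\dim W$ would then satisfy $\overline{f(W)}=\overline{f(Y_\alpha)}\subseteq D$ for the component $Y_\alpha\supseteq W$, whence $Y_\alpha\subseteq f^{-1}D$, which is impossible because $E=f^{-1}D$ contains no component of $Y$. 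Hence every term of $g_*[E]$ vanishes.

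The hard part will be conceptual rather than computational: the projection formula produces an identity only modulo rational equivalence, whereas having a degree is a statement about fundamental cycles in $\Zo$. The bridge is the agreement $\CH_{\dim D}(D)=\Zo_{\dim D}(D)$ in the top dimension, and this is available precisely because the Cartier hypothesis forces $g_*[E]$ to be concentrated in dimension $\dim D$, with no spurious lower-dimensional terms. Controlling this concentration, and carrying out the parallel analysis in the degenerate dimensions, is where the hypotheses that $E$ and $D$ are effective Cartier divisors are genuinely used.
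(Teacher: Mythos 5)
Your proof is correct and follows essentially the same route as the paper's: the projection formula $f_*(E\cdot[Y])=D\cdot f_*[Y]$ together with $E\cdot[Y]=[E]$, $D\cdot[X]=[D]$, and the injectivity of $\Zo_{\dim D}(D)\to\CH_{\dim D}(D)$ in the top dimension, with the degenerate dimensions handled by a direct dimension count. The only cosmetic difference is the case division: the paper treats only $\dim X>\dim Y$ separately (arguing via the scheme-theoretic image of $f$ rather than component-by-component on $E$) and absorbs $\dim X<\dim Y$ into the main Chow-group computation, whereas you split into three cases; the substance is identical.
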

\begin{proof}
Let $n=\dim Y$, and $Z$ be the scheme theoretic image of $f$. For every $n$-dimensional irreducible component $V$ of $Z$, the cycle $f_*[Y] -[V] \in \Zo(X)$ is effective. Assume that $\dim X >n$. Then if $f$ has a degree, it must be zero, hence $f_*[Y]=0$. Thus there is no $V$ as above, which means that $\dim Z< n$. Since $E=f^{-1}D$ contains no generic point of $Y$, it follows that $D$ contains no generic point of $Z$. Thus $\dim D \cap Z < \dim Z$. Since $E$ has pure dimension $n-1> \dim D \cap Z$, the morphism $E \to D \cap Z$ has degree zero (\autoref{ex:degree}~\eqref{ex:deg:2}), hence the same is true for the morphism $g \colon E \to D \cap Z \to D$, and the statement is proved. Therefore we may assume that $\dim X \leq n$. We have in $\CH_{n-1}(D)$ the relation (see \cite[Definition~2.3, Proposition~2.3 (c), Lemma~1.7.2]{Ful-In-98})
\[
g_* [E] = g_* (E\cdot [Y]) = D \cdot f_*[Y] = D\cdot (\deg f \cdot [X]) = \deg f \cdot [D].
\]
Since $\dim D =\dim X-1\leq n-1$, the morphism $\Zo_{n-1}(D) \to \CH_{n-1}(D)$ is injective, proving the statement.
\end{proof}

\section{Involutions of varieties}
\label{sect:G}
\numberwithin{theorem}{subsection}
\numberwithin{lemma}{subsection}
\numberwithin{proposition}{subsection}
\numberwithin{corollary}{subsection}
\numberwithin{example}{subsection}
\numberwithin{notation}{subsection}
\numberwithin{definition}{subsection}
\numberwithin{remark}{subsection}

A $k$-involution of a variety is called a \emph{$G$-action} (we think of $G$ as the group with two elements). A \emph{$G$-variety} is a variety equipped with a $G$-action, and a \emph{$G$-morphism} is a morphism of varieties compatible with the involutions. A closed or open subscheme of a $G$-variety is a \emph{$G$-subscheme} if it coincides with its inverse image under the involution. When $Y$ and $X$ are two $G$-varieties, then $Y \times X$ is naturally a $G$-variety, via the component-wise $G$-action.

\subsection{\texorpdfstring{$G$}{G}-quotients}\mbox{}
\label{sect:Gquotients}
Let $Y$ be a $G$-variety, $\tau$ its involution, and $y$ a point of $Y$. Since $Y$ is quasi-projective over $k$, we can find an affine open subscheme $U$ of $Y$ containing both $y$ and $\tau(y)$. Then $U \cap \tau^{-1}U$ is an affine open $G$-subscheme of $Y$ containing $y$. Thus:
\begin{proposition}{\cite[V, Proposition 1.8, Corollaire 1.5]{sga1}}
\label{prop:Gquotient}
Let $Y$ be a $G$-variety. The coequaliser of the involution and the identity of $Y$ exists in the category of schemes (separated and of finite type over $k$), and is represented by a finite surjective morphism $\varphi_Y \colon Y \to Y/G$.
\end{proposition}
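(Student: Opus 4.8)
The plan is to construct $Y/G$ by gluing together affine quotients, reducing the whole statement to invariant theory for an involution of a finitely generated algebra. The excerpt already produces, around each point of $Y$, an affine open $G$-subscheme; hence $Y$ is covered by affine open $G$-subschemes $U_i = \Spec A_i$, each carrying the involution $\sigma_i$ of $A_i$ induced by $\tau$. I would first settle the affine case, and then glue.

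In the affine case, write $Y = \Spec A$ with involution $\sigma$ and let $A\inv = \{a \in A : \sigma(a) = a\}$. Each $a \in A$ is a root of the monic polynomial $T^2 - (a + \sigma(a))\,T + a\,\sigma(a)$, whose coefficients lie in $A\inv$; thus $A$ is integral over $A\inv$. Being in addition a finitely generated $k$-algebra, $A$ is a finite $A\inv$-module, and the Artin--Tate lemma shows that $A\inv$ is a finitely generated $k$-algebra. Therefore $\varphi\colon \Spec A \to \Spec A\inv$ is a finite surjective morphism of varieties, surjectivity coming from lying-over. For the universal property, I would use that, $G$ being finite and $A$ integral over $A\inv$, the fibres of $\varphi$ are exactly the $G$-orbits; as $\varphi$ is closed and surjective it is a topological quotient. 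Moreover, for $f \in A\inv$ one has $\varphi^{-1}(D(f)) = D(f)$ and $(A_f)\inv = (A\inv)_f$, so that $\Oc_{\Spec A\inv} = (\varphi_* \Oc_{\Spec A})\inv$. Given any $G$-invariant morphism $g\colon \Spec A \to Z$ to a scheme $Z$, invariance makes $g$ constant on orbits, so its underlying map factors uniquely and continuously through $\Spec A\inv$; the comorphisms $g^\#$ take values in the $\sigma$-invariants, hence factor through $(\varphi_*\Oc_{\Spec A})\inv = \Oc_{\Spec A\inv}$. This produces the unique factorisation $g = h \circ \varphi$, so $\Spec A\inv$ is the coequaliser of $\sigma$ and $\id$ in all schemes.

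For the gluing, the key observation is that if $W \subseteq U_i$ is a $G$-stable open, then its image $\varphi_i(W)$ is open, since its complement $\varphi_i(U_i \setminus W)$ is closed ($\varphi_i$ being closed and $U_i \setminus W$ being $G$-stable), and $\varphi_i$ restricts to the quotient $W \to \varphi_i(W)$. Applying this to $W = U_i \cap U_j$ gives canonical open immersions of $(U_i \cap U_j)/G$ into both $U_i/G$ and $U_j/G$, and the affine universal property forces the cocycle condition on triple overlaps. Hence the $U_i/G$ glue to a scheme $Y/G$ together with a morphism $\varphi_Y$ restricting to each $\varphi_i$. Finiteness and surjectivity of $\varphi_Y$ are local on the target, so they are inherited from the $\varphi_i$; finite type over $k$ is immediate from the cover; and separatedness follows from the fact that $\varphi_Y$ is finite, hence affine and separated, together with separatedness of $Y$. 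The global universal property is then assembled from the affine one by uniqueness over the overlaps.

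\textbf{Main obstacle.} I expect the delicate point to be the affine universal property, namely that a $G$-invariant morphism into an \emph{arbitrary} (possibly non-affine) scheme factors through $\Spec A\inv$. This rests on two facts that must be verified carefully: that $\varphi$ is an orbit-separating topological quotient, and that forming $G$-invariants commutes with localisation at invariant elements, giving $\Oc_{\Spec A\inv} = (\varphi_*\Oc_{\Spec A})\inv$. The remaining subtlety is confirming that the glued quotient $Y/G$ is separated.
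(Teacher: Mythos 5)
Your proposal is correct and follows the same route as the paper: the paper's entire argument is the observation that quasi-projectivity lets one cover $Y$ by affine open $G$-subschemes (via $U\cap\tau^{-1}U$), after which it simply cites \cite[V, Proposition 1.8, Corollaire 1.5]{sga1} for the affine invariant-ring construction and the gluing. What you have written is essentially that cited proof spelled out (integrality over $A\inv$, Artin--Tate, fibres $=$ orbits, localisation at invariant elements, gluing over $G$-stable overlaps), and the delicate points you flag --- the universal property against non-affine targets and separatedness of the glued quotient, the latter really needing ``fibres are orbits'' plus closedness of the graph of $\tau$ rather than separatedness of $\varphi_Y$ alone --- are exactly the content of that reference.
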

A $G$-morphism of varieties $f\colon Y\to X$ induces a morphism $f/G \colon Y/G \to X/G$.

\subsection{Fixed locus}
\label{sect:fixed}
\begin{definition}
Let $Y$ be a $G$-variety. The equaliser of the identity and the involution of $Y$ in the category of varieties is a closed embedding $Y\inv \to Y$, called the \emph{fixed locus}. It can be constructed as the fibre product of the graph of the involution $Y \to Y \times Y$ and the diagonal $Y \to Y \times Y$.

If $f\colon Y \to X$ is a $G$-morphism, then $Y\inv \subset f^{-1}(X\inv)$ as closed subschemes of $Y$. We denote by $f\inv \colon Y\inv \to X\inv$ the induced morphism. Note that $f\inv$ is proper as soon as $f$ is so.
\end{definition}
\begin{remark}
\label{rem:product}
Let $Y$ and $X$ be two $G$-variety. Then $(Y\times X)\inv = Y\inv \times X\inv$.
\end{remark}
\begin{example}
\label{ex:product}
Let $X$ be a variety. The variety $X \times X$ has a natural $G$-action, given by the exchange of factors. Then the fixed locus $(X \times X)\inv$ is the diagonal $X$ in $X\times X$. If $f\colon Y \to X$ is a morphism of varieties, then $f\times f \colon Y \times Y \to X \times X$ is a $G$-morphism, and $(f \times f)\inv =f$.
\end{example}

\begin{definition}
We say that a $G$-morphism $f\colon Y \to X$ is \emph{FPR} (fixed-point reflecting) if $f^{-1}(X\inv) = Y\inv$, as closed subschemes of $Y$.
\end{definition}

\begin{remark}
\label{rem:fixed}
The embedding of a $G$-subscheme is FPR.
\end{remark}

\begin{definition}
\label{def:coinv}
Let $Y$ be a $G$-variety, with quotient morphism $\varphi_Y\colon Y \to Y/G$. The $G$-action on $Y$ induces an involution $\sigma_Y$ of the $\Oc_{Y/G}$-module ${\varphi_Y}_*\Oc_Y$. The kernel of the endomorphism $\id -\sigma_Y$ of ${\varphi_Y}_*\Oc_Y$ is $\Oc_{Y/G}$ (see \cite[V, Corollaire~1.2]{sga1}), and we denote by $\Lg{Y}$ its image. Thus $\Lg{Y}$ is a coherent $\Oc_{Y/G}$-submodule of ${\varphi_Y}_*\Oc_Y$, and we have an exact sequence of $\Oc_{Y/G}$-modules
\begin{equation}
\label{seq:l}
0 \to \Oc_{Y/G} \to {\varphi_Y}_*\Oc_Y \to \Lg{Y} \to 0.
\end{equation}

If $f\colon Y \to X$ is a $G$-morphism, so is $Y \to (Y/G) \times_{X/G} X$, and therefore there is a unique morphism $\lambda_f$ fitting into the commutative diagram of $\Oc_{Y/G}$-modules
\begin{equation}
\label{diag:functorial}
\begin{gathered}
\xymatrix{
(f/G)^* {\varphi_X}_* \Oc_X \ar[d] \ar@{->>}[r] & (f/G)^*\Lg{X}\ar[d]_{\lambda_f} \ar[r] & (f/G)^*{\varphi_X}_* \Oc_X \ar[d]\\
{\varphi_Y}_*\Oc_Y\ar[r] & \Lg{Y}\ar@{^{(}->}[r] & {\varphi_Y}_*\Oc_Y
}
\end{gathered}
\end{equation}

\end{definition}
\begin{remark}
\label{rem:Lg_open}
When $Y$ is a $G$-variety, and $u \colon U \to Y$ the open immersion of a $G$-subscheme, then the morphism $\lambda_u \colon (u/G)^*\Lg{Y} \to \Lg{U}$ is an isomorphism. 
\end{remark}

\begin{proposition}
\label{lemm:altdef_fixed}
Let $Y$ be a $G$-variety. We have an exact sequence of $\Oc_Y$-modules
\[
\varphi_Y^*\Lg{Y} \to \Oc_Y \to \Oc_{Y\inv} \to 0,
\]
where the map on the left is adjoint to the inclusion $\Lg{Y} \to {\varphi_Y}_* \Oc_Y$.
\end{proposition}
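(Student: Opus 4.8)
The plan is to check the assertion locally on $Y$, reducing it to an elementary computation with the ideal of the fixed locus.

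First I would reduce to the case of an affine $G$-subscheme. As recalled at the beginning of \S\ref{sect:Gquotients}, every point of $Y$ lies in an affine open $G$-subscheme $U=\Spec A$; there the involution corresponds to an involution $\sigma$ of the $k$-algebra $A$ (the involution denoted $\sigma_Y$ in \autoref{def:coinv}), the fixed subring $A^G=\ker(\id-\sigma)$ is the coordinate ring of $U/G$, the morphism $\varphi_U$ is the inclusion $A^G\hookrightarrow A$, and $A={\varphi_U}_*\Oc_U$ carries $\sigma$ as an $A^G$-linear involution. Since the three terms of the sequence, together with its two maps, are compatible with restriction to such a $U$ — using \autoref{rem:fixed} for $\Oc_{Y\inv}$ and \autoref{rem:Lg_open} for $\varphi_Y^*\Lg{Y}$ — and since exactness of a sequence of $\Oc_Y$-modules is local, it suffices to treat the affine case.

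Next I would describe the left-hand map explicitly. By \autoref{def:coinv} and the sequence \eqref{seq:l}, the submodule $\Lg{U}\subseteq A$ is the image of $\id-\sigma$, namely $\{a-\sigma(a):a\in A\}$. The map $\varphi_U^*\Lg{U}\to\Oc_U$ adjoint to the inclusion $\Lg{U}\hookrightarrow A$ is the counit $A\otimes_{A^G}\Lg{U}\to A$, given on sections by $a\otimes m\mapsto am$ (viewing $m\in\Lg{U}\subseteq A$). Its image is therefore the ideal $I\subseteq A$ generated by the anti-invariant elements $a-\sigma(a)$, so that the cokernel of the left-hand map is $A/I$.

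It then remains to identify $A/I$ with $\Oc_{U\inv}$. The fixed locus is the equaliser of $\id$ and $\sigma$, equivalently the pullback of the graph $\Gamma_\tau\subseteq U\times U$ along the diagonal $\Delta\colon U\to U\times U$ (a closed immersion, as $U$ is separated). The ideal of $\Gamma_\tau$ in $A\otimes_k A$ is generated by the elements $a\otimes 1-1\otimes\sigma(a)$, which $\Delta^*$ carries to $a-\sigma(a)$; hence the ideal of $U\inv$ is exactly $I$ and $\Oc_{U\inv}=A/I$, giving the claimed exactness. The computation is uniform in the characteristic: neither $\Lg{U}=(\id-\sigma)A$ nor the fixed-locus ideal $I=(a-\sigma(a))$ involves dividing by $2$, so the argument applies in characteristic two. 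The only real care needed — and the main, if modest, obstacle — is the bookkeeping of the reduction, i.e.\ verifying that restriction to $U$ turns the global sequence into the corresponding local one, which is precisely what the compatibilities of \autoref{rem:fixed} and \autoref{rem:Lg_open} provide.
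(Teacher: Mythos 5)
Your proof is correct, but it takes a genuinely different route from the paper's. The paper stays global and categorical: it observes that $Y\inv \to Y$ is also the equaliser of $\id$ and the involution in the category of affine $Y/G$-schemes, dualises this to say that ${\varphi_Y}_*\Oc_{Y\inv}$ is the coequaliser in quasi-coherent $\Oc_{Y/G}$-algebras — i.e.\ the quotient of ${\varphi_Y}_*\Oc_Y$ by the ideal generated by $\im(\id-\sigma_Y)=\Lg{Y}$, which is the image of ${\varphi_Y}_*\varphi_Y^*\Lg{Y}\to{\varphi_Y}_*\Oc_Y$ — and then strips off ${\varphi_Y}_*$ using exactness of pushforward along the affine morphism $\varphi_Y$. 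You instead localise to an affine open $G$-subscheme $U=\Spec A$ and compute the fixed-locus ideal explicitly from the paper's fibre-product description (graph pulled back along the diagonal), finding it is generated by the elements $a-\sigma(a)$, which you then match with the image of the counit $A\otimes_{A^G}\Lg{U}\to A$. Both arguments hinge on the same intermediate fact — that $\Ic_{Y\inv}$ is the ideal generated by $\Lg{Y}$ — but justify it differently; yours is more elementary and makes the characteristic-independence visible, at the cost of the restriction bookkeeping (note that for the compatibility of the adjoint maps under restriction you implicitly also need \autoref{lemm:lambda_f} applied to the open immersion, not only \autoref{rem:Lg_open}), while the paper's avoids any localisation by invoking the universal property directly. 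Your identification of the ideal of the graph as generated by $a\otimes 1-1\otimes\sigma(a)$, and its image $a-\sigma(a)$ under the diagonal, checks out.
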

\begin{proof}
Let us write $\varphi$ for $\varphi_Y$. The morphism $Y\inv \to Y$ is also the equaliser of the identity and the involution of $Y$ in the category of affine $Y/G$-schemes. Therefore the morphism $\varphi_*\Oc_Y \to \varphi_*\Oc_{Y\inv}$ is the coequaliser of the identity and the involution of $\varphi_*\Oc_Y$ in the category of quasi-coherent $\Oc_{Y/G}$-algebras. This means that the $\Oc_{Y/G}$-algebra $\varphi_*\Oc_{Y\inv}$ is the quotient of $\varphi_*\Oc_Y$ by the ideal generated by $\im(\id-\sigma_Y)=\Lg{Y}$, that is by the image of the morphism $\varphi_* \varphi^* \Lg{Y} \to \varphi_* \Oc_Y$. Thus
\[
\varphi_*\Oc_{Y\inv} = \coker(\varphi_* \varphi^* \Lg{Y} \to \varphi_* \Oc_Y) = \varphi_* \coker(\varphi^* \Lg{Y} \to \Oc_Y)
\]
($\varphi$ is affine), and therefore $\Oc_{Y\inv} = \coker(\varphi^* \Lg{Y} \to \Oc_Y)$.
\end{proof}

\subsection{Pure actions}
\label{sect:pure}

\begin{definition}
We say that a $G$-variety $Y$ is \emph{pure} if $Y\inv \to Y$ is an effective Cartier divisor.
\end{definition}

\begin{remark}
\label{rem:pure_open}
An open $G$-subscheme of a pure $G$-variety is a pure $G$-variety (this follows from \autoref{rem:fixed}).
\end{remark}

\begin{lemma}
\label{lemm:lambda_f}
Let $f\colon Y \to X$ be a $G$-morphism. Then the following diagram of $\Oc_Y$-modules commutes (we use the notation of \eqref{diag:functorial}).
\[ \xymatrix{
f^* \varphi_X^*\Lg{X}\ar[r] \ar[d]& f^*\Oc_X \ar[dd]\\
\varphi_Y^*(f/G)^*\Lg{X} \ar[d]_{\varphi_Y^*(\lambda_f)} &\\
\varphi_Y^*\Lg{Y}  \ar[r] & \Oc_Y
}\]
\end{lemma}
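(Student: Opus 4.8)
The plan is to unwind the two ways around the square into a pair of maps $f^*\varphi_X^*\Lg{X}\to\Oc_Y$ and to identify them through the adjunction $\varphi_Y^*\dashv {\varphi_Y}_*$. By \autoref{lemm:altdef_fixed} the two horizontal arrows $a\colon\varphi_X^*\Lg{X}\to\Oc_X$ and $b\colon\varphi_Y^*\Lg{Y}\to\Oc_Y$ are the adjoints of the inclusions $\mathrm{incl}_X\colon\Lg{X}\hookrightarrow{\varphi_X}_*\Oc_X$ and $\mathrm{incl}_Y\colon\Lg{Y}\hookrightarrow{\varphi_Y}_*\Oc_Y$; explicitly, each is the relevant counit $\epsilon\colon\varphi^*\varphi_*\Oc\to\Oc$ precomposed with $\varphi^*$ of the inclusion. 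Writing $\iota\colon f^*\Oc_X\to\Oc_Y$ for the canonical map and $c\colon f^*\varphi_X^*\Lg{X}\xrightarrow{\sim}\varphi_Y^*(f/G)^*\Lg{X}$ for the canonical isomorphism arising from the equality $\varphi_X\circ f=(f/G)\circ\varphi_Y$, the assertion of the lemma is the single identity $\iota\circ f^*(a)=b\circ\varphi_Y^*(\lambda_f)\circ c$.

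First I would treat the right-hand side. Expanding $b$ as a counit composite gives $b\circ\varphi_Y^*(\lambda_f)=\epsilon^Y\circ\varphi_Y^*(\mathrm{incl}_Y\circ\lambda_f)$. The crucial input is the commuting right-hand square of \eqref{diag:functorial}, which is the very definition of $\lambda_f$: it reads $\mathrm{incl}_Y\circ\lambda_f=\nu\circ(f/G)^*(\mathrm{incl}_X)$, where $\nu\colon(f/G)^*{\varphi_X}_*\Oc_X\to{\varphi_Y}_*\Oc_Y$ is the vertical arrow of that diagram, namely ${\varphi_Y}_*(\iota)$ composed with the base change map $\beta\colon(f/G)^*{\varphi_X}_*\Oc_X\to{\varphi_Y}_*f^*\Oc_X$. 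Using the naturality of $c$ in the sheaf variable, both sides of the desired identity then acquire a common right factor $f^*\varphi_X^*(\mathrm{incl}_X)$, so it remains to establish $\iota\circ f^*(\epsilon^X)=\epsilon^Y\circ\varphi_Y^*(\nu)\circ c''$, where $\epsilon^X$, $\epsilon^Y$ are the two counits and $c''\colon f^*\varphi_X^*{\varphi_X}_*\Oc_X\xrightarrow{\sim}\varphi_Y^*(f/G)^*{\varphi_X}_*\Oc_X$ is the canonical isomorphism for the sheaf ${\varphi_X}_*\Oc_X$.

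This residual identity no longer involves the sheaves $\Lg{X}$ and $\Lg{Y}$: it is a general compatibility between counits and base change. Substituting $\nu={\varphi_Y}_*(\iota)\circ\beta$ and using the naturality of $\epsilon^Y$ to move $\iota$ to the front, it reduces to $f^*(\epsilon^X)=\epsilon^Y\circ\varphi_Y^*(\beta)\circ c''$, which is exactly the characterisation of the base change transformation $\beta$ as the mate of the counit $\epsilon^X$ under the adjunctions $\varphi_Y^*\dashv{\varphi_Y}_*$ and $\varphi_X^*\dashv{\varphi_X}_*$. This closes the argument.

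The step I expect to cost the most care is the bookkeeping of the canonical isomorphisms $c$, $c''$ coming from $\varphi_X\circ f=(f/G)\circ\varphi_Y$ together with the identification of the vertical arrow $\nu$ of \eqref{diag:functorial} as ${\varphi_Y}_*(\iota)\circ\beta$; once these are pinned down the remaining compatibility is purely formal. Since $\varphi_X$ and $\varphi_Y$ are affine, an alternative and more transparent route is to check the square on an affine chart $\Spec B'\to\Spec B$ lying over $\Spec A'\to\Spec A$, with involutions $\sigma$ of $B$ and $\sigma'$ of $B'$. There $\Lg{X}$, $\Oc_X$, $\Lg{Y}$ become $L=\im(\id-\sigma)$, $B$, $L'=\im(\id-\sigma')$, the relevant modules over $Y$ are $L\otimes_A B'$, $B'$ and $L'\otimes_{A'}B'$, and a direct element chase — using that $f$ being a $G$-morphism forces $B\to B'$ to commute with $\sigma$ and $\sigma'$, so that $\lambda_f$ sends $\ell\otimes a'$ to the image of $\ell$ times $a'$ — shows that both composites carry $\ell\otimes b'$ to $\bar\ell\, b'$, where $\bar\ell$ denotes the image of $\ell$ in $B'$.
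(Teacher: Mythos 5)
Your argument is correct, and it shares its one essential move with the paper: the only non-formal input is the rightmost square of \eqref{diag:functorial}, which trades $\lambda_f$ for the canonical map $(f/G)^*{\varphi_X}_*\Oc_X \to {\varphi_Y}_*\Oc_Y$ and reduces the lemma to a compatibility involving only structure sheaves. Where you differ is in how that residual compatibility is discharged. The paper does it in one line by passing through the equivalence between quasi-coherent $\Oc_Y$-algebras and affine $Y$-schemes: the dual diagram of fibre products ($Y \to Y\times_{Y/G} Y \to Y\times_{Y/G}(Y/G)\times_{X/G}X$ versus $Y \to Y\times_X X \to Y\times_X X\times_{X/G}X$) is visibly commutative, so no adjunction bookkeeping is needed. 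You instead unwind the horizontal arrows as counits via \autoref{lemm:altdef_fixed}, factor the vertical arrow of \eqref{diag:functorial} as $\nu={\varphi_Y}_*(\iota)\circ\beta$, and invoke the mate characterisation $f^*(\epsilon^X)=\epsilon^Y\circ\varphi_Y^*(\beta)\circ c''$ of the base change transformation; this is the same formal content written in the language of units and counits, at the cost of exactly the bookkeeping you flag (which you handle correctly). Your affine fallback is also sound and is in effect what the paper's scheme-level diagram encodes after taking coordinate rings. Nothing is missing.
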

\begin{proof}
In view of the equivalence of categories between quasi-coherent $\Oc_Y$-algebras and affine $Y$-schemes, the commutativity of the two diagrams below is equivalent.
\[ \xymatrix{
f^* \varphi_X^*{\varphi_X}_*\Oc_X\ar[r] \ar[d]& f^*\Oc_X \ar[dd]&&Y\times_X X \times_{X/G} X& Y \times_X X \ar[l]\\
\varphi_Y^*(f/G)^*{\varphi_X}_*\Oc_X \ar[d] & && Y\times_{Y/G} (Y/G)\times_{X/G} X \ar[u]&\\
\varphi_Y^*{\varphi_Y}_*\Oc_Y  \ar[r] & \Oc_Y && Y \times_{Y/G} Y  \ar[u] & Y\ar[l] \ar[uu]
}\]
The diagram on the right is obviously commutative. The commutativity of the other diagram yields the result, in view of the rightmost square of \eqref{diag:functorial}.
\end{proof}

\begin{proposition}
\label{prop:def_pure}
Let $Y$ be a pure $G$-variety.
\begin{enumerate}[(i)]
\item \label{prop:def_pure:flat} The morphism $\varphi_Y \colon Y \to Y/G$ is flat of rank two.
\item \label{prop:def_pure:line} The $\Oc_{Y/G}$-module $\Lg{Y}$ is locally free of rank one.
\item \label{prop:def_pure:seq} The sequence of $\Oc_Y$-modules below is exact (see \autoref{lemm:altdef_fixed})
\[
0 \to \varphi_Y^*\Lg{Y} \to \Oc_{Y} \to \Oc_{Y\inv} \to 0.
\]
\end{enumerate}
\end{proposition}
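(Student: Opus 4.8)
The plan is to pass to an affine chart and reduce all three statements to the single claim that $\Lg{Y}$ is invertible. Write $Y/G=\Spec A$ and $Y=\Spec B$, so that $B$ is a finite $A$-algebra with involution $\sigma=\sigma_Y$ and $A=\ker(\id-\sigma)$; then $\Lg{Y}$ becomes $L:=\im(\id-\sigma)\subseteq B$, and \eqref{seq:l} is the exact sequence $0\to A\to B\to L\to 0$ of $A$-modules. By \autoref{lemm:altdef_fixed} the multiplication map $m\colon L\otimes_A B\to B$, $\ell\otimes b\mapsto\ell b$, has image the ideal $J:=LB=\Ic_{Y\inv}$, and purity is exactly the assertion that $J$ is an invertible $\Oc_Y$-module. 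I will first prove that $L$ is invertible over $A$, and then deduce flatness and exactness from this.

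The invertibility of $L$ is the heart of the matter, and the only place purity is used. As the claim is local on $\Spec A$, I may assume $B$ is semilocal; then the invertible ideal $J$ is free, and since $L$ generates $J$ while $J/\operatorname{rad}(B)J$ is free of rank one over the semisimple ring $B/\operatorname{rad}(B)$, some element $\ell\in L$ reduces to a basis of it, hence generates $J$ by Nakayama's lemma (here one uses that $B\to B/\operatorname{rad}(B)$ is surjective). Being a generator of an invertible ideal, $\ell$ is a non-zero-divisor. Now take $m\in L$ and write $m=\ell b$ with $b\in B$, which is possible because $L\subseteq J=\ell B$. Every element of $L=\im(\id-\sigma)$ is anti-invariant, so $\sigma(\ell)=-\ell$; applying $\sigma$ to $m=\ell b$ and comparing with $\sigma(m)=-m$ gives $\ell\bigl(\sigma(b)-b\bigr)=0$, and since $\ell$ is a non-zero-divisor this forces $b\in A$. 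Hence $L=A\ell$, and as $\ell$ is a non-zero-divisor the multiplication $A\to L$ is an isomorphism. Thus $L$ is free of rank one, so $\Lg{Y}$ is invertible.

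Granting this, flatness of $\varphi_Y$ follows from \eqref{seq:l}: it presents ${\varphi_Y}_*\Oc_Y$ as an extension of the line bundle $\Lg{Y}$ by $\Oc_{Y/G}$, hence as a locally free module of rank two, which is the assertion that $\varphi_Y$ is flat of rank two (see \S\ref{sect:rank}). For the last exact sequence, note that once $\Lg{Y}$ is invertible so is its pullback $\varphi_Y^*\Lg{Y}$, and \autoref{lemm:altdef_fixed} exhibits the map $\varphi_Y^*\Lg{Y}\to\Oc_Y$ as a surjection $\varphi_Y^*\Lg{Y}\twoheadrightarrow\Ic_{Y\inv}$ of line bundles on $Y$. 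Any surjection of invertible sheaves is an isomorphism, so $\varphi_Y^*\Lg{Y}\to\Oc_Y$ is injective with image $\Ic_{Y\inv}$, which is precisely the required exactness.

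The main obstacle is the extraction of the non-zero-divisor $\ell\in L$ generating $J$. Its non-zero-divisor property is what makes the cancellation $\ell(\sigma(b)-b)=0\Rightarrow b\in A$ legitimate without inverting $2$: in characteristic different from two one could instead split $B=A\oplus L$ by means of the idempotents $\tfrac12(\id\pm\sigma)$, but the argument above deliberately avoids this so as to remain valid in all characteristics. The only genuinely technical point is that a single generator of $J$ can be chosen inside $L$ after localisation, which rests on $L$ spanning the rank-one module $J/\operatorname{rad}(B)J$ together with the surjectivity of $B\to B/\operatorname{rad}(B)$.
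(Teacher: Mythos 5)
Your route is genuinely different from the paper's. The paper proves (ii) first by a global argument: the surjection $\varphi_Y^*\Lg{Y}\to\Ic_{Y\inv}$ onto a line bundle defines a $Y/G$-morphism $Y\to\Proj(\Sym(\Lg{Y}))$, which is shown to be invariant under the involution and hence to descend to a section of $\Proj(\Sym(\Lg{Y}))\to Y/G$, i.e.\ to a surjection of $\Lg{Y}$ onto a line bundle, finally checked to be injective. You instead work affine- and semilocally and exhibit an explicit anti-invariant non-zero-divisor generating $\Ic_{Y\inv}$. Your deductions of (i) and (iii) from (ii) are correct and essentially identical to the paper's, and the cancellation $\ell(b-\sigma(b))=0\Rightarrow b\in A$, giving $L=A\ell\simeq A$, is a nice elementary substitute for the descent argument.

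There is, however, a gap at exactly the step you flag as the technical heart. The principle you invoke --- a subset of $B$ that generates the invertible ideal $J$, hence spans the rank-one free module $J/\operatorname{rad}(B)J$ over $B/\operatorname{rad}(B)$, must contain an element reducing to a basis --- is false in general, because the image of $L$ is only an $A$-submodule (an additive subgroup), not a $B/\operatorname{rad}(B)$-submodule. For example the kernel of the sum map $\mathbb{F}_2^{3}\to\mathbb{F}_2$ generates $\mathbb{F}_2^{3}$ over itself and has nonzero projection to each factor, yet contains no unit; so a semilocal ring with three or more maximal ideals and finite residue fields defeats the argument as stated. What rescues it is structure you did not use: since $A=B^G$ and $B$ is finite over $A$, the group $G$ acts transitively on the fibres of $\Spec B\to\Spec A$, so after localising at a prime of $A$ the ring $B$ has at most \emph{two} maximal ideals. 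With one maximal ideal the claim is immediate from Nakayama; with two, writing $J/\operatorname{rad}(B)J\simeq k_1\times k_2$, if $v,w\in L$ have nonzero first, resp.\ second, component and neither is already a unit, then $v+w$ is one. (Alternatively, when $\sigma$ swaps the two maximal ideals one has $J=B$ and $\ell=b-\sigma(b)$ is a unit for any $b\in\mathfrak{m}_1\setminus\mathfrak{m}_2$.) With this supplement --- or by first reducing to infinite residue fields --- your proof is complete.
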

\begin{proof}
Let us write $\varphi$ for $\varphi_Y$ and $\Lg{}$ for $\Lg{Y}$ and $\Ic$ for $\Ic_{Y \inv}$. Consider the symmetric $\Oc_{Y/G}$-algebra $\Sym(\Lg{})$ on the module $\Lg{}$, and the variety $P= \Proj(\Sym(\Lg{}))$. We use the description of the set of $Y/G$-morphisms into $P$ given in \cite[(4.2.3)]{ega-2}. Since the morphism of $\Oc_Y$-modules $a\colon \varphi^*\Lg{} \to \Ic$ of \autoref{lemm:altdef_fixed} is surjective and its target is locally free of rank one, there is an induced morphism $\alpha \colon Y \to P$ over $Y/G$. Let $\tau$ be the involution of $Y$. The morphism $\alpha \circ \tau \colon Y \to P$ corresponds to the surjective morphism of $\Oc_Y$-modules
\[
a'\colon \varphi^*\Lg{} \simeq \tau^*\varphi^* \Lg{} \xrightarrow{\tau^*(a)} \tau^*\Ic.
\]
The $G$-morphism $\tau \colon Y \to Y$ is such that $\tau/G=\id_{Y/G}$ and $\lambda_\tau = -\id_{\Lg{}}$ (because $\sigma_Y$ acts as $-\id$ on $\Lg{} = \im(\id -\sigma_Y) \subset \ker(\id + \sigma_Y)$). Applying \autoref{lemm:lambda_f} to the $G$-morphism $\tau$, we see that the diagram with solid arrows 
\[ \xymatrix{
\tau^*\varphi^*\Lg{}\ar@{->>}[rr]^{\tau^*(a)} \ar[d]_{\simeq} && \tau^*\Ic \ar@{.>}[d]^e\ar@{^{(}->}[r] & \tau^*\Oc_Y \ar[d]^{\simeq} \\ 
\varphi^*\Lg{} \ar@{->>}[rr]^a && \Ic \ar@{^{(}->}[r] & \Oc_Y
}\]
anticommutes. Using the indicated bijectivities, surjectivities and injectivities, we deduce the existence of the isomorphism $e$ which makes the right square commute and the left one anticommute. Thus $(-e) \circ a' = a$. It follows that $a$ and $a'$ define the same morphism $Y \to P$, so that $\alpha=\alpha \circ \tau$. By the universal property of the $G$-quotient, the morphism $P \to Y/G$ admits a section $\gamma \colon Y/G \to P$ such that $\alpha = \gamma \circ \varphi$. The morphism $\gamma$ is represented by a surjective morphism of $\Oc_{Y/G}$-modules $g \colon \Lg{} \to \Fc$ with $\Fc$ locally free of rank one. From the relation $\alpha = \gamma \circ \varphi$, we deduce the existence of an isomorphism $\varphi^*\Fc  \simeq \Ic$ identifying the morphisms $a$ and $\varphi^*(g)$. The composite $\varphi^*\Lg{} \xrightarrow{a} \Ic \to \Oc_Y$ is adjoint to the inclusion $\Lg{} \to \varphi_*\Oc_Y$, and factors through $\varphi^*(g)$. It follows that the inclusion $\Lg{} \to \varphi_*\Oc_Y$ factors through $g$, so that $g$ is injective, hence an isomorphism. Thus the $\Oc_{Y/G}$-module $\Lg{}\simeq\Fc$ is locally free of rank one, proving \eqref{prop:def_pure:line}. The exact sequence \eqref{seq:l} shows that the $\Oc_{Y/G}$-module $\varphi_*\Oc_Y$ is locally free of rank two, being an extension of two locally free modules of rank one. Since $\varphi$ is finite, we obtain \eqref{prop:def_pure:flat}. Finally $a$ is an isomorphism (because $\varphi^*(g)$ is one), which proves \eqref{prop:def_pure:seq}. 
\end{proof}

\begin{proposition}
\label{lemm:Gregular}
Let $f\colon Y \to X$ be a $G$-morphism, and consider the morphism of $\Oc_{Y/G}$-modules $\lambda_f \colon (f/G)^*\Lg{X} \to \Lg{Y}$. Then:
\begin{enumerate}[(i)]
\item \label{lemm:Gregular:1} $\lambda_f$ is surjective if and only if $Y \to (Y/G) \times_{X/G} X$ is a closed embedding.

\item \label{lemm:Gregular:2} $\lambda_f$ is bijective if and only if $Y \to (Y/G) \times_{X/G} X$ is an isomorphism.

\item \label{lemm:Gregular:3} If $\lambda_f$ is surjective then $f$ is FPR (i.e.\ $f^{-1}(X\inv)=Y\inv$).

\item \label{lemm:Gregular:4} If $Y$ is pure and $f$ is FPR, then $\lambda_f$ is surjective.
\end{enumerate}
\end{proposition}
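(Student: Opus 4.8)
The strategy is to recognise the left-hand vertical map of \eqref{diag:functorial} as the algebra homomorphism attached to the canonical morphism $\psi \colon Y \to (Y/G) \times_{X/G} X$, deduce \eqref{lemm:Gregular:1} and \eqref{lemm:Gregular:2} from a diagram chase, and extract \eqref{lemm:Gregular:3} and \eqref{lemm:Gregular:4} from the description of the fixed ideals in \autoref{lemm:altdef_fixed}.

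First I would set up the dictionary for \eqref{lemm:Gregular:1} and \eqref{lemm:Gregular:2}. The projection $(Y/G) \times_{X/G} X \to Y/G$ is finite, being a base change of $\varphi_X$ (\autoref{prop:Gquotient}), and since $\varphi_X$ is affine the fibre product is $\Spec_{Y/G}\bigl((f/G)^* {\varphi_X}_* \Oc_X\bigr)$. As $\varphi_Y$ is also finite, $\psi$ is a morphism of affine $Y/G$-schemes, corresponding to the $\Oc_{Y/G}$-algebra homomorphism $\beta \colon (f/G)^* {\varphi_X}_* \Oc_X \to {\varphi_Y}_* \Oc_Y$, which is exactly the left vertical arrow of \eqref{diag:functorial}. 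Hence $\psi$ is a closed embedding (resp.\ an isomorphism) if and only if $\beta$ is surjective (resp.\ bijective), and it remains to compare $\beta$ with $\lambda_f$. Pulling back the sequence \eqref{seq:l} for $X$ along $f/G$ gives a right-exact sequence; I would first note that its left map $\Oc_{Y/G} \to (f/G)^* {\varphi_X}_* \Oc_X$ is injective, because composing it with $\beta$ gives the unit $\Oc_{Y/G} \to {\varphi_Y}_* \Oc_Y$ of \eqref{seq:l} for $Y$ (the left square of \eqref{diag:functorial} commutes), which is injective. The left two columns of \eqref{diag:functorial} thus form a morphism of short exact sequences, the identity on the common subobject $\Oc_{Y/G}$ (as $\beta$ is an algebra map) and $\lambda_f$ on the quotients. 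The snake lemma then yields isomorphisms $\ker \beta \cong \ker \lambda_f$ and $\coker \beta \cong \coker \lambda_f$, which combined with the dictionary above give \eqref{lemm:Gregular:1} and \eqref{lemm:Gregular:2}.

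For \eqref{lemm:Gregular:3} and \eqref{lemm:Gregular:4} I would use \autoref{lemm:altdef_fixed} to write $\Ic_{X\inv} = \im(\varphi_X^* \Lg{X} \to \Oc_X)$, so that, pulling back along $f$, $\Ic_{f^{-1}(X\inv)} = \Ic_{X\inv}\Oc_Y = \im(f^* \varphi_X^* \Lg{X} \to \Oc_Y)$, and likewise $\Ic_{Y\inv} = \im(\varphi_Y^* \Lg{Y} \to \Oc_Y)$. Under the identification $f^* \varphi_X^* \Lg{X} \cong \varphi_Y^* (f/G)^* \Lg{X}$, \autoref{lemm:lambda_f} says that the first of these structure maps factors as $\varphi_Y^*(\lambda_f)$ followed by the second. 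For \eqref{lemm:Gregular:3}: if $\lambda_f$ is surjective then so is $\varphi_Y^*(\lambda_f)$ (right exactness of $\varphi_Y^*$), whence the two images coincide, that is $\Ic_{f^{-1}(X\inv)} = \Ic_{Y\inv}$, so $f$ is FPR.

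Finally \eqref{lemm:Gregular:4}. If $Y$ is pure, \autoref{prop:def_pure} tells us that $\varphi_Y^* \Lg{Y} \to \Oc_Y$ is injective with image $\Ic_{Y\inv}$ and that $\varphi_Y$ is flat of rank two, hence faithfully flat (\S\ref{sect:rank}). The FPR hypothesis gives $\im(f^* \varphi_X^* \Lg{X} \to \Oc_Y) = \Ic_{Y\inv} = \im(\varphi_Y^* \Lg{Y} \to \Oc_Y)$; since the latter map is injective and, by \autoref{lemm:lambda_f}, factors the former through $\varphi_Y^*(\lambda_f)$, this forces $\varphi_Y^*(\lambda_f)$ to be surjective, and faithfully flat descent along $\varphi_Y$ then gives surjectivity of $\lambda_f$. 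I expect the delicate points to be the injectivity of the pulled-back unit needed to run the snake lemma in \eqref{lemm:Gregular:1}--\eqref{lemm:Gregular:2}, and, in \eqref{lemm:Gregular:4}, the combined use of injectivity coming from purity and of faithfully flat descent of surjectivity; the remaining manipulations of images of ideals are routine.
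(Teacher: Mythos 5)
Your proof is correct and follows essentially the same route as the paper: both parts rest on the same two commutative diagrams (the algebra-level diagram \eqref{diag:functorial} identifying the map to $(Y/G)\times_{X/G}X$, and the pullback of the presentations of $\Oc_{X\inv}$ and $\Oc_{Y\inv}$ from \autoref{lemm:altdef_fixed} via \autoref{lemm:lambda_f}). The only differences are cosmetic: you package the diagram chase for \eqref{lemm:Gregular:1}--\eqref{lemm:Gregular:2} as a snake-lemma argument (after checking injectivity of the pulled-back unit, which the paper's chase does not even need), and in \eqref{lemm:Gregular:4} you descend surjectivity of $\varphi_Y^*(\lambda_f)$ by faithful flatness where the paper uses only surjectivity of $\varphi_Y$.
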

\begin{proof}
We have a commutative diagram of $\Oc_{Y/G}$-modules, with exact rows
\[ \xymatrix{
&(f/G)^* \Oc_{X/G}\ar[r] \ar[d]& (f/G)^*{\varphi_X}_*\Oc_X \ar[d] \ar[r] & (f/G)^*\Lg{X}\ar[d]^{\lambda_f} \ar[r] & 0\\
0 \ar[r] &\Oc_{Y/G}  \ar[r] & {\varphi_Y}_*\Oc_Y \ar[r] & \Lg{Y} \ar[r] & 0
}\]
The left vertical arrow is an isomorphism. Since $\varphi_Y$ and $\varphi_X$ are affine, the middle vertical arrow is surjective, resp.\ bijective, if and only if $Y \to (Y/G) \times_{X/G} X$ is a closed embedding, resp.\ an isomorphism. The equivalences \eqref{lemm:Gregular:1} and \eqref{lemm:Gregular:2} now follow from a diagram chase.

We have by \autoref{lemm:lambda_f} a commutative diagram of $\Oc_Y$-modules
\[ \xymatrix{
f^* \varphi_X^*\Lg{X}\ar[r] \ar[d]_{u}& f^*\Oc_X \ar[d] \ar[r] & f^*\Oc_{X\inv}\ar[d]_w \ar[r] & 0\\
\varphi_Y^*\Lg{Y}  \ar[r] & \Oc_Y \ar[r] & \Oc_{Y\inv} \ar[r] & 0
}\]
where $u$ is the composite of $\varphi_Y^*(\lambda_f)$ with an isomorphism. The morphism $w$ is injective if and only if $Y\inv=f^{-1}(X\inv)$. The surjectivity of $u$ is equivalent to that of $\varphi_Y^*(\lambda_f)$. But $\varphi_Y^*(\lambda_f)$ is surjective if and only if $\lambda_f$ is so, because the morphism $\varphi_Y$ is surjective (the support of $\coker \varphi_Y^*(\lambda_f)=\varphi_Y^*(\coker \lambda_f)$ is the inverse image under $\varphi_Y$ of the support of $\coker \lambda_f$). Now the rows in the diagram above are exact by \autoref{lemm:altdef_fixed}, and the middle arrow is an isomorphism. By a diagram chase, we see that the surjectivity of $u$ implies the injectivity of $w$; this proves \eqref{lemm:Gregular:3}. In case $Y$ is pure, the morphism $\varphi_Y^*\Lg{Y} \to \Oc_Y$ is injective by \autoref{prop:def_pure} \eqref{prop:def_pure:seq}, so that the injectivity of $w$ implies the surjectivity of $u$, proving \eqref{lemm:Gregular:4}. 
\end{proof}

\begin{proposition}
\label{prop:pure}
Let $f \colon Y \to X$ be a $G$-morphism between pure $G$-varieties. Assume that $f$ is FPR. Then the morphism of $\Oc_{Y/G}$-modules $\lambda_f\colon (f/G)^*\Lg{X} \to \Lg{Y}$ is an isomorphism, and the following commutative square is cartesian
\[ \xymatrix{
Y\ar[r]^f \ar[d]_{\varphi_Y} & X \ar[d]^{\varphi_X} \\ 
Y/G \ar[r]^{f/G} & X/G
}\]
\end{proposition}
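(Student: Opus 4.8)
The plan is to recognise that the two conclusions of the proposition are, in fact, two faces of the same statement. By \autoref{lemm:Gregular}~\eqref{lemm:Gregular:2}, the morphism $\lambda_f$ is bijective if and only if the canonical morphism $Y \to (Y/G) \times_{X/G} X$ is an isomorphism, and this latter condition is precisely the assertion that the displayed commutative square is cartesian. So I would reduce the whole proposition to a single claim: that $\lambda_f \colon (f/G)^*\Lg{X} \to \Lg{Y}$ is an isomorphism of $\Oc_{Y/G}$-modules.

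The first step is to obtain surjectivity of $\lambda_f$ for free. Since $Y$ is pure and $f$ is assumed to be FPR, \autoref{lemm:Gregular}~\eqref{lemm:Gregular:4} applies verbatim and yields that $\lambda_f$ is surjective. The remaining and genuine task is then to promote this surjection to a bijection, and here the purity hypothesis on both $X$ and $Y$ enters decisively.

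The key point is that both the source and the target of $\lambda_f$ are invertible. By \autoref{prop:def_pure}~\eqref{prop:def_pure:line}, purity of $X$ and of $Y$ makes $\Lg{X}$ and $\Lg{Y}$ locally free of rank one over $X/G$ and $Y/G$ respectively; since pullback preserves local freeness of rank one, $(f/G)^*\Lg{X}$ is likewise a line bundle on $Y/G$. Thus $\lambda_f$ is a surjection between two line bundles on $Y/G$. Working on an affine open over which both become free of rank one, the morphism is given by multiplication by a single element of $\Oc_{Y/G}$, and surjectivity forces that element to be a unit; the map is therefore an isomorphism locally, hence globally. This establishes the bijectivity of $\lambda_f$, and by the equivalence recalled at the outset, the square is cartesian.

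I do not expect a serious obstacle here, since the substantive work has already been carried out in \autoref{lemm:Gregular} and \autoref{prop:def_pure}: the proof is essentially an assembly of those results. The only subtlety worth stating explicitly is the elementary but indispensable observation that a surjection of line bundles is automatically injective, which is what converts the surjectivity coming from the FPR hypothesis into the full isomorphism statement.
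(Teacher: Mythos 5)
Your proposal is correct and follows essentially the same route as the paper's proof: surjectivity of $\lambda_f$ from \autoref{lemm:Gregular}~\eqref{lemm:Gregular:4}, upgraded to bijectivity because source and target are invertible by \autoref{prop:def_pure}~\eqref{prop:def_pure:line}, and then \autoref{lemm:Gregular}~\eqref{lemm:Gregular:2} to get the cartesian square. The paper's own proof is just a more compressed statement of exactly this argument.
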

\begin{proof}
The morphism of $\Oc_{Y/G}$-modules $\lambda_f$ is an isomorphism if and only it is surjective, because both its source and target are locally free of rank one. Therefore the statement follows from \autoref{lemm:Gregular} \eqref{lemm:Gregular:4} and \eqref{lemm:Gregular:2}.
\end{proof}

\begin{corollary}
\label{cor:closed}
Let $Y$ be a $G$-variety, and $Z$ a closed $G$-subscheme of $Y$. Assume that both $Y$ and $Z$ are pure. Then the morphism $Z/G \to Y/G$ is a closed embedding.
\end{corollary}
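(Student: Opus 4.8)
The plan is to apply \autoref{prop:pure} to the closed embedding $\iota\colon Z \to Y$, and then to descend the closed-immersion property from $\iota$ to its image under the quotient functor along the faithfully flat morphism $\varphi_Y$.

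First I would observe that $\iota$ is a $G$-morphism between pure $G$-varieties (both $Z$ and $Y$ are pure by hypothesis), and that it is FPR by \autoref{rem:fixed}, being the embedding of a $G$-subscheme. Thus \autoref{prop:pure} applies to $\iota$ and shows that its associated square is cartesian; concretely, $\iota\colon Z \to Y$ is the base change of $\iota/G\colon Z/G \to Y/G$ along $\varphi_Y\colon Y \to Y/G$.

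It then remains to deduce that $\iota/G$ is a closed embedding from the fact that its base change $\iota$ along $\varphi_Y$ is one. For this I would note that $\varphi_Y$ is finite by \autoref{prop:Gquotient}, hence quasi-compact, and faithfully flat by \autoref{prop:def_pure}~\eqref{prop:def_pure:flat} together with \S\ref{sect:rank}~\eqref{rank:flat}; thus $\varphi_Y$ is an fpqc covering. Since being a closed immersion is a property that is local on the target for the fpqc topology, the conclusion follows by descent.

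The argument is short once \autoref{prop:pure} is in hand; the only input external to the results of this section is the descent of closed immersions along the fpqc covering $\varphi_Y$, which I regard as the one step to be justified by a reference rather than computed. Everything else---the cartesianness of the square and the verification that $\varphi_Y$ is an fpqc covering---is immediate from the structure theory of pure $G$-varieties already developed, so I expect the descent citation, rather than any geometric subtlety, to be the principal point requiring care.
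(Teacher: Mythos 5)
Your proof is correct and follows essentially the same route as the paper: both verify that $Z \to Y$ is FPR, invoke \autoref{prop:pure} to realise it as the base change of $Z/G \to Y/G$ along the faithfully flat (finite) morphism $\varphi_Y$, and conclude by descent of closed immersions. The only cosmetic difference is that the paper cites EGA~IV$_2$ (2.7.1, (xii)) directly rather than phrasing the descent in fpqc terms.
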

\begin{proof}
Since $Z\inv =  Y\inv \cap Z$, by \autoref{prop:pure} the morphism $Z \to Y$ is the base change of $Z/G \to Y/G$ along $\varphi_Y$. The morphism $\varphi_Y$ is faithfully flat (\autoref{prop:def_pure} \eqref{prop:def_pure:flat}), and the statement follows by descent \cite[(2.7.1, (xii))]{ega-4-2}.
\end{proof}

\subsection{Actions on blow-ups}
\label{sect:antisym}

\begin{definition}
\label{def:antsym}
We say that a closed subscheme $Z$ of a $G$-variety $Y$ is \emph{admissible} if it is $G$-invariant, and if there is an integer $r \geq 1$ and a coherent ideal $\Jc$ of $\Oc_{Y/G}$ such that $(\Ic_Z)^r = \Jc \Oc_Y$.
\end{definition}

\begin{remark}
\label{rem:anti}
The base change of an admissible closed subscheme along a $G$-morphism is a admissible closed subscheme.
\end{remark}

\begin{lemma}
\label{lemm:anti:fix}
Let $Y$ be a $G$-variety. Then $Y\inv$ is an admissible closed subscheme of $Y$.
\end{lemma}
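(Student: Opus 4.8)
The plan is to verify the two conditions of \autoref{def:antsym} for the closed subscheme $Z = Y\inv$, with the choice $r = 2$. The essential input is \autoref{lemm:altdef_fixed}, which presents $\Ic_{Y\inv}$ as the image of the morphism $\varphi_Y^*\Lg{Y} \to \Oc_Y$ adjoint to the inclusion $\Lg{Y} \hookrightarrow {\varphi_Y}_*\Oc_Y$; concretely this means that $\Ic_{Y\inv}$ is, locally over $Y/G$, the ideal of $\Oc_Y$ generated by the local sections of $\Lg{Y}$.

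The key observation I would isolate first is that $\sigma_Y$ acts by $-\id$ on $\Lg{Y}$. Indeed $\Lg{Y} = \im(\id - \sigma_Y)$ is contained in $\ker(\id + \sigma_Y)$, a fact already exploited in the proof of \autoref{prop:def_pure}. Hence the product, in the $\Oc_{Y/G}$-algebra ${\varphi_Y}_*\Oc_Y$, of two local sections of $\Lg{Y}$ is invariant under $\sigma_Y$, so it lies in the subsheaf $\Oc_{Y/G} = \ker(\id - \sigma_Y)$. Multiplication thus induces a morphism of coherent $\Oc_{Y/G}$-modules $\Lg{Y} \otimes_{\Oc_{Y/G}} \Lg{Y} \to \Oc_{Y/G}$, and I define $\Jc$ to be its image, a coherent ideal of $\Oc_{Y/G}$. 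The $G$-invariance of $Y\inv$ is immediate: since $\sigma_Y$ acts by $-\id$ on $\Lg{Y}$, the ideal $\Ic_{Y\inv}$ is stable under the involution (alternatively, $Y\inv$, being the equaliser of $\id_Y$ and the involution, is visibly invariant).

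It remains to establish the equality $(\Ic_{Y\inv})^2 = \Jc\,\Oc_Y$ of ideals of $\Oc_Y$, which, being local, I would check on an affine open $G$-subscheme $\Spec A \subseteq Y$ with quotient $\Spec A^G$; such subschemes cover $Y$ by the discussion in \S\ref{sect:Gquotients}. There $\Lg{Y}$ restricts to the $A^G$-submodule $L = \{a - \sigma(a) : a \in A\}$ of $A$, the ideal $\Ic_{Y\inv}$ restricts to $LA$, and $\Jc$ restricts to the ideal $J$ of $A^G$ generated by the products $\ell\ell'$ with $\ell, \ell' \in L$. Now both $(LA)^2$ and $J A$ equal the ideal of $A$ generated by these products $\ell\ell'$: the former because $(LA)^2$ is generated over $A$ by such products, the latter because $J$ is generated over $A^G \subseteq A$ by them. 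This yields $(\Ic_{Y\inv})^2 = \Jc\,\Oc_Y$ and completes the argument.

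The only genuinely non-formal step is the remark that products of anti-invariant sections are invariant: this is what pins down the exponent $r = 2$ and produces the ideal $\Jc$ descending to the quotient $Y/G$. The remaining points — coherence of $\Jc$, the compatibility of the local descriptions, and the elementary identification of the two ideals above — are routine bookkeeping.
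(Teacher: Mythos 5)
Your proof is correct and follows essentially the same route as the paper: the paper likewise takes $r=2$, defines $\Jc$ as the image of $\Lg{Y}^{\otimes 2} \to {\varphi_Y}_*\Oc_Y$ (noting it lands in $\Oc_{Y/G}=\ker(\sigma_Y-\id)$ because $\Lg{Y}\subset\ker(\id+\sigma_Y)$), and deduces $(\Ic_{Y\inv})^2=\Jc\,\Oc_Y$ from \autoref{lemm:altdef_fixed}. You merely spell out the local verification that the paper leaves implicit.
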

\begin{proof}
The fixed locus is a $G$-invariant closed subscheme. The image of $\Lg{Y}^{\otimes 2} \to ({\varphi_Y}_*\Oc_Y)^{\otimes 2} \to {\varphi_Y}_*\Oc_Y$ is contained in $\Oc_{Y/G} = \ker (\sigma_Y - \id)$, hence defines an ideal $\Jc$ of $\Oc_{Y/G} $. By \autoref{lemm:altdef_fixed}, we have $\Jc \Oc_Y = (\Ic_{Y \inv})^2$.
\end{proof}

\begin{proposition}
\label{prop:blowup_is_pure}
Let $Y$ be a $G$-variety, and $Z$ an admissible closed subscheme of $Y$. Let $y\colon Y' \to Y$ be the blow-up of $Z$ in $Y$. Then there is a unique $G$-action on $Y'$ such that $y$ is a $G$-morphism. Moreover $y$ is FPR (i.e.\ $y^{-1}(Y\inv) = (Y')\inv$).

In addition, the $G$-variety $Y'$ is pure under any of the following assumptions. 
\begin{enumerate}[(i)]
\item \label{prop:blowup_is_pure:inv} $Z=Y\inv$.
\item \label{prop:blowup_is_pure:pure} $Y$ is pure.
\end{enumerate}
\end{proposition}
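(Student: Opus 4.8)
The plan is to construct the lifted involution from the universal property of the blow-up, and then to reduce both the FPR property and the two purity assertions to an explicit computation on the standard affine charts, the whole argument resting on the observation that admissibility lets us blow up a center defined by $G$-invariant equations.

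First I would construct the action. Writing $\tau$ for the involution of $Y$, the hypothesis that $Z$ is $G$-invariant means $\tau^{-1}Z=Z$ as closed subschemes, so the inverse image ideal of $Z$ under $\tau\circ y$ coincides with $\Ic_Z\Oc_{Y'}=\Ic_{y^{-1}Z}$, which is invertible (the exceptional divisor). By the universal property of the blow-up $y\colon Y'\to Y$ of $Z$, the morphism $\tau\circ y$ factors uniquely as $y\circ\tau'$ for a unique $\tau'\colon Y'\to Y'$. Since $\tau^2=\id$, the morphisms $\tau'\circ\tau'$ and $\id_{Y'}$ both factor $y$ through itself, whence $\tau'^2=\id_{Y'}$ by uniqueness; thus $\tau'$ is a $G$-action making $y$ a $G$-morphism, and any other such action factors $\tau\circ y$ through $y$ and so agrees with $\tau'$.

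The key step, and the one I expect to be the real content, is the FPR property. Here I would use that the blow-up of $\Ic_Z$ agrees with that of $\Ic_Z^r$, together with the admissibility relation $\Ic_Z^r=\Jc\Oc_Y$ with $\Jc$ an ideal of $\Oc_{Y/G}$. Working over an affine open $\Spec A$ of $Y/G$ with preimage $\Spec B$ in $Y$ (so that the involution is an $A$-algebra involution $\sigma$ of $B$, and $A$ consists of the invariants), I would choose generators $g_0,\dots,g_m\in A$ of $\Jc$; their images in $B$ generate $\Ic_Z^r$ and are $\sigma$-fixed. The blow-up is then covered by the $G$-stable charts $U_i$ with coordinate ring $B[g_j/g_i]$, on each of which $\tau'$ fixes the fractions $g_j/g_i$ and acts through $\sigma$ on $B$. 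Consequently the ideal of the fixed locus of such a chart, generated by the elements $\xi-\tau'^{\#}(\xi)$, is already generated by the $b-\sigma(b)$ with $b\in B$; that is, it equals $\Ic_{Y\inv}\Oc_{U_i}=\Ic_{y^{-1}(Y\inv)}|_{U_i}$, where I use \autoref{lemm:altdef_fixed} to identify $\Ic_{Y\inv}$ with the ideal generated by $\{b-\sigma(b)\}$. As these charts cover $Y'$, this yields $(Y')\inv=y^{-1}(Y\inv)$ scheme-theoretically.

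For purity I would argue chart by chart from the same description. In case \eqref{prop:blowup_is_pure:inv}, FPR gives $(Y')\inv=y^{-1}(Y\inv)=y^{-1}(Z)$, which is the exceptional divisor of the blow-up and hence an effective Cartier divisor. In case \eqref{prop:blowup_is_pure:pure}, purity of $Y$ shows via \autoref{prop:def_pure} that $\Ic_{Y\inv}$ is locally generated by a single nonzerodivisor $h\in B$; since the chart ring $B[g_j/g_i]$ is a subring of the localisation $B_{g_i}$, the element $h$ remains a nonzerodivisor there, so $\Ic_{Y\inv}\Oc_{U_i}=h\,\Oc_{U_i}$ is invertible, and $(Y')\inv=y^{-1}(Y\inv)$ is again an effective Cartier divisor. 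The main obstacle throughout is the passage to invariant generators: it is exactly the admissibility hypothesis that allows the center to be defined by functions pulled back from $Y/G$, thereby trivialising the action of $\tau'$ on the blow-up coordinates, after which everything reduces to the behaviour of the single ideal $\Ic_{Y\inv}$.
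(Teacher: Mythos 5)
Your proposal is correct, but it follows a genuinely different route from the paper on the two non-formal points. For the FPR property, the paper argues structurally: it chooses a closed subscheme $W$ of $Y/G$ with $\Ic_{\varphi_Y^{-1}W}=(\Ic_Z)^r$, blows up $W$ in $Y/G$ to get a scheme $B$ with trivial $G$-action, uses the universal property to produce $Y'\to B$, deduces that $Y'\to (Y'/G)\times_{Y/G}Y$ is a closed embedding, and then invokes the criterion of \autoref{lemm:Gregular} (surjectivity of $\lambda_y$ implies FPR). You instead compute directly on the standard charts $\Spec B[g_j/g_i]$ of the blow-up of $\Jc\Oc_Y=(\Ic_Z)^r$, observing that the lifted involution fixes the fractions $g_j/g_i$ and hence that the equalizer ideal $(\xi-\tau'^{\#}\xi)$ reduces to $(b-\sigma(b))\,\Oc_{U_i}=\Ic_{Y\inv}\Oc_{U_i}$; this is a correct and more elementary argument (it uses the same identification of $\Ic_{Y\inv}$ as \autoref{lemm:altdef_fixed}, and the standard fact that blowing up $\Ic_Z$ and $(\Ic_Z)^r$ give the same scheme, which the paper also uses implicitly). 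Similarly, for purity in case \eqref{prop:blowup_is_pure:pure} the paper quotes \autoref{lemm:blow_cartier}, whose proof goes through associated points of the blow-up; you reprove this in the case at hand by noting that a local generator $h$ of $\Ic_{Y\inv}$ stays a nonzerodivisor in the chart ring because the latter embeds in the localisation $B_{g_i}$ --- also correct, and arguably more transparent, though one should say a word (as you do, via \autoref{prop:def_pure}) about why the local trivialisation of $\Ic_{Y\inv}$ can be taken over preimages of affine opens of $Y/G$. Case \eqref{prop:blowup_is_pure:inv} and the construction and uniqueness of the lifted action are identical in both treatments. What the paper's route buys is reuse of the $\Lg{}$-formalism (\autoref{lemm:Gregular}, \autoref{lemm:blow_cartier}) that is needed elsewhere anyway; what your route buys is an explicit description of the lifted involution on charts and independence from that machinery.
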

\begin{proof}
The closed subscheme $Z$ of $Y$ is $G$-invariant, therefore by the universal property of the blow-up, the $G$-action on $Y$ lifts uniquely to a $G$-action on $Y'$ making $y$ a $G$-morphism.

Let $W$ be a closed subscheme of $Y/G$ whose inverse image $\varphi_Y^{-1}W$ is defined in $Y$ by the ideal $(\Ic_Z)^r$, for some integer $r\geq 1$. Then $y\colon Y' \to Y$ is also the blow-up of $\varphi_Y^{-1}W$ in $Y$. Let $b\colon B \to Y/G$ be the blow-up of $W$ in $Y/G$. By the universal property of the blow-up, we obtain a $G$-morphism $Y' \to B$, where $B$ is endowed with the trivial $G$-action. This morphism factors as $Y' \to Y'/G \xrightarrow{g} B$ with $y/G=b\circ g$. The morphism $Y' \to B \times_{Y/G} Y$ is a closed embedding, and factors through $Y' \to (Y'/G) \times_{Y/G} Y$. Therefore the latter is a closed embedding (recall that all schemes are separated). By \autoref{lemm:Gregular}, it follows that $y$ is FPR. In case \eqref{prop:blowup_is_pure:pure}, it follows from \autoref{lemm:blow_cartier} below that $(Y')\inv=y^{-1}(Y\inv) \to Y'$ is an effective Cartier divisor. The same conclusion holds in case \eqref{prop:blowup_is_pure:inv}, since $(Y')\inv$ is the exceptional divisor in $Y'$.
\end{proof}

\begin{lemma}
\label{lemm:blow_cartier}
The base change of an effective Cartier divisor (resp.\ a nowhere dense closed subscheme) under a blow-up morphism is an effective Cartier divisor (resp.\ a nowhere dense closed subscheme).
\end{lemma}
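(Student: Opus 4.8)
The plan is to reduce both assertions to a single structural property of the blow-up morphism $y \colon Y' \to Y$. Write $Z$ for the centre and $E = y^{-1}(Z)$ for the exceptional divisor. Then $E$ is an effective Cartier divisor on $Y'$, and $y$ restricts to an isomorphism $Y' - E \to Y - Z$. The key point I would use is that an effective Cartier divisor is locally defined by a single non-zero-divisor, and therefore contains no associated point of the ambient scheme. Applied to $E$, this shows that every associated point of $Y'$ --- and in particular every generic point --- lies in the open subscheme $Y' - E$. Since the formation of associated points commutes with restriction to open subschemes, it follows that $y$ carries each associated (resp.\ generic) point of $Y'$ to an associated (resp.\ generic) point of $Y$, via the isomorphism above.

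For the effective Cartier divisor statement I would work locally on $Y$. Let $D\subset Y$ be an effective Cartier divisor, written as $V(f)$ with $f$ a non-zero-divisor of $\Oc_Y$; its base change $y^{-1}D$ is then given by $V(y^{\#}f)$, and it suffices to prove that $y^{\#}f$ is a non-zero-divisor of $\Oc_{Y'}$, i.e.\ that it does not vanish at any associated point of $Y'$. By the first paragraph, such a point $\eta'$ maps to an associated point $\eta = y(\eta')$ of $Y$, and $y$ induces an isomorphism of local rings $\Oc_{Y,\eta} \simeq \Oc_{Y',\eta'}$. As $f$ is a non-zero-divisor it does not vanish at $\eta$, whence $y^{\#}f$ does not vanish at $\eta'$, as needed.

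For the nowhere dense statement, let $T \subset Y$ be a closed subscheme containing no generic point of $Y$. I would show that $y^{-1}T$ contains no generic point of $Y'$. Indeed, any generic point $\eta'$ of $Y'$ lies in $Y' - E$ and is carried by $y$ to a generic point $\eta$ of $Y$. Were $\eta'$ to lie in $y^{-1}T$, the point $\eta$ would be a generic point of $Y$ contained in $T$, contradicting the hypothesis. Hence $y^{-1}T$ contains no generic point of $Y'$, that is, it is nowhere dense.

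The only non-formal ingredient is the claim that the associated points of $Y'$ avoid the exceptional divisor, which is exactly where I use that $E$ is an effective Cartier divisor; this is the heart of the matter. Everything else amounts to transporting the relevant condition (being a non-zero-divisor, resp.\ containing no generic point) across the isomorphism $Y' - E \simeq Y - Z$, using that these conditions are tested at the associated, resp.\ generic, points, and that both notions localise correctly to open subschemes. I would take care to record that \emph{effective Cartier divisor} and \emph{nowhere dense} are local conditions, so that the local computations above suffice; note also that no density hypothesis on $Y - Z$ is required, since a maximal (associated) point of the open $Y-Z$ is automatically a maximal (associated) point of $Y$.
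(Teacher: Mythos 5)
Your proposal is correct and follows essentially the same route as the paper: both arguments hinge on the observation that the exceptional divisor, being an effective Cartier divisor, contains no associated (resp.\ generic) point of the blow-up, so that every such point lies in the locus where the blow-up is an isomorphism and maps to an associated (resp.\ generic) point of the base. The extra local verification you give for the Cartier-divisor case (checking that the pulled-back local equation remains a non-zero-divisor) just makes explicit what the paper leaves implicit in ``the statement follows.''
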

\begin{proof}
Let $D \to Y$ be an effective Cartier divisor (resp.\ nowhere dense closed subscheme), and $b\colon B\to Y$ the blow-up of a closed subscheme $Z$ in $Y$. Since the exceptional divisor $b^{-1}Z \to B$ is an effective Cartier divisor, any associated (resp.\ generic) point $\eta$ of $B$ is contained in $U=B - b^{-1}Z$. Since $b$ maps $U$ isomorphically onto an open subscheme of $Y$, it follows that $b(\eta)$ is an associated (resp.\ generic) point of $Y$, and therefore cannot be contained in the effective Cartier divisor (resp.\ nowhere dense closed subscheme) $D$. Thus no associated (resp.\ generic) point of $B$ is mapped to $D$, and the statement follows.
\end{proof}

\subsection{Residual scheme}
\label{sect:res}
Let $X$ be a scheme, and $D \to Z \to X$ be closed embeddings. Assume that $D \to X$ is an effective Cartier divisor. The \emph{residual scheme to $D$ in $Z$ on $X$} is the unique closed subscheme $R$ of $X$ such that $\Ic_R \cdot \Ic_D = \Ic_Z$, where we use the notations of \S\ref{sect:ideals} (see \cite[Definition~9.2.1]{Ful-In-98}).

\begin{lemma}
\label{lemm:residual}
Consider a commutative diagram with cartesian squares
\[ \xymatrix{
D'\ar[r] \ar[d] & Z' \ar[d]\ar[r] & X'\ar[d]^f \\ 
D \ar[r] & Z \ar[r] & X
}\]
Assume that the horizontal arrows are closed embeddings, and that the horizontal composites are effective Cartier divisors. Let $R$ be the residual scheme to $D$ in $Z$ on $X$, and $R'$ the residual scheme to $D'$ in $Z'$ on $X'$.

Then $R'=f^{-1}R$ as closed subschemes of $X'$.
\end{lemma}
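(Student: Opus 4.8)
The plan is to reduce the whole statement to the defining ideal-theoretic equation of the residual scheme and to transport that equation along $f$ using the formalism for inverse-image ideals set up in \S\ref{sect:ideals}. Recall that, by definition, $R$ is the unique closed subscheme of $X$ with $\Ic_R \cdot \Ic_D = \Ic_Z$ as ideals of $\Oc_X$, and $R'$ the unique closed subscheme of $X'$ with $\Ic_{R'} \cdot \Ic_{D'} = \Ic_{Z'}$ as ideals of $\Oc_{X'}$; uniqueness holds precisely because the horizontal composites $D \to X$ and $D' \to X'$ are effective Cartier divisors, so that $\Ic_D$ and $\Ic_{D'}$ are locally generated by non-zero-divisors. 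My target is therefore to show that $f^{-1}R$ satisfies the relation characterising $R'$.

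First I would record what the cartesianness of the two squares gives. Since $Z' = f^{-1}Z$ and $D' = f^{-1}D$ as closed subschemes of $X'$, the identity $\Ic_W \Oc_{X'} = \Ic_{f^{-1}W}$ of \S\ref{sect:ideals} yields $\Ic_{Z'} = \Ic_Z \Oc_{X'}$ and $\Ic_{D'} = \Ic_D \Oc_{X'}$, and likewise $\Ic_{f^{-1}R} = \Ic_R \Oc_{X'}$. Next I would apply the operation $\Ic \mapsto \Ic\Oc_{X'}$ to the defining relation $\Ic_R \cdot \Ic_D = \Ic_Z$, invoking the multiplicativity $(\Ic\Oc_{X'}) \cdot (\Jc\Oc_{X'}) = (\Ic \cdot \Jc)\Oc_{X'}$ from \S\ref{sect:ideals}. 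This gives
\[
\Ic_{f^{-1}R} \cdot \Ic_{D'} = (\Ic_R \Oc_{X'}) \cdot (\Ic_D \Oc_{X'}) = (\Ic_R \cdot \Ic_D)\Oc_{X'} = \Ic_Z \Oc_{X'} = \Ic_{Z'}.
\]
Thus $f^{-1}R$ is a closed subscheme of $X'$ fulfilling the defining relation of the residual scheme to $D'$ in $Z'$ on $X'$, and the uniqueness clause in the definition forces $f^{-1}R = R'$.

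The argument is essentially formal, so there is no substantial obstacle; the only point demanding care is to check that the hypotheses really furnish what the definition of the residual scheme requires on $X'$. Concretely, the assumption that the composite $D' \to X'$ is an effective Cartier divisor is exactly what guarantees that the relation $\Ic_{f^{-1}R} \cdot \Ic_{D'} = \Ic_{Z'}$ determines its solution uniquely, and hence legitimises the final identification. Everything else is bookkeeping with the inverse-image ideal notation of \S\ref{sect:ideals}.
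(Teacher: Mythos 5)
Your argument is correct and is essentially identical to the paper's own proof: both transport the defining relation $\Ic_R\cdot\Ic_D=\Ic_Z$ along $f$ using the identities $\Ic_W\Oc_{X'}=\Ic_{f^{-1}W}$ and $(\Ic\Oc_{X'})\cdot(\Jc\Oc_{X'})=(\Ic\cdot\Jc)\Oc_{X'}$ from \S\ref{sect:ideals}, then invoke the uniqueness in the definition of the residual scheme. Your added remark on why the effective Cartier divisor hypothesis on $D'\to X'$ guarantees uniqueness is a fine clarification but not a departure from the paper's route.
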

\begin{proof}
In view of \S\ref{sect:ideals}, we have
\[
\Ic_{Z'}=\Ic_Z \Oc_{X'}=(\Ic_D \cdot \Ic_R)\Oc_{X'} = (\Ic_D \Oc_{X'}) \cdot (\Ic_R\Oc_{X'})=\Ic_{D'} \cdot (\Ic_R\Oc_{X'}).
\]
But $\Ic_{R'}$ is the unique sheaf of ideals of $\Oc_{X'}$ such that $\Ic_{Z'} = \Ic_{D'} \cdot \Ic_{R'}$, hence $\Ic_{R'} = \Ic_R\Oc_{X'}$, proving the statement.
\end{proof}

\begin{lemma}
\label{lemm:univ_res}
Let $D\to Z \to X$ be closed embeddings, with $D\to X$ an effective Cartier divisor. Let $R$ be the residual scheme to $D$ in $Z$ on $X$.
\begin{enumerate}[(i)]
\item \label{rem:R_X} As closed subschemes of $X$, we have $R=X$ if and only if $Z=X$.

\item \label{rem:R_X_e} As closed subschemes of $X$, we have $D=Z$ if and only if $R=\varnothing$.

\item \label{rem:R_Cartier} The morphism $R \to X$ is an effective Cartier divisor if and only if $Z \to X$ is so, in which case $Z=D + R$ as Cartier divisors on $X$.

\item \label{item:isom_off_res} The morphism $D \to Z$ is an isomorphism off $R$.

\item \label{item:univ_res} Let $T$ be a closed subscheme of $Z$, and assume that $D \cap T \to T$ is an effective Cartier divisor. Then $T \subset R$ as closed subschemes of $X$.
\end{enumerate}
\end{lemma}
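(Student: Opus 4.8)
The plan is to work locally on $X$, where all five assertions reduce to elementary manipulations of ideals. Each claim is local, so I may assume $X=\Spec A$ is affine and $\Ic_D$ is generated by a single non-zero-divisor $t\in A$; the defining relation $\Ic_R\cdot\Ic_D=\Ic_Z$ then reads $\Ic_Z=t\,\Ic_R$. The one structural observation underlying everything is that, since $\Ic_D$ is invertible, tensoring the inclusion $\Ic_R\hookrightarrow\Oc_X$ with the line bundle $\Ic_D$ remains injective and identifies $\Ic_R\otimes\Ic_D$ with the subsheaf $\Ic_Z$ of $\Ic_D$. Locally this is just the isomorphism $\Ic_R\xrightarrow{\sim}\Ic_Z$ given by multiplication by $t$, which is injective precisely because $t$ is a non-zero-divisor.

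Granting this, parts \eqref{rem:R_X}, \eqref{rem:R_X_e} and \eqref{rem:R_Cartier} are immediate. For \eqref{rem:R_X}, note $R=X$ means $\Ic_R=0$ and $Z=X$ means $\Ic_Z=0$; these are equivalent since $\Ic_Z\cong\Ic_R\otimes\Ic_D$ and $\Ic_D$ is a line bundle. For \eqref{rem:R_X_e}, $R=\varnothing$ means $\Ic_R=\Oc_X$, which gives $\Ic_Z=\Ic_D$, i.e.\ $Z=D$; conversely $\Ic_Z=\Ic_D$ forces the injection $\Ic_R\otimes\Ic_D\hookrightarrow\Ic_D$ to be onto, hence, twisting back by $\Ic_D^{-1}$, forces the inclusion $\Ic_R\hookrightarrow\Oc_X$ to be an isomorphism. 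For \eqref{rem:R_Cartier}, the isomorphism $\Ic_Z\cong\Ic_R\otimes\Ic_D$ shows $\Ic_Z$ is invertible if and only if $\Ic_R$ is, since $\Ic_D$ is invertible; when this holds, the relation $\Ic_Z=\Ic_R\cdot\Ic_D$ is exactly the statement $Z=D+R$ for the addition of effective Cartier divisors. Part \eqref{item:isom_off_res} is likewise local: over the open complement $X-R$ the ideal $\Ic_R$ restricts to the unit ideal, so $\Ic_Z=\Ic_R\cdot\Ic_D$ restricts to $\Ic_D$; thus $D$ and $Z$ coincide as closed subschemes over $X-R$, and the closed embedding $D\to Z$ restricts to the identity there, which is exactly what it means for $D\to Z$ to be an isomorphism off $R$ (with $R$ viewed in $Z$ via $Z\hookrightarrow X$).

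The only assertion carrying real content is \eqref{item:univ_res}, and it is here that the Cartier hypothesis on $D\cap T$ is used essentially. I want to show $\Ic_R\subseteq\Ic_T$. From $T\subseteq Z$ I have $\Ic_Z\subseteq\Ic_T$, that is $t\,\Ic_R\subseteq\Ic_T$. The assumption that $D\cap T\to T$ is an effective Cartier divisor means exactly that the image $\bar t$ of $t$ in $\Oc_T=A/\Ic_T$ is a non-zero-divisor. Hence for any $f\in\Ic_R$ one has $tf\in\Ic_T$, so $\bar t\,\bar f=0$ in $\Oc_T$, and cancelling the non-zero-divisor $\bar t$ gives $\bar f=0$, i.e.\ $f\in\Ic_T$. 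Therefore $\Ic_R\subseteq\Ic_T$, which is precisely $T\subseteq R$.

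The main (and essentially the only) obstacle is keeping the two reductions straight — reducing $\Ic_D$ to a principal ideal generated by a non-zero-divisor, and matching \eqref{item:isom_off_res} to the paper's definition of ``isomorphism off a closed subset'' — together with the single genuine idea in \eqref{item:univ_res}, namely that cancelling $t$ modulo $\Ic_T$ is legitimate exactly when $D\cap T$ is Cartier on $T$.
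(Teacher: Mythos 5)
Your proof is correct; every step checks out, including the two points that need care: the fact that an ideal of $\Oc_T$ generated by the single element $\bar t$ and defining an effective Cartier divisor forces $\bar t$ itself to be a non-zero-divisor, and the cancellation of $\bar t$ in part~\eqref{item:univ_res}. The route is, however, organised differently from the paper's. The paper disposes of \eqref{rem:R_X}--\eqref{rem:R_Cartier} as immediate from the definition and then deduces \eqref{item:isom_off_res} and \eqref{item:univ_res} formally from the base-change lemma for residual schemes (\autoref{lemm:residual}): for \eqref{item:isom_off_res} one restricts to $X'=X-R$ and invokes \eqref{rem:R_X_e}, and for \eqref{item:univ_res} one pulls back along $T\to X$ (taking $D'=D\cap T$, $Z'=X'=T$) and invokes \eqref{rem:R_X} to get $R'=T$, whence $R\cap T=T$. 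You instead carry out the local ideal computation by hand, with $\Ic_D=(t)$ and $\Ic_Z=t\,\Ic_R$. The substance is the same --- the non-zero-divisor cancellation you perform explicitly in \eqref{item:univ_res} is exactly what is hidden in the uniqueness clause of the residual ideal that powers \autoref{lemm:residual} --- but the paper's packaging makes \eqref{item:isom_off_res} and \eqref{item:univ_res} one-line corollaries of a lemma it needs elsewhere anyway, whereas your version is self-contained and makes the role of the Cartier hypothesis on $D\cap T$ completely transparent. Either proof is acceptable.
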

\begin{proof}
\eqref{rem:R_X}, \eqref{rem:R_X_e} and \eqref{rem:R_Cartier} follow at once from the definition.

\eqref{item:isom_off_res}: We apply \autoref{lemm:residual} with $X'=X-R$, and obtain that the residual scheme to $D'=D-(D\cap R)$ in $Z'=Z-R$ on $X'$ is empty, which means by \eqref{rem:R_X_e} that $D'=Z'$ as closed subschemes of $X'$.

\eqref{item:univ_res}: We apply \autoref{lemm:residual} with $D'=D \cap T$ and $Z'=X'=T$, and note that $R'=T$ by \eqref{rem:R_X}.
\end{proof}

\begin{lemma}
\label{lemm:regular}
Let $Z\to Y \to X$ be closed embeddings. Let $f\colon X' \to X$ be the blow-up of $Z$ in $X$, and $R$ the residual scheme to the exceptional divisor $f^{-1}Z$ in $f^{-1}Y$ on $X'$. The blow-up $Y'$ of $Z$ in $Y$ is naturally a closed subscheme of $R$. If the closed embedding $Z \to Y$ is regular, then $Y'=R$ as closed subschemes of $X'$.
\end{lemma}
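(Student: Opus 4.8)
Write $\Ic_Z$ and $\Ic_Y$ for the ideals of $Z$ and $Y$ in $\Oc_X$, so that $\Ic_Y \subseteq \Ic_Z$. On $X'$ the exceptional divisor $E=f^{-1}Z$ has the invertible ideal $\Ic_Z\Oc_{X'}$, the closed subscheme $f^{-1}Y$ has ideal $\Ic_Y\Oc_{X'}$, and by definition $\Ic_R\cdot(\Ic_Z\Oc_{X'})=\Ic_Y\Oc_{X'}$. The plan is to identify $Y'$ with the strict transform of $Y$, to deduce the inclusion $Y'\subseteq R$ from \autoref{lemm:univ_res}, and then, under the regularity hypothesis, to show that $E$ contains no associated point of $R$, so that $R$ coincides with the scheme-theoretic closure of its restriction to $X'-E$, which is exactly $Y'$.

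First I would recall that, since $Z\subseteq Y$, the blow-up $Y'\to Y$ of $Z$ in $Y$ is canonically the strict transform of $Y$: the inverse image ideal $\Ic_Z\Oc_{Y'}$ is invertible by the universal property of the blow-up, so $Y'\to Y\to X$ factors through a closed embedding $Y'\to X'$, whose image lies in $f^{-1}Y$ and agrees with $Y$ away from $E$. Moreover $f^{-1}Z\cap Y'$ is the effective Cartier divisor on $Y'$ cut out by $\Ic_Z\Oc_{Y'}$. Applying \autoref{lemm:univ_res}~\eqref{item:univ_res} to the closed subscheme $T=Y'$ of $f^{-1}Y$, with the effective Cartier divisor $f^{-1}Z$, yields $Y'\subseteq R$; this proves the first assertion.

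For the equality when $Z\to Y$ is regular, I would first observe, using \autoref{lemm:univ_res}~\eqref{item:isom_off_res} for the embeddings $f^{-1}Z\subseteq f^{-1}Y\subseteq X'$, that away from $E$ the scheme $R$ restricts to $f^{-1}Y\cap(X'-E)=Y-Z$, which is also the restriction of $Y'$. Since $Y'$ is the scheme-theoretic closure of $Y-Z$ in $X'$ and $\Ic_R\subseteq\Ic_{Y'}$, the inclusion $Y'\subseteq R$ becomes an equality as soon as $E$ contains no associated point of $R$, equivalently as soon as a local equation of $E$ is a nonzerodivisor on $\Oc_R$ (equivalently still, as soon as the saturation $(\Ic_Y\Oc_{X'}):(\Ic_Z\Oc_{X'})^{\infty}$ already equals $\Ic_R=(\Ic_Y\Oc_{X'}):(\Ic_Z\Oc_{X'})$): for then a section of $\Ic_{Y'}$ maps into $\Ic_R$ because its restriction to $X'-E$ does and $\Oc_R$ injects into its sections over $X'-E$.

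This nonzerodivisor statement is local on $X$, so I would pass to $X=\Spec A$ and compute in the standard affine charts of $X'$. Using the regularity of $Z\to Y$, I would choose elements $a_1,\dots,a_c\in\Ic_Z$ whose images generate the ideal of $Z$ in $\Oc_Y=A/\Ic_Y$ by a regular sequence, together with generators $b_1,\dots,b_r$ of $\Ic_Y$, so that $\Ic_Z=(a_1,\dots,a_c,b_1,\dots,b_r)$. In the chart dividing by some $b_j$ one finds $\Ic_Y\Oc_{X'}=\Ic_Z\Oc_{X'}$, whence $R$ is empty there and nothing is to be checked; in the chart dividing by some $a_i$, with local equation $a_i$ of $E$, one computes $\Ic_R=(b_1/a_i,\dots,b_r/a_i)$, and the required nonzerodivisor property of $a_i$ modulo $\Ic_R$ reduces to the identity $(\Ic_Z)^n\cap\Ic_Y=(\Ic_Z)^{n-1}\Ic_Y$ for all $n\ge1$. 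I expect this commutative-algebra identity to be the main obstacle, and it is the only place where regularity is genuinely used: because the associated graded ring of $\Oc_Y$ along a regular sequence generating the ideal of $Z$ is a polynomial ring over $\Oc_Z$, a degree-$n$ relation among the monomials in the $a_i$ taken modulo $\Ic_Y$ forces all its coefficients into $\Ic_Z$; feeding this back repeatedly, an induction on $n$ combined with the Krull intersection theorem yields the identity, and hence the equality $Y'=R$.
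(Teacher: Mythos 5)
Your derivation of the inclusion $Y'\subseteq R$ coincides with the paper's: one views the strict transform $Y'$ as a closed subscheme of $f^{-1}Y$ on which the exceptional divisor restricts to an effective Cartier divisor, and applies \autoref{lemm:univ_res}~\eqref{item:univ_res}. For the equality your packaging is different: you reduce to showing that the exceptional divisor contains no associated point of $R$ and compute in the affine charts of the blow-up, whereas the paper stays with the global $\Proj$ picture, identifying the homogeneous ideal of $R$ in the Rees algebra as $\bigoplus_n I\cdot J^{n-1}$ and sandwiching $\bigoplus_n J^n/(I\cdot J^{n-1})$ between $\Sym_{A/I}(J/I)$ and $\bigoplus_n (J/I)^n$, the composite being an isomorphism for a regular sequence by the cited theorem of Bourbaki. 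Both routes bottom out at the same identity $J^n\cap I=I\cdot J^{n-1}$ (here $I=\Ic_Y$, $J=\Ic_Z$), so the real difference is that the paper discharges this identity by citation while you attempt to prove it directly --- and that is where your argument has a gap.

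Quasi-regularity of $\bar a_1,\dots,\bar a_c$ in $A/I$ gives less than you need. Writing $x\in J^n\cap I$ as a degree-$n$ form $F(a)$ plus an element of $IJ^{n-1}$ (using $J^n=(a)^n+IJ^{n-1}$), the relation $F(\bar a)=0$ only puts the coefficients of $F$ into $J=(a)+I$; substituting and iterating yields $J^n\cap I\subseteq IJ^{n-1}+(J^m\cap I)$ for every $m\geq n$, hence an inclusion into $\bigcap_m\bigl(IJ^{n-1}+J^m\bigr)$. Krull's intersection theorem does \emph{not} identify this with $IJ^{n-1}$ in a general Noetherian ring: it only says that each element of $\bigcap_m J^m\,(A/IJ^{n-1})$ is annihilated by $1+j$ for some $j\in J$, and this intersection can be nonzero when $J$ is not contained in the Jacobson radical. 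To close the argument you must check the identity prime by prime: at $\mathfrak p\not\supseteq J$ both sides localize to $I_{\mathfrak p}$, and at $\mathfrak p\supseteq J$ the ideal $JA_{\mathfrak p}$ lies in the maximal ideal, quasi-regularity is preserved by localization, and Krull then applies. Alternatively, invoke directly the theorem that an ideal generated by a regular sequence is of linear type, i.e.\ $\Sym_{A/I}(J/I)\xrightarrow{\sim}\bigoplus_n(J/I)^n$ --- which is exactly the Bourbaki reference the paper uses. As written, the final step of your chart computation does not go through.
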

\begin{proof}
This can be extracted from \cite{Aluffi-Shadows}; let us nonetheless give a proof. Since the effective Cartier divisor $f^{-1}Z \to X'$ restricts to an effective Cartier divisor $(f^{-1}Z) \cap Y' \to Y'$ (namely, the exceptional divisor of $Y'$), and since $Y' \subset f^{-1}Y$, it follows from \autoref{lemm:univ_res}~\eqref{item:univ_res} that $Y' \subset R$ as closed subschemes of $X'$.

Since residual schemes (by \autoref{lemm:residual}) and blow-ups are compatible with open immersions, in order to prove the equality, we may assume that $X=\Spec A$. Then $Y$, resp.\ $Z$, is defined by the ideal $I$, resp.\ $J$, of $A$. The closed subscheme $f^{-1}Y$ of $X'$ is given by the homogeneous ideal $L=\bigoplus_{n \geq 0} I\cdot J^n$ of the Rees algebra $\bigoplus_{n \geq 0} J^n$. The closed subscheme $R$ of $X'$ is given by the homogeneous ideal $\bigoplus_{n \geq 0} I\cdot J^{n-1}$ (we write $J^{-1}=A$), since its degree $n$ component agrees with that of $L(-1)=\bigoplus_{n \geq 1} I\cdot J^{n-1}$ for $n\geq 1$. We have surjections of graded $A/I$-algebras ($\Sym$ denotes the symmetric algebra)
\[
\Sym_{A/I}(J/I) \to \bigoplus_{n\geq 0} J^n / (I\cdot J^{n-1}) \to \bigoplus_{n \geq 0} (J/I)^n.
\]
The map on the right induces the closed embedding $Y' \to R$. If $Z \to Y$ is a regular closed embedding, then the composite above is an isomorphism \cite[\S5, N$^\text{o}$2, Th\'eor\`eme~1, (i) $\Rightarrow$ (iii)]{Bou-AC-10}, whence the statement.
\end{proof}

\begin{notation}
\label{not:Rf}
Let $Y$ be a pure $G$-variety, and $f\colon Y \to X$ a $G$-morphism. We let $R_f$ be the residual scheme to $Y\inv$ in $f^{-1}(X\inv)$ on $Y$, and $\Rg{f}$ the scheme theoretic image of the composite $R_f \to Y \to Y/G$.
\end{notation}

\begin{lemma}
\label{lemm:reg_off_res}
Let $f\colon Y \to X$ be a $G$-morphism between pure $G$-varieties. Then the morphism  $\lambda_f \colon (f/G)^*\Lg{X} \to\Lg{Y}$ restricts to an isomorphism on $Y/G-\Rg{f}$.
\end{lemma}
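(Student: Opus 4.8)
The plan is to prove that $\lambda_f$ restricts to an isomorphism over the open subscheme $U = Y/G - \Rg{f}$, by reducing, after restriction, to the fixed-point-reflecting situation already handled in \autoref{prop:pure}. Since both $(f/G)^*\Lg{X}$ and $\Lg{Y}$ are invertible $\Oc_{Y/G}$-modules (by \autoref{prop:def_pure}\eqref{prop:def_pure:line}, pullback preserving local freeness of rank one), a morphism between them is an isomorphism exactly where it is surjective, so it suffices to produce an isomorphism over $U$. Set $Y_U = \varphi_Y^{-1}(U)$, an open $G$-subscheme of $Y$; it is again pure by \autoref{rem:pure_open}, and its quotient is $Y_U/G = U$. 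The key geometric input is that $Y_U$ is disjoint from $R_f$: indeed the composite $R_f \to Y \to Y/G$ factors through its scheme-theoretic image $\Rg{f}$ by definition (\autoref{not:Rf}), so the set-theoretic image of $R_f$ in $Y/G$ lies in $\Rg{f}$, whence $R_f \subseteq \varphi_Y^{-1}(\Rg{f}) = Y - Y_U$.

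The main step is to check that the restriction $f|_{Y_U} \colon Y_U \to X$ is FPR. Applying \autoref{lemm:univ_res}\eqref{item:isom_off_res} to the residual data of \autoref{not:Rf} (with $D = Y\inv$, $Z = f^{-1}(X\inv)$, ambient scheme $Y$, and $R = R_f$), the inclusion $Y\inv \to f^{-1}(X\inv)$ is an isomorphism off $R_f$. Restricting this isomorphism to the open subscheme $Y_U$, which avoids $R_f$ by the previous paragraph, yields an equality of closed subschemes $Y\inv \cap Y_U = f^{-1}(X\inv) \cap Y_U$, that is $(Y_U)\inv = (f|_{Y_U})^{-1}(X\inv)$. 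Thus $f|_{Y_U}$ is FPR. I expect this to be the only genuine obstacle: it is exactly the point where the defining property of the residual scheme $R_f$ and the control of its image $\Rg{f}$ in the quotient enter, everything else being formal.

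It remains to assemble the conclusion. Since $f|_{Y_U}$ is an FPR $G$-morphism between the pure $G$-varieties $Y_U$ and $X$, \autoref{prop:pure} shows that $\lambda_{f|_{Y_U}} \colon (f|_{Y_U}/G)^*\Lg{X} \to \Lg{Y_U}$ is an isomorphism. Finally, the formation of $\lambda$ is compatible with restriction to $G$-stable open subschemes: this follows from the functoriality of the diagram \eqref{diag:functorial} defining $\lambda_f$, together with \autoref{rem:Lg_open}, which identifies $\Lg{Y_U}$ with the restriction of $\Lg{Y}$ to $U = Y_U/G$. Under these identifications $\lambda_{f|_{Y_U}}$ is the restriction of $\lambda_f$ to $U$, so $\lambda_f$ is an isomorphism over $Y/G - \Rg{f}$, as required.
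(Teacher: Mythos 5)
Your proof is correct and follows essentially the same route as the paper's: both rest on \autoref{lemm:univ_res}~\eqref{item:isom_off_res} to see that $Y\inv \to f^{-1}(X\inv)$ is an isomorphism off $R_f$, the inclusion $R_f \subset \varphi_Y^{-1}\Rg{f}$, and then \autoref{prop:pure} applied to the (FPR, pure) restriction over $Y/G-\Rg{f}$, together with \autoref{rem:Lg_open}, \autoref{rem:fixed} and \autoref{rem:pure_open}. You have merely made explicit the steps the paper leaves implicit.
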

\begin{proof}
Applying \autoref{lemm:univ_res}~\eqref{item:isom_off_res} we see that $Y\inv \to f^{-1}(X\inv)$ is an isomorphism off $R_f$. Since $R_f \subset \varphi_Y^{-1}\Rg{f}$ as closed subschemes of $Y$, the statement follows from \autoref{prop:pure} (and \autoref{rem:Lg_open}, \autoref{rem:fixed}, \autoref{rem:pure_open}).
\end{proof}

\begin{lemma}
\label{lemm:res_is_pure}
Let $f\colon Y \to X$ be a $G$-morphism. Assume that $Y\inv \to f^{-1}(X\inv)$ is a regular closed embedding, and let $y\colon Y' \to Y$ be the blow-up of $Y\inv$ in $Y$. Then the $G$-varieties $Y'$ and $R_{f \circ y}$ are pure.
\end{lemma}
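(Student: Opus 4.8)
The plan is to deduce both purity assertions from \autoref{prop:blowup_is_pure}~\eqref{prop:blowup_is_pure:inv}, applied to two different $G$-varieties, after identifying $R_{f\circ y}$ with the blow-up of a fixed locus by means of \autoref{lemm:regular}.

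For $Y'$: by \autoref{lemm:anti:fix} the fixed locus $Y\inv$ is an admissible closed subscheme of $Y$, so \autoref{prop:blowup_is_pure}~\eqref{prop:blowup_is_pure:inv}, applied with $Z=Y\inv$, endows $Y'$ with a $G$-action making $y$ a $G$-morphism, shows that $y$ is FPR with $(Y')\inv = y^{-1}(Y\inv)$ the exceptional divisor, and yields that $Y'$ is pure.

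For $R_{f\circ y}$: set $W=f^{-1}(X\inv)$. This is a closed $G$-subscheme of $Y$, being $G$-invariant as $f$ is a $G$-morphism and $X\inv$ is a $G$-subscheme of $X$. By \autoref{rem:fixed} the inclusion $W \to Y$ is FPR, so $W\inv = W \cap Y\inv = Y\inv$, the last equality because $Y\inv \subset W$. I would then apply \autoref{lemm:regular} to the closed embeddings $Y\inv \to W \to Y$, blowing up $Y\inv$ in $Y$. Since $(Y')\inv = y^{-1}(Y\inv)$ is the exceptional divisor and $(f\circ y)^{-1}(X\inv)=y^{-1}(W)$, the residual scheme appearing in that lemma is exactly $R_{f\circ y}$ (see \autoref{not:Rf}). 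As the embedding $Y\inv \to W$ is regular by hypothesis, \autoref{lemm:regular} identifies $R_{f\circ y}$, as a closed subscheme of $Y'$, with the blow-up of $Y\inv=W\inv$ in $W$. Applying \autoref{prop:blowup_is_pure}~\eqref{prop:blowup_is_pure:inv} to the $G$-variety $W$ (again via \autoref{lemm:anti:fix}) shows that this blow-up is a pure $G$-variety.

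The one delicate point, which I expect to be the main obstacle, is to verify that the pure $G$-structure furnished by \autoref{prop:blowup_is_pure} on the blow-up of $W\inv$ in $W$ agrees with the $G$-action that $R_{f\circ y}$ inherits as a closed subscheme of $Y'$. First, $R_{f\circ y}$ is $G$-invariant in $Y'$: both $(Y')\inv$ and $(f\circ y)^{-1}(X\inv)$ are $G$-invariant, so the defining ideal equation of the residual scheme is preserved by the involution, and uniqueness of the residual scheme forces $R_{f\circ y}$ to be $G$-invariant. Next, the composite $R_{f\circ y} \hookrightarrow Y' \xrightarrow{y} Y$ factors through $W$ as the blow-up projection, by the construction in \autoref{lemm:regular}; this projection is a $G$-morphism for both $G$-structures, so the uniqueness clause of \autoref{prop:blowup_is_pure} forces the two $G$-actions to coincide. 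Hence $R_{f\circ y}$ is pure.
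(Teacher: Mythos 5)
Your proof is correct and follows essentially the same route as the paper's: purity of $Y'$ via \autoref{lemm:anti:fix} and \autoref{prop:blowup_is_pure}~\eqref{prop:blowup_is_pure:inv}, and the identification of $R_{f\circ y}$ with the blow-up of $Y\inv=W\inv$ in $W=f^{-1}(X\inv)$ via \autoref{lemm:regular}, followed by another application of \autoref{prop:blowup_is_pure}~\eqref{prop:blowup_is_pure:inv}. The paper's proof is terser, identifying that blow-up with a closed $G$-subscheme of $Y'$ directly by the universal property; your explicit check that the two $G$-structures on $R_{f\circ y}$ agree is a point the paper leaves implicit, and it is handled correctly.
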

\begin{proof}
By \autoref{prop:blowup_is_pure} and \autoref{lemm:anti:fix}, the closed subscheme $(Y')\inv$ of $Y'$ is the exceptional divisor, and in particular $Y'$ is pure. The blow-up $B$ of $Y\inv$ in $f^{-1}(X\inv)$ may be identified with a closed $G$-subscheme of $Y'$, by the universal property of the blow-up. By \autoref{lemm:regular}, we have $R_{f \circ y}=B$ as closed subschemes of $Y'$, and therefore as $G$-varieties. But $B$ is pure by \autoref{prop:blowup_is_pure}~\eqref{prop:blowup_is_pure:inv}.
\end{proof}

\begin{lemma}
\label{lemm:factors}
Let $f\colon Y \to X$ be a $G$-morphism, with $Y$ a pure $G$-variety. Then
\[ \xymatrix{
R_f\ar[r] \ar[d]_{\rho} & Y \ar[d]^{\varphi_Y} \\ 
\Rg{f} \ar[r] & Y/G
}\]
is a cartesian square, and the morphism $R_f \to X\inv$ factors through $\rho$.
\end{lemma}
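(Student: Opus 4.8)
The plan is to exhibit $R_f$ as the preimage under $\varphi_Y$ of a closed subscheme of $Y/G$; both assertions of the lemma then follow, the factorisation after a short equivariance computation. Since $Y$ is pure, $\Lg{Y}$ is an invertible $\Oc_{Y/G}$-module (\autoref{prop:def_pure}), so $\im\lambda_f = \Jc\cdot\Lg{Y}$ for a unique coherent ideal $\Jc\subseteq\Oc_{Y/G}$, and the key step is the identity $\Ic_{R_f} = \Jc\Oc_Y$. To prove it, I start from $\Ic_{f^{-1}(X\inv)} = \Ic_{X\inv}\Oc_Y$ and use \autoref{lemm:altdef_fixed} to write $\Ic_{X\inv}$ as the image of $\varphi_X^*\Lg{X}\to\Oc_X$; pulling back along $f$ and invoking right-exactness, $\Ic_{f^{-1}(X\inv)}$ is the image of $f^*\varphi_X^*\Lg{X}\to\Oc_Y$. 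By \autoref{lemm:lambda_f} this map factors through $\varphi_Y^*(\lambda_f)$ followed by the map $a\colon\varphi_Y^*\Lg{Y}\to\Oc_Y$ of \autoref{lemm:altdef_fixed}, whose image is $\Ic_{Y\inv}$ and which is injective because $Y$ is pure (\autoref{prop:def_pure}). As $\im\lambda_f = \Jc\Lg{Y}$, this gives $\Ic_{f^{-1}(X\inv)} = a(\varphi_Y^*\im\lambda_f) = (\Jc\Oc_Y)\cdot\Ic_{Y\inv}$. Comparing with the defining relation $\Ic_{f^{-1}(X\inv)} = \Ic_{R_f}\cdot\Ic_{Y\inv}$ of the residual scheme and cancelling the invertible ideal $\Ic_{Y\inv}$ yields $\Ic_{R_f} = \Jc\Oc_Y$.

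This identity says precisely that $R_f = \varphi_Y^{-1}(W)$, where $W\subseteq Y/G$ is the closed subscheme defined by $\Jc$. Since $\varphi_Y$ is faithfully flat (\autoref{prop:def_pure} and \S\ref{sect:rank}), $W$ is recovered as the scheme-theoretic image of its preimage $R_f$; hence $\Rg{f} = W$ and the square is cartesian.

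For the factorisation I would argue locally, taking $X = \Spec C$ with $C$ carrying the involution $\sigma$, and $Y = \Spec B$, $Y/G = \Spec A$ with $A = B^G$; here $\Lg{Y} = \im(\id-\sigma)$ sits in an exact sequence $0\to A\to B\xrightarrow{\id-\sigma}\Lg{Y}\to 0$. As $f$ is a $G$-morphism, $f^\#\colon C\to B$ is $\sigma$-equivariant, so for every $c\in C$ one has $(\id-\sigma)(f^\# c) = f^\#((\id-\sigma)c)\in f^\#(\Lg{X})\subseteq\im\lambda_f = \Jc\Lg{Y}$. Read through $B/A\simeq\Lg{Y}$, this means $f^\#(c)\in A+\Jc B$. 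Hence the morphism $R_f\to X\inv$, given on rings by $C/\Ic_{X\inv}\to B/\Jc B$, lands in the subring $A/\Jc = \Oc_{\Rg{f}}\hookrightarrow B/\Jc B = \Oc_{R_f}$, i.e.\ factors through $\rho$. Being unique (as $A/\Jc\to B/\Jc B$ is injective by faithful flatness), these local factorisations glue to the required morphism $\Rg{f}\to X\inv$.

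The crux, and essentially the only place purity is used, is the identity $\Ic_{R_f}=\Jc\Oc_Y$ exhibiting $R_f$ as pulled back from $Y/G$; granted this, cartesianness is formal and the factorisation is the computation above. The one thing to watch is that everything stays characteristic-free: in particular $\id-\sigma$ has image in $\Lg{Y}$ — which in characteristic two lies inside the invariant subring $A$ — and no step divides by two.
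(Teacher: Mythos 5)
Your argument is correct, but it takes a genuinely different route from the paper's. The paper first treats the case where $R_f$ is itself a pure $G$-variety: there \autoref{cor:closed} and \autoref{prop:pure} give the cartesian square by faithfully flat descent, and the factorisation comes from the universal property of the $G$-quotient; the general case is then reduced to this one by factoring $f$ through the graph embedding $Y \to Y\times X$, blowing up $(Y\times X)\inv$ to produce a morphism $g$ with $R_g$ pure (\autoref{lemm:res_is_pure}), and identifying $R_f = Y\cap R_g$ via \autoref{lemm:residual}. You instead compute the ideal of $R_f$ directly: writing $\im\lambda_f=\Jc\,\Lg{Y}$ for a coherent ideal $\Jc$ of $\Oc_{Y/G}$ (possible since $\Lg{Y}$ is invertible by \autoref{prop:def_pure}), the chain \autoref{lemm:altdef_fixed} $\to$ \autoref{lemm:lambda_f} $\to$ \autoref{prop:def_pure}~\eqref{prop:def_pure:seq} gives $\Ic_{f^{-1}(X\inv)}=(\Jc\Oc_Y)\cdot\Ic_{Y\inv}$, and cancelling the invertible ideal $\Ic_{Y\inv}$ yields $\Ic_{R_f}=\Jc\Oc_Y$. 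This is more informative than what the lemma asserts: it exhibits $R_f$ as the pullback of an explicit closed subscheme of $Y/G$, so the cartesian square and the identification $\Rg{f}=V(\Jc)$ follow from faithful flatness of $\varphi_Y$ alone, the auxiliary blow-up of $Y\times X$ is avoided entirely, and \autoref{lemm:reg_off_res} drops out as a byproduct (off $V(\Jc)$ one has $\Jc=\Oc_{Y/G}$, so $\lambda_f$ is surjective, hence an isomorphism when $X$ is also pure). Two routine points deserve a line each but are not gaps: in the factorisation step you should justify the affine local model (one needs affine open $G$-subschemes of $X$ receiving the $\varphi_Y$-preimages of an affine cover of $Y/G$; quasi-projectivity provides these exactly as at the start of \S\ref{sect:Gquotients}, and your uniqueness remark then glues the local factorisations), and the identity $(\id-\sigma)^{-1}(\Jc\Lg{Y})=A+\Jc B$ should be spelled out via $(\id-\sigma)(\Jc B)=\Jc\Lg{Y}$ ($A$-linearity of $\id-\sigma$) together with $\ker(\id-\sigma)=A$.
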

\begin{proof}
Assume first that $R_f$ is pure. Then the square is cartesian and the morphism $R_f/G \to \Rg{f}$ is an isomorphism by \autoref{cor:closed} and \autoref{prop:pure}. Since the $G$-action on $X\inv$ is trivial, the $G$-morphism $R_f \to X\inv$ factors through the $G$-quotient map $\rho$.

When $R_f$ is possibly not pure, the $G$-morphism $f$ factors as $Y \to Y\times X \to X$. Let $B$ be the blow-up of $(Y\inv) \times (X\inv) = (Y\times X)\inv$ in $Y\times X$, and $g \colon B \to X$ the induced morphism. By the universal property of the blow-up, we may view $Y$ as a closed $G$-subscheme of $B$. We have $Y\cap R_g = R_f$ by \autoref{lemm:residual}. Since the morphism $(Y \times X)\inv \to Y \times (X\inv)$ is an effective Cartier divisor, it is a regular closed embedding, hence $R_g$ is pure by \autoref{lemm:res_is_pure}. By the case treated above, we have a commutative diagram
\[ \xymatrix{
R_f\ar[r] \ar[d] & R_g \ar[d] \ar[r] & X\inv\\ 
\Rg{f} \ar[r] & \Rg{g} \ar[ru] &
}\]
proving the second statement. Let $\Rgg{f} = \Rg{f} \times_{Y/G} Y$. As closed subschemes of $B$, we have $R_f \subset \Rgg{f}$, and $\Rgg{f} \subset \Rg{g} \times_{B/G} B = R_g$ (by the pure case above). Since $\Rgg{f} \subset Y$ and $Y\cap R_g = R_f$, this proves that $R_f=\Rgg{f}$. 
\end{proof}

\section{The invariant of an involution}
\subsection{The cycle class \texorpdfstring{$\Std{Y}$}{P}}
\label{sect:P}
Let $Y$ be a pure $G$-variety. We denote by $L_Y$ the line bundle on $Y/G$ whose $\Oc_{Y/G}$-module of sections is the dual of $\Lg{Y}$. 

By \autoref{prop:def_pure}~\eqref{prop:def_pure:seq} we have $\Oc_Y(Y\inv)=\varphi_Y^*L_Y$, and if $Y$ is equidimensional, then by \cite[Proposition 2.6 (b) (d)]{Ful-In-98}, we have
\begin{equation}
\label{eq:c1L}
c_1(\varphi_Y^*L_Y)[Y] = [Y\inv] \in \CH(Y).
\end{equation}

We denote by $\Hg{Y}$ the restriction of $\varphi_Y^*L_Y$ to the closed subscheme $Y\inv$ of $Y$, and define
\begin{equation}
\label{def:Std}
\Std{Y} =c(-\Hg{Y})[Y\inv]= \sum_{n \geq 0} (-1)^n \cdot c_1(\Hg{Y})^n[Y\inv] \in \CH(Y\inv).
\end{equation}

\begin{lemma}
\label{lemm:deg_Greg}
Let $f\colon Y \to X$ be a proper $G$-morphism between equidimensional, pure $G$-varieties. Assume that $f$ has a degree, and is FPR. Then
\[
(f\inv)_*\Std{Y} = \deg f \cdot \Std{X} \in \CH(X\inv).
\]
\end{lemma}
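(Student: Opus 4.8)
The plan is to reduce to a clean intersection-theoretic identity on the quotient $X/G$, using the key structural fact (\autoref{prop:pure}) that an FPR morphism between pure $G$-varieties is the pullback of its quotient morphism along $\varphi_X$. The target $\Std{X}$ lives in $\CH(X\inv)$, but since $f$ is FPR and $Y,X$ are pure, the fixed loci are effective Cartier divisors that interact well with the quotients. First I would record that the line bundle $H_Y$ on $Y\inv$ is the restriction of $\varphi_Y^*L_Y$, and that by \autoref{prop:pure} the isomorphism $\lambda_f$ identifies $(f/G)^*L_X$ with $L_Y$; hence $H_Y$ is the pullback of $H_X$ along $f\inv\colon Y\inv\to X\inv$. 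This is the crucial compatibility: it turns $\Std{Y}=c(-H_Y)[Y\inv]$ into $c(-(f\inv)^*H_X)[Y\inv]$, so the Chern class factor pulls back from $X\inv$.

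The next step uses the projection formula. Since $H_Y=(f\inv)^*H_X$, for each $n$ we have $c_1(H_Y)^n[Y\inv]=(f\inv)^*(c_1(H_X)^n)\cap[Y\inv]$, and pushing forward,
\[
(f\inv)_*\bigl(c_1(H_Y)^n[Y\inv]\bigr)=c_1(H_X)^n\cap (f\inv)_*[Y\inv].
\]
So everything reduces to computing $(f\inv)_*[Y\inv]$ in $\CH(X\inv)$ and showing it equals $\deg f\cdot[X\inv]$. Summing the identity over $n$ with signs then yields $(f\inv)_*\Std{Y}=c(-H_X)\cap(\deg f\cdot[X\inv])=\deg f\cdot\Std{X}$, as desired. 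Thus the whole statement collapses to the degree computation for the fixed divisor.

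The heart of the matter, and the main obstacle, is therefore establishing $(f\inv)_*[Y\inv]=\deg f\cdot[X\inv]$. I would prove this by invoking \autoref{lemm:deg_Cartier}: since $Y,X$ are pure and $f$ is FPR, the square with horizontal arrows $Y\inv\to Y$ and $X\inv\to X$ is cartesian (as $f^{-1}(X\inv)=Y\inv$) with both horizontal arrows effective Cartier divisors, $f$ is proper with a degree, and $Y,X$ are equidimensional. \autoref{lemm:deg_Cartier} then gives exactly that $f\inv$ has degree $\deg f$, i.e. $(f\inv)_*[Y\inv]=\deg f\cdot[X\inv]$ in $\Zo_{\dim X-1}(X\inv)$, hence in $\CH(X\inv)$. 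One subtlety to check is that the equality $(f\inv)^*H_X=H_Y$ holds on the nose as line bundles on $Y\inv$, rather than merely after restriction to some open set; this follows from \autoref{prop:pure} giving an honest isomorphism $\lambda_f$ and from the definition of $H_Y$ as $\varphi_Y^*L_Y|_{Y\inv}$ together with the cartesian square $\varphi_Y^*=f^*\varphi_X^*$ there. With that bookkeeping in place, the projection formula and \autoref{lemm:deg_Cartier} combine to give the result.
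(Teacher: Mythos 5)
Your proposal is correct and follows essentially the same route as the paper's proof: identify $H_Y$ with $(f\inv)^*H_X$ via \autoref{prop:pure}, apply \autoref{lemm:deg_Cartier} to get $(f\inv)_*[Y\inv]=\deg f\cdot[X\inv]$, and conclude by the projection formula. The subtleties you flag (the cartesian square of fixed loci and the on-the-nose identification of line bundles) are exactly the points the paper relies on.
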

\begin{proof}
By \autoref{prop:pure}, we have $(f/G)^*L_X = L_Y$ and therefore $(f\inv)^*H_X = H_Y$. The morphism $f\inv$ has degree $\deg f$ by \autoref{lemm:deg_Cartier}. Thus, in $\CH(X\inv)$
\[
(f\inv)_* \circ c(-\Hg{Y})[Y\inv] =  c(-\Hg{X}) \circ (f\inv)_* [Y\inv] = \deg f \cdot c(-\Hg{X})[X\inv].\qedhere
\]
\end{proof}

\begin{lemma}
\label{lemm:main}
Let $f\colon Y\to X$ be a proper $G$-morphism between equidimensional, pure $G$-varieties. Assume that $f$ has a degree and that $f^{-1}(X\inv) \to Y$ is an effective Cartier divisor. Then
\[
(f\inv)_*\Std{Y} = \deg f \cdot \Std{X} \in \CH(X\inv)/2.
\]
\end{lemma}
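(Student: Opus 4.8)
The plan is to compare $\Std{Y}$, which is the Segre class $\sum_{n\ge 0}(-1)^n c_1(\Hg{Y})^n[Y\inv]$ of the Cartier divisor $Y\inv$ in $Y$, with the analogous class attached to the larger Cartier divisor $E=f^{-1}(X\inv)$, and to show that after pushing forward to $X\inv$ the two agree up to the contribution of the residual scheme $R_f$, which I claim is even. Unlike the FPR case (\autoref{lemm:deg_Greg}), where the correction is absent and one gets an integral equality, here the $R_f$-contribution does not vanish on the nose, and it is precisely its two-divisibility that forces the congruence to hold only modulo two.

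First I would record the geometry. Since $X$ is pure, $X\inv\to X$ is an effective Cartier divisor, and since $Y$ is pure with $E=f^{-1}(X\inv)\to Y$ an effective Cartier divisor by hypothesis, \autoref{lemm:univ_res}~\eqref{rem:R_Cartier} gives that $R_f\to Y$ is an effective Cartier divisor with $E=Y\inv+R_f$, hence $[E]=[Y\inv]+[R_f]$ as cycles. The morphism $f$ restricts to $\bar f\colon E\to X\inv$ inside the cartesian square defining $E$, whose horizontal arrows are Cartier divisors, so $\bar f$ has degree $\deg f$ by \autoref{lemm:deg_Cartier}. The normal bundle of $E$ is $\Oc_Y(E)|_E=(f^*\varphi_X^*L_X)|_E=\bar f^*\Hg{X}$, so setting $\Sigma_E=c(-\Oc_Y(E)|_E)[E]=\sum_{n\ge 0}(-1)^nc_1(\bar f^*\Hg X)^n[E]$, the projection formula and $\bar f_*[E]=\deg f\cdot[X\inv]$ give the integral identity
\[
\bar f_*\Sigma_E=\deg f\cdot c(-\Hg X)[X\inv]=\deg f\cdot\Std X\in\CH(X\inv).
\]

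Then I would compare $\Sigma_E$ with $\Std Y$ inside $\CH(E)$. Writing $a\colon Y\inv\hookrightarrow E$ and $b\colon R_f\hookrightarrow E$ for the inclusions and $\ell=c_1(\Oc_Y(Y\inv))$, $m=c_1(\Oc_Y(R_f))$, $d=c_1(\Oc_Y(E))=\ell+m$ for the operators involved, the relation $[E]=[Y\inv]+[R_f]$ and the projection formula yield
\[
\Sigma_E-a_*\Std Y=\frac{-m}{(1+d)(1+\ell)}\cap[Y\inv]+\frac{1}{1+d}\cap[R_f].
\]
The first summand is supported on $Y\inv\cap R_f$, and by commutativity of intersection with the Cartier divisors $Y\inv$ and $R_f$ one has $m\cap[Y\inv]=\ell\cap[R_f]$ there; after this substitution the two summands combine into a single class carried by $R_f$,
\[
\Sigma_E-a_*\Std Y=b_*\gamma,\qquad \gamma=\frac{1}{(1+d)(1+\ell)}\cap[R_f]\in\CH(R_f).
\]
Since $\bar f\circ a=f\inv$, applying $\bar f_*$ and setting $p=\bar f\circ b\colon R_f\to X\inv$ would give $(f\inv)_*\Std Y=\deg f\cdot\Std X-p_*\gamma$, reducing the whole statement to the evenness of $p_*\gamma$.

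This last step is the heart of the matter and the main obstacle. By \autoref{lemm:factors} the morphism $p$ factors as $R_f\xrightarrow{\rho}\Rg f\xrightarrow{\psi}X\inv$, where $\rho$ is the base change of $\varphi_Y$ along $\Rg f\to Y/G$ and is therefore flat of rank two (\autoref{prop:def_pure}~\eqref{prop:def_pure:flat} and \S\ref{sect:rank}~\eqref{rank:bc}). The key point is that both line bundles entering $\gamma$ restrict to $\rho$-pullbacks on $R_f$: since $R_f\to X$ factors through $p$ one gets $d|_{R_f}=\rho^*\psi^*c_1(\Hg X)$, and since the cartesian square of \autoref{lemm:factors} shows $\varphi_Y|_{R_f}$ factors through $\rho$ one gets $\ell|_{R_f}=\rho^*c_1(L_Y|_{\Rg f})$. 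Thus every monomial of $\gamma$ is a $\rho$-pullback capped with $[R_f]$, and the projection formula together with $\rho_*[R_f]=\rho_*\rho^*[\Rg f]=2[\Rg f]$ (from \S\ref{sect:rank}~\eqref{rank:deg2}) shows $\rho_*\gamma\in 2\,\CH(\Rg f)$; hence $p_*\gamma=\psi_*\rho_*\gamma$ is even and the congruence follows. I expect the rewriting in the previous paragraph—pushing the $Y\inv$-supported part of the correction onto $R_f$—to be the delicate point, since it is exactly what lets the correction be pushed along the rank-two map $\rho$ rather than along its restriction to the fixed locus, where the two-divisibility would be lost.
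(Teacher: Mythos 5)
Your proof is correct, and its skeleton is the paper's: the decomposition $f^{-1}(X\inv)=Y\inv+R_f$ as Cartier divisors, and the evenness of any class supported on $R_f$ that is pulled back along the rank-two flat cover $\rho\colon R_f\to\Rg{f}$ of \autoref{lemm:factors} --- this is the real content in both versions. Where you diverge is the intermediate bookkeeping. The paper works with the two operators $c(-\varphi_Y^*L_Y)$ and $c(-f^*\varphi_X^*L_X)$ on $\CH(Y)$, splits the Gysin map $q^*$ of the divisor $f^{-1}(X\inv)$ as $i_*\circ r^*+j_*\circ\gamma_Y^*$, and then needs \autoref{lemm:reg_off_res} together with the localisation sequence on $Y/G$ to see that the difference of the two operators applied to $[Y]$ lies in $r_*\circ\rho^*\CH(\Rg{f})$. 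You instead compare $\Sigma_E=c(-\bar f^*\Hg{X})[E]$ directly with the pushforward of $\Std{Y}$ and use Fulton's commutativity of intersections with Cartier divisors ($R_f\cdot[Y\inv]=Y\inv\cdot[R_f]$ on $Y\inv\cap R_f$) to collapse the entire discrepancy into the single class $\gamma=\bigl((1+d)(1+\ell)\bigr)^{-1}\cap[R_f]$. This bypasses \autoref{lemm:reg_off_res} and the localisation argument altogether and makes the correction term explicit, at the cost of invoking the commutativity theorem, which the paper's route does not need; both proofs then finish identically via $\rho_*\circ\rho^*=2$ and the factorisation of $R_f\to X\inv$ through $\rho$.
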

\begin{proof}
When $d\colon D \to Z$ is an effective Cartier divisor, we will denote by $d^*\colon \CH(Z) \to \CH(D)$ the Gysin map defined in \cite[\S2.6]{Ful-In-98}. For any $G$-variety $T$, we write $\gamma_T$ for the closed embedding $T\inv \to T$. 

Consider the commutative diagram (we use \autoref{not:Rf})
\[ \xymatrix{
R_f\ar[rrd]_h\ar[rr]_i\ar@/^2pc/[rrrr]^r&&f^{-1}(X\inv)\ar[rr]_q \ar[d]_g && Y \ar[d]^f \\ 
&&X\inv \ar[rr]^{\gamma_X} && X
}\]
By \autoref{lemm:univ_res}~\eqref{rem:R_Cartier}, the morphism $r$ is an effective Cartier divisor and we have $R_f + Y\inv = f^{-1}(X\inv)$, as Cartier divisors on $Y$. By \cite[Proposition~2.3~(b)]{Ful-In-98} and definition of the Gysin map, we have, as morphisms $\CH(Y) \to \CH(f^{-1}(X\inv))$,
\[
i_* \circ r^* + j_* \circ \gamma_Y^*= q^*,
\]
where $j\colon Y\inv \to f^{-1}(X\inv)$ is the closed embedding. Composing with $g_*$, we obtain, as morphisms $\CH(Y) \to \CH(X\inv)$,
\begin{equation}
\label{eq:sumcartier}
h_* \circ r^* + (f\inv)_* \circ \gamma_Y^*= g_* \circ q^*.
\end{equation}

By \autoref{lemm:factors} we have a cartesian square
\begin{equation}
\label{squ:R}
\begin{gathered}
\xymatrix{
R_f\ar[r]^r \ar[d]_\rho & Y \ar[d]^{\varphi_Y} \\ 
\Rg{f} \ar[r]^s & Y/G
}
\end{gathered}
\end{equation}
hence by \autoref{prop:def_pure} \eqref{prop:def_pure:flat} and \S\ref{sect:rank} \eqref{rank:bc} the morphism $\rho\colon R_f \to \Rg{f}$ is flat of rank two, and the endomorphism $\rho_*\circ \rho^*$ of $\Zo(\Rg{f})$ is multiplication by two (see \S\ref{sect:rank}~\eqref{rank:deg2}). By \autoref{lemm:factors}, the morphism $h$ factors through $\rho$. It follows that
\begin{equation}
\label{eq:imhrho}
h_* \circ \rho^*\CH(\Rg{f}) \subset 2\CH(X\inv).
\end{equation}

Since $Y$ and $X$ are equidimensional, we have by \cite[Proposition 2.6 (d),(e)]{Ful-In-98}
\begin{equation}
\label{eq:std_xi}
\begin{array}{l}
\Std{Y}=\gamma_Y^* \circ c(-\varphi_Y^*L_Y)[Y] \in \CH(Y\inv),\\
\Std{X}=\gamma_X^* \circ c(-\varphi_X^*L_X)[X] \in \CH(X\inv).
\end{array}
\end{equation}

Using once more \cite[Proposition 2.6 (d)]{Ful-In-98}, we have in $\CH(X\inv)$ 
\begin{align*}
h_* \circ r^*\circ c(-\varphi_Y^*L_Y)[Y] 
&= h_* \circ c(-r^* \varphi_Y^*L_Y) \circ r^*[Y] \\ 
&= h_* \circ c(-r^* \varphi_Y^*L_Y) [R_f] \\
&= h_* \circ c(-\rho^* s^*L_Y) [R_f] \\
&= h_* \circ c(-\rho^* s^*L_Y) \circ \rho^*[\Rg{f}] \\
&= h_* \circ \rho^* \circ c(-s^*L_Y)[\Rg{f}],
\end{align*}
and using \eqref{eq:imhrho}, we obtain
\begin{equation}
\label{eq:qr}
h_* \circ r^*\circ c(-\varphi_Y^*L_Y)[Y] \in 2 \CH(X\inv).
\end{equation}

On the open complement of $\Rg{f}$ in $Y/G$, the line bundles $(f/G)^*L_X$ and $L_Y$ are isomorphic by \autoref{lemm:reg_off_res}. By the localisation sequence, it follows that $(c(-L_Y) - c(-(f/G)^*L_X))[Y/G] \in s_*\CH(\Rg{f})$, and 
\begin{align*}
(c(-\varphi_Y^*L_Y) - c(-f^*\varphi_X^*L_X))[Y]
&= (c(-\varphi_Y^*L_Y) - c(-f^*\varphi_X^*L_X)) \circ \varphi_Y^*[Y/G]\\
&= (c(-\varphi_Y^*L_Y) - c(-\varphi_Y^*(f/G)^*L_X)) \circ \varphi_Y^*[Y/G]\\
&= \varphi_Y^*\circ (c(-L_Y) - c(-(f/G)^*L_X))[Y/G]\\
&\in \varphi_Y^* \circ s_*\CH(\Rg{f}).
\end{align*}
In view of the cartesian square \eqref{squ:R}, we have $\varphi_Y^* \circ s_* = r_*\circ \rho^*$, so that
\begin{equation}
\label{eq:xixi}
(c(-\varphi_Y^*L_Y) - c(-f^*\varphi_X^*L_X))[Y] \in r_*\circ \rho^*\CH(\Rg{f}).
\end{equation}

Since $\Oc_Y(f^{-1}(X\inv)) \simeq f^*\varphi_X^*L_X$, we have, as morphisms $\CH(\Rg{f}) \to \CH(X\inv)$,
\begin{align*}
 g_* \circ q^* \circ r_*\circ \rho^* 
 &= g_* \circ q^* \circ q_* \circ i_* \circ \rho^* \\ 
 &= g_* \circ c_1(q^*f^*\varphi_X^*L_X) \circ i_* \circ \rho^* &&\text{by \cite[Proposition~2.6 (c)]{Ful-In-98}}\\
 &= g_* \circ i_* \circ c_1(i^*q^*f^*\varphi_X^*L_X) \circ \rho^*\\
 &= g_* \circ i_* \circ c_1(\rho^*s^*(f/G)^*L_X) \circ \rho^*\\
 &= g_* \circ i_* \circ \rho^* \circ c_1(s^*(f/G)^*L_X)\\
 &= h_* \circ \rho^* \circ c_1(s^*(f/G)^*L_X),
\end{align*} 
it follows from \eqref{eq:imhrho} that
\begin{equation}
\label{eq:gqrrho}
g_* \circ q^* \circ r_*\circ \rho^*\CH(\Rg{f}) \subset 2 \CH(X\inv).
\end{equation}

Combining \eqref{eq:xixi} and \eqref{eq:gqrrho}, we obtain, in $\CH(X\inv)/2$,
\begin{equation}
\label{eq:qxi}
g_* \circ q^* \circ c(-\varphi_Y^*L_Y)[Y] = g_* \circ q^* \circ c(-f^*\varphi_X^*L_X)[Y].
\end{equation}

Finally, we compute, in $\CH(X\inv)/2$,
\begin{align*}
(f\inv)_*\Std{Y}
&=(f\inv)_* \circ \gamma_Y^* \circ c(-\varphi_Y^*L_Y)[Y] && \text{by \eqref{eq:std_xi}}\\
&=g_* \circ q^*\circ c(-\varphi_Y^*L_Y)[Y] - h_* \circ r^* \circ c(-\varphi_Y^*L_Y)[Y] && \text{by \eqref{eq:sumcartier}}\\ 
 &= g_* \circ q^*\circ c(-\varphi_Y^*L_Y)[Y]&& \text{by \eqref{eq:qr}}\\
 &= g_* \circ q^*\circ c(-f^*\varphi_X^*L_X)[Y]&& \text{by \eqref{eq:qxi}}\\
 &= \gamma_X^* \circ f_* \circ c(-f^*\varphi_X^*L_X)[Y] && \text{by \cite[Prop.~2.3 (c)]{Ful-In-98}}\\
 &= \gamma_X^* \circ c(-\varphi_X^*L_X) \circ f_*[Y]\\
 &= \deg f \cdot \gamma_X^* \circ c(-\varphi_X^*L_X)[X]\\
 &=\deg f \cdot \Std{X}&& \text{by \eqref{eq:std_xi}}.\qedhere
\end{align*} 
\end{proof}

\subsection{The cycle class \texorpdfstring{$\St{Y}$}{S}}
\label{sect:S}
We endow the variety $A=\Au \times \Au$ with the $G$-action by exchange. Then $A\inv$ is the diagonal $\Au$ (\autoref{ex:product}). Let $Y$ be a $G$-variety, and consider the $G$-variety $Y \times A$. We have $(Y \times A)\inv=Y\inv \times \Au$ (\autoref{rem:product}). Let $b_Y \colon P(Y) \to Y \times A$ be the blow-up of $(Y \times A)\inv$ in $Y \times A$. By \autoref{prop:blowup_is_pure} and \autoref{lemm:anti:fix}, the $G$-variety $P(Y)$ is pure and the $G$-morphism $b_Y$ is FPR.

Since the flat pull-back $\CH(Y\inv) \to \CH(Y\inv \times \Au)$ is an isomorphism, we may define a cycle class $\St{Y} \in \CH(Y\inv)$ using the formula, in $\CH(Y\inv \times \Au)$,
\begin{equation}
(b_Y\inv)_* \Std{P(Y)}= \St{Y} \times [\Au].
\end{equation}

\begin{proposition}
\label{prop:segre}
Let $Y$ be a $G$-variety. Assume that $Y\inv \to Y$ is a regular closed embedding with normal bundle $N$. Then ($c$ being the total Chern class)
\[
\St{Y} = c( -N)[Y\inv] \in \CH(Y\inv).
\]
\end{proposition}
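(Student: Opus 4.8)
The plan is to unwind the definition of $\St{Y}$ through the blow-up $b_Y \colon P(Y) \to Y \times A$ and to recognise the resulting pushforward as a Segre class. The first step is to describe the centre of this blow-up. Since $A\inv \to A$ is the diagonal $\Au \to \Au \times \Au$ (\autoref{ex:product}), whose ideal is principal, it is a regular closed embedding of codimension one with trivial normal bundle. As $Y\inv \to Y$ is regular with normal bundle $N$ by hypothesis, and $(Y \times A)\inv = Y\inv \times A\inv$ (\autoref{rem:product}), the embedding $(Y \times A)\inv \to Y \times A$ is regular, being a product of regular embeddings; its normal bundle is $M = \pi^* N \oplus \Oc$, where $\pi \colon Y\inv \times \Au \to Y\inv$ is the projection and $\Oc$ is the trivial line bundle coming from the $A$-factor.

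Next I would compute $\Std{P(Y)}$. Since $b_Y$ is FPR with regularly embedded centre equal to $(Y\times A)\inv$, \autoref{prop:blowup_is_pure}~\eqref{prop:blowup_is_pure:inv} identifies $P(Y)\inv$ with the exceptional divisor $E=\mathbb{P}(M)$ and $b_Y\inv$ with the bundle projection $E \to Y\inv \times \Au$. By \autoref{prop:def_pure}~\eqref{prop:def_pure:seq} we have $\Oc_{P(Y)}(P(Y)\inv) = \varphi_{P(Y)}^* L_{P(Y)}$, so $\Hg{P(Y)}$, being the restriction of this line bundle to $E$, is the normal bundle $\Oc_{P(Y)}(E)|_E$ of the exceptional divisor, namely the tautological bundle $\Oc_{\mathbb{P}(M)}(-1)$. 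Writing $\xi = c_1(\Oc_{\mathbb{P}(M)}(1))$, so that $c_1(\Hg{P(Y)}) = -\xi$, the definition \eqref{def:Std} then gives
\[
\Std{P(Y)} = \sum_{n\geq 0}(-1)^n c_1(\Hg{P(Y)})^n[P(Y)\inv] = \sum_{n \geq 0} \xi^n \cap [\mathbb{P}(M)].
\]

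The heart of the argument is that pushing this forward along the bundle projection $b_Y\inv$ produces a Segre class: by the definition of the Segre classes of a vector bundle and the relation $s(M)=c(M)^{-1}$ \cite[\S3.1]{Ful-In-98},
\[
(b_Y\inv)_* \Std{P(Y)} = s(M) \cap [Y\inv \times \Au] = c(-M) \cap [Y\inv \times \Au].
\]
Using the splitting $M = \pi^* N \oplus \Oc$ and $c(\Oc)=1$, one has $c(-M)=\pi^* c(-N)$, so the right-hand side equals $\pi^*\big(c(-N)[Y\inv]\big) = \big(c(-N)[Y\inv]\big) \times [\Au]$. Comparing with the relation $(b_Y\inv)_* \Std{P(Y)} = \St{Y} \times [\Au]$ defining $\St{Y}$, and using that $-\times[\Au]$ is the flat-pullback isomorphism $\CH(Y\inv) \xrightarrow{\sim} \CH(Y\inv \times \Au)$, I conclude $\St{Y} = c(-N)[Y\inv]$.

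The main obstacle is the careful bookkeeping of conventions in the middle steps. One must confirm that $\Hg{P(Y)}$ is the tautological bundle $\Oc(-1)$ (and not $\Oc(1)$), so that the alternating signs in \eqref{def:Std} cancel against $c_1(\Oc(-1))=-\xi$ and leave the positive powers $\xi^n$ that compute $s(M)$ with the correct sign; getting this sign wrong would spoil the identification with the Segre class. One must also verify that the trivial summand of $M$, coming from the diagonal of $A$, contributes nothing to $c(-M)$, which is exactly what makes the auxiliary $A$-factor harmless and reduces the whole computation to the normal bundle $N$.
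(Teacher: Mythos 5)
Your proof is correct and follows essentially the same route as the paper: identify $P(Y)\inv$ with the exceptional divisor $\mathbb{P}(q^*N\oplus 1)$, recognise $\Hg{P(Y)}$ as $\Oc(-1)$ so the signs in \eqref{def:Std} cancel, and push the powers of $\xi$ forward to obtain the Segre class $c(N)^{-1}\cap[Y\inv]$ (the paper cites \cite[Proposition~4.1~(a)]{Ful-In-98} for this last step). The sign bookkeeping and the harmlessness of the trivial summand from the $A$-factor, which you flag as the delicate points, are handled exactly as in the paper.
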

\begin{proof}
Let $q \colon Y\inv \times \Au \to Y\inv$ be the projection. Since the blow-up morphism $b_Y$ is FPR, the closed subscheme $P(Y)\inv$ of $P(Y)$ is the exceptional divisor of the blow-up, and the morphism $b_Y\inv\colon P(Y)\inv \to Y\inv \times \Au$ is the projective bundle associated to the vector bundle $q^*N \oplus 1$. The line bundle $\Hg{P(Y)}$ is the normal bundle to the closed embedding of $P(Y)\inv$ into $P(Y)$, and thus is isomorphic to the tautological line bundle $\Oc(-1)$ (see \cite[\S B.6.3]{Ful-In-98}). Therefore by \cite[Proposition~4.1~(a)]{Ful-In-98}, we have in $\CH(Y\inv \times \Au)$
\begin{align*}
\St{Y} \times [\Au]
&=(b_Y\inv)_* \circ c(-\Hg{P(Y)})[P(Y)\inv]\\
&= \sum_{n \geq 0} (-1)^n \cdot (b_Y\inv)_* \circ c_1(\Oc(-1))^n [P(Y)\inv]\\
&= \sum_{n \geq 0} (b_Y\inv)_* \circ c_1(\Oc(1))^n [P(Y)\inv]\\
&= c(-q^*N)[Y\inv \times \Au] \\ 
&= c(-N)[Y\inv] \times [\Au].\qedhere
\end{align*}
\end{proof}
\begin{corollary}
\label{cor:P_S}
Let $Y$ be a pure $G$-variety. Then $\Std{Y}=\St{Y} \in \CH(Y\inv)$.
\end{corollary}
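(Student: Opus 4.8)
The plan is to recognise that, when $Y$ is pure, the construction underlying $\St{Y}$ collapses to the very Chern-class expression that defines $\Std{Y}$, so that \autoref{prop:segre} applies directly. Purity means precisely that $Y\inv \to Y$ is an effective Cartier divisor; in particular it is a regular closed embedding (of codimension one). This is exactly the hypothesis of \autoref{prop:segre}, which will therefore compute $\St{Y}$ as $c(-N)[Y\inv]$, where $N$ is the normal bundle of $Y\inv$ in $Y$.

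The only point requiring care is the identification of $N$ with $\Hg{Y}$. First I would recall that the normal bundle of an effective Cartier divisor is the restriction to the divisor of the associated line bundle, here $\Oc_Y(Y\inv)$. By \autoref{prop:def_pure}~\eqref{prop:def_pure:seq} we have $\Oc_Y(Y\inv) = \varphi_Y^*L_Y$, while $\Hg{Y}$ was defined as the restriction of $\varphi_Y^*L_Y$ to the closed subscheme $Y\inv$; hence $N \simeq \Hg{Y}$.

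Combining these observations, \autoref{prop:segre} yields $\St{Y} = c(-N)[Y\inv] = c(-\Hg{Y})[Y\inv]$, which is precisely the defining formula \eqref{def:Std} for $\Std{Y}$. I do not expect any genuine obstacle: the statement is a formal consequence of \autoref{prop:segre}, once purity is translated into the regularity of the embedding $Y\inv \to Y$ and its normal bundle is matched with $\Hg{Y}$.
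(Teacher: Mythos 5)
Your argument is correct and is precisely the (implicit) proof the paper intends: the corollary is stated without proof immediately after \autoref{prop:segre}, and the intended deduction is exactly your chain — purity makes $Y\inv \to Y$ an effective Cartier divisor, hence a regular embedding whose normal bundle is $\Oc_Y(Y\inv)|_{Y\inv} = \varphi_Y^*L_Y|_{Y\inv} = \Hg{Y}$, so \autoref{prop:segre} gives $\St{Y} = c(-\Hg{Y})[Y\inv]$, which is the definition \eqref{def:Std} of $\Std{Y}$. No gaps.
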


\begin{proposition}
\label{prop:deg_Greg}
Let $f\colon Y \to X$ be a proper $G$-morphism between equidimensional varieties. Assume that $f$ has a degree and is FPR. Then
\[
(f\inv)_* \St{Y} = \deg f \cdot \St{X} \in \CH(X\inv).
\]
\end{proposition}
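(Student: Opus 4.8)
The plan is to reduce to the pure case treated in \autoref{lemm:deg_Greg}, by means of the blow-up construction used to define $\St{-}$. Write $A=\Au\times\Au$ with the exchange $G$-action, so that $A\inv=\Au$. The morphism $f\times\id_A\colon Y\times A\to X\times A$ is proper, being a base change of $f$, and since $f$ is FPR one computes $(f\times\id_A)^{-1}((X\times A)\inv)=f^{-1}(X\inv)\times A\inv=Y\inv\times A\inv=(Y\times A)\inv$, so $f\times\id_A$ is FPR; by \autoref{lemm:deg_prod} (with $\id_A$ of degree one) it has degree $\deg f$. First I would produce an induced morphism on blow-ups. As $(Y\times A)\inv=(f\times\id_A)^{-1}((X\times A)\inv)$ and the exceptional divisor $P(Y)\inv=b_Y^{-1}((Y\times A)\inv)$ is an effective Cartier divisor, the universal property of the blow-up $b_X$ yields a unique morphism $P(f)\colon P(Y)\to P(X)$ over $X\times A$ with $b_X\circ P(f)=(f\times\id_A)\circ b_Y$; uniqueness together with the equivariance of all the data makes $P(f)$ a $G$-morphism, and it is proper because $b_X$ is separated while $(f\times\id_A)\circ b_Y$ is proper.

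Next I would verify the hypotheses of \autoref{lemm:deg_Greg} for $P(f)$. Both $P(Y)$ and $P(X)$ are pure by construction. Taking inverse images along $b_X\circ P(f)=(f\times\id_A)\circ b_Y$ gives $P(f)^{-1}(P(X)\inv)=b_Y^{-1}((Y\times A)\inv)=P(Y)\inv$, so $P(f)$ is FPR. For the degree I would apply \autoref{lemm:flat_pullback}~\eqref{it:pullback_ii} to the square with vertical maps $b_Y,b_X$ and horizontal maps $P(f),f\times\id_A$, taking $Z=(X\times A)\inv$: the blow-up $b_X$ is an isomorphism off $Z$, and both $b_X^{-1}Z=P(X)\inv$ and $((f\times\id_A)\circ b_Y)^{-1}Z=P(Y)\inv$ are exceptional divisors, hence nowhere dense; this gives $\deg P(f)=\deg(f\times\id_A)=\deg f$. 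The one hypothesis requiring the auxiliary factor $A$ is equidimensionality: because $A\inv=\Au$ is nowhere dense in $A$, the centre $(Y\times A)\inv=Y\inv\times\Au$ is nowhere dense in $Y\times A$ even when $Y\inv$ is dense in $Y$ (for instance when the action is trivial), so $P(Y)$ is the blow-up of an equidimensional variety along a nowhere dense centre, whence its irreducible components are the strict transforms of those of $Y\times A$ and it is equidimensional; likewise for $P(X)$. This is the point where I expect the main subtlety to lie, and where the role of $A$ becomes essential.

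With the hypotheses in place, \autoref{lemm:deg_Greg} gives $(P(f)\inv)_*\Std{P(Y)}=\deg f\cdot\Std{P(X)}$ in $\CH(P(X)\inv)$. Finally I would transfer this to $\St{-}$. Applying the fixed-locus functor to $b_X\circ P(f)=(f\times\id_A)\circ b_Y$ produces the commutative square $b_X\inv\circ P(f)\inv=(f\inv\times\id_{\Au})\circ b_Y\inv$, using $(f\times\id_A)\inv=f\inv\times\id_{\Au}$. Pushing $\Std{P(Y)}$ forward along this square, and invoking the defining relation $(b_Y\inv)_*\Std{P(Y)}=\St{Y}\times[\Au]$ together with the projection formula $(f\inv\times\id_{\Au})_*(\St{Y}\times[\Au])=((f\inv)_*\St{Y})\times[\Au]$ (valid since $f\inv$ is proper), I obtain
\[
((f\inv)_*\St{Y})\times[\Au]=\deg f\cdot(\St{X}\times[\Au])\in\CH(X\inv\times\Au).
\]
Since external product with $[\Au]$ is the flat pull-back isomorphism $\CH(X\inv)\xrightarrow{\sim}\CH(X\inv\times\Au)$, it is injective, and cancelling it yields $(f\inv)_*\St{Y}=\deg f\cdot\St{X}$ in $\CH(X\inv)$, as required.
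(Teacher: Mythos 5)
Your proof is correct and follows essentially the same route as the paper: the paper proves \autoref{prop:deg_Greg} jointly with \autoref{th:main_G} by inserting an extra blow-up $h\colon Y'\to P(Y)$ of $W=((f\times\id_A)\circ b_Y)^{-1}((X\times A)\inv)$, and observes that in the FPR case $h$ is an isomorphism and $g$ is FPR, so its $g$ is exactly your $P(f)$ and the conclusion likewise comes from \autoref{lemm:deg_Greg} applied to the induced map of blow-ups. Your explicit check of equidimensionality of $P(Y)$ and $P(X)$ (left implicit in the paper) is a welcome addition.
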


\begin{theorem}
\label{th:main_G}
Let $f\colon Y \to X$ be a proper $G$-morphism between equidimensional varieties. Assume that $f$ has a degree. Then
\[
(f\inv)_* \St{Y} = \deg f\cdot \St{X} \in  \CH(X\inv)/2.
\]
\end{theorem}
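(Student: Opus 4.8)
The plan is to deduce the statement from \autoref{lemm:main} by running the blow-up construction that defines $\St{-}$, corrected by one auxiliary blow-up along a residual scheme. Write $A=\Au\times\Au$ with its exchange $G$-action, and consider the proper $G$-morphism $\bar f=f\times\id_A\colon Y\times A\to X\times A$, which has degree $\deg f$ by \autoref{lemm:deg_prod}. Recall from \S\ref{sect:S} the blow-ups $b_Y\colon P(Y)\to Y\times A$ and $b_X\colon P(X)\to X\times A$ of the respective fixed loci; both $P(Y),P(X)$ are pure and $b_Y,b_X$ are FPR. Since the flat pull-back $\CH(X\inv)\to\CH(X\inv\times\Au)$ is an isomorphism (as recalled in \S\ref{sect:S}) and commutes with the proper push-forward $(f\inv\times\id_{\Au})_*$, and since $\bar f\inv=f\inv\times\id_{\Au}$ (by \autoref{rem:product} and \autoref{ex:product}), it suffices to prove the identity after applying $-\times[\Au]$, that is, to establish in $\CH(X\inv\times\Au)/2$ the equality
\[
(\bar f\inv)_*(b_Y\inv)_*\,\Std{P(Y)}=\deg f\cdot (b_X\inv)_*\,\Std{P(X)}.
\]

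The obstacle is that there is in general no morphism $P(Y)\to P(X)$ over $\bar f$: one has $\bar f^{-1}((X\times A)\inv)=f^{-1}(X\inv)\times\Au$, which strictly contains $(Y\times A)\inv$ unless $f$ is FPR (the FPR case being precisely how one obtains \autoref{prop:deg_Greg}). I would circumvent this by blowing up the residual scheme. Set $g=\bar f\circ b_Y\colon P(Y)\to X\times A$, and let $R_g$ be the residual scheme to $P(Y)\inv$ in $g^{-1}((X\times A)\inv)$ on $P(Y)$ (\autoref{not:Rf}). By \autoref{lemm:factors}, $R_g=\Rg{g}\times_{P(Y)/G}P(Y)$, so $\Ic_{R_g}=\Ic_{\Rg{g}}\Oc_{P(Y)}$ and $R_g$ is admissible (\autoref{def:antsym}); since $P(Y)$ is pure, \autoref{prop:blowup_is_pure}~\eqref{prop:blowup_is_pure:pure} shows that the blow-up $\pi\colon P'\to P(Y)$ of $R_g$ is an FPR morphism of pure $G$-varieties, of degree one (\autoref{ex:degree}~\eqref{ex:deg:0}).

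The purpose of this blow-up is to render the relevant ideal invertible. By definition of the residual scheme, $\Ic_{g^{-1}((X\times A)\inv)}=\Ic_{R_g}\cdot\Ic_{P(Y)\inv}$ on $P(Y)$, where $\Ic_{P(Y)\inv}$ is already invertible; pulling back to $P'$, where $\Ic_{R_g}$ becomes invertible, the ideal $\Ic_{(X\times A)\inv}\Oc_{P'}$ is a product of invertible ideals, hence invertible. By the universal property of the blow-up $b_X$, the composite $\bar f\circ b_Y\circ\pi$ therefore lifts uniquely to a $G$-morphism $\psi\colon P'\to P(X)$ with $b_X\circ\psi=\bar f\circ b_Y\circ\pi$ ($G$-equivariance of $\psi$ follows from the uniqueness in this universal property). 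Then $\psi$ is a proper morphism of equidimensional pure $G$-varieties, has degree $\deg f$ (\autoref{ex:degree}~\eqref{ex:deg:4}), and $\psi^{-1}(P(X)\inv)$ is the effective Cartier divisor on $P'$ cut out by the invertible ideal $\Ic_{(X\times A)\inv}\Oc_{P'}$. Thus $\psi$ meets exactly the hypotheses of \autoref{lemm:main}, which yields $(\psi\inv)_*\Std{P'}=\deg f\cdot\Std{P(X)}$ in $\CH(P(X)\inv)/2$.

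It remains to assemble the pieces. Applying \autoref{lemm:deg_Greg} to the FPR morphism $\pi$ of degree one gives $(\pi\inv)_*\Std{P'}=\Std{P(Y)}$ exactly. Using $b_X\circ\psi=\bar f\circ b_Y\circ\pi$ and functoriality of push-forward on fixed loci, I then compute in $\CH(X\inv\times\Au)/2$
\[
(\bar f\inv)_*(b_Y\inv)_*\Std{P(Y)}=(\bar f\inv)_*(b_Y\inv)_*(\pi\inv)_*\Std{P'}=(b_X\inv)_*(\psi\inv)_*\Std{P'}=\deg f\cdot (b_X\inv)_*\Std{P(X)},
\]
which is the required equality; stripping off $-\times[\Au]$ finishes the proof. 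The main obstacle is the non-existence of a direct map $P(Y)\to P(X)$, and the crux of the argument is recognising via \autoref{lemm:factors} that $R_g$ is admissible, so that blowing it up keeps us among pure $G$-varieties while simultaneously making $\Ic_{(X\times A)\inv}$ invertible, thereby reducing to \autoref{lemm:main}. I note finally that the centres blown up (namely $(X\times A)\inv$ and $R_g\subseteq g^{-1}((X\times A)\inv)$) are automatically nowhere dense: every component of $X\times A$, and likewise of $P(Y)$, dominates the $A$-factor, whereas $(X\times A)\inv$ lies over the diagonal $\Au\subsetneq A$; this is what guarantees that all the blow-ups above are birational and preserve equidimensionality.
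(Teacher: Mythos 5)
Your argument is essentially the paper's proof: the paper likewise reduces to \autoref{lemm:main} via one further blow-up of $P(Y)$, the only difference being the centre --- the paper blows up $W=(\bar f\circ b_Y)^{-1}\bigl((X\times A)\inv\bigr)$ itself, which is admissible simply by \autoref{rem:anti} and \autoref{lemm:anti:fix}, whereas you blow up the residual scheme $R_g$. Since $\Ic_W=\Ic_{R_g}\cdot\Ic_{P(Y)\inv}$ with the second factor already invertible, the two blow-ups are canonically isomorphic, so this is a cosmetic variation; yours costs the extra input of \autoref{lemm:factors}, while the paper's choice makes $\psi^{-1}(P(X)\inv)$ literally the exceptional divisor. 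One step is misjustified: \autoref{ex:degree}~\eqref{ex:deg:4} determines the degree of $g$ from those of $h$ and $g\circ h$, not the degree of $h$ from those of $g$ and $g\circ h$, so it cannot yield the degree of $\psi$ from $b_X\circ\psi=\bar f\circ b_Y\circ\pi$ together with $\deg b_X=1$ (a priori $\psi_*[P']$ need not even be a multiple of $[P(X)]$). Instead apply \autoref{lemm:flat_pullback}~\eqref{it:pullback_ii} to the square formed by $\psi$, $b_X$, $b_Y\circ\pi$ and $f\times\id_A$, with $Z=(X\times A)\inv$; the nowhere-density of $b_X^{-1}Z$ and of $(\bar f\circ b_Y\circ\pi)^{-1}Z$ required there is exactly what you verify in your closing remark. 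With that repair the proof is complete.
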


\begin{proof}[Proof of \autoref{th:main_G} (resp.\ \autoref{prop:deg_Greg})]
We construct a commutative diagram of $G$-morphisms
\[ \xymatrix{
Y' \ar[r]^h \ar[rd]_g &P(Y) \ar[r]^{b_Y} & Y \times A \ar[d]^{f \times \id_A} \\ 
&P(X) \ar[r]^{b_X} & X \times A
}\]
as follows. Let $Z=(X \times A)\inv=X\inv \times \Au$. Its inverse image $W=(f\times \id_A\circ b_Y)^{-1}Z$ is an admissible closed subscheme of $P(Y)$ by  \autoref{rem:anti} and \autoref{lemm:anti:fix}. We let $h\colon Y' \to P(Y)$ be the blow-up of $W$ in $P(Y)$. Since the $G$-variety $P(Y)$ is pure, so is $Y'$ by \autoref{prop:blowup_is_pure}~\eqref{prop:blowup_is_pure:pure}. We have $b_X^{-1}Z=P(X)\inv$ since $b_X$ is FPR. The existence of the morphism $g$ such that $g^{-1}(P(X)\inv)=h^{-1}W$, and the fact that $g$ is a $G$-morphism, follow from the universal property of the blow-up.

Since $f$ has a degree, the morphism $f \times \id_A$ has degree $\deg f$ by \autoref{lemm:flat_pullback}~\eqref{it:pullback_i}. The closed subschemes $Z$ of $X \times A$ and $(f\times \id_A)^{-1}Z$ of $Y \times A$ are nowhere dense, being contained in the effective Cartier divisors $X \times \Au$ and $Y \times \Au$. By \autoref{lemm:blow_cartier}, the closed subschemes $b_X^{-1}Z$ of $P(X)$ and $h^{-1}W$ of $Y'$ are nowhere dense. It follows from \autoref{lemm:flat_pullback}~\eqref{it:pullback_ii} that $g$ has degree $\deg f$. By \autoref{prop:blowup_is_pure}, the $G$-morphisms $b_Y$ and $h$, and therefore also $b_Y \circ h$, are FPR. Note that in the situation of \autoref{prop:deg_Greg}, the $G$-morphism $g$ is FPR, and $h$ is an isomorphism. We have in $\CH(X\inv \times \Au)/2$ (resp.\ $\CH(X\inv \times \Au)$),
\begin{align*}
(f\inv)_*\St{Y} \times [\Au]
&= (f\inv \times \id_{\Au})_* \circ (b_Y\inv)_* \Std{P(Y)} && \text{by definition of $\St{Y}$}\\
&=(f\inv \times \id_{\Au})_*\circ (b_Y\inv)_* \circ (h\inv)_*\Std{Y'} && \text{by \autoref{lemm:deg_Greg} } \\
 &= (b_X\inv)_* \circ (g\inv)_*\Std{Y'} \\
 &=\deg f \cdot (b_X\inv)_*\circ \Std{P(X)}&& \text{by \autoref{lemm:main} (resp.\ \ref{lemm:deg_Greg})}\\
 &=\deg f \cdot \St{X} \times [\Au] &&\text{by definition of $\St{X}$.}\qedhere
\end{align*} 
\end{proof}

\section{The degree formula}
\numberwithin{theorem}{section}
\numberwithin{lemma}{section}
\numberwithin{proposition}{section}
\numberwithin{corollary}{section}
\numberwithin{example}{section}
\numberwithin{notation}{section}
\numberwithin{definition}{section}
\numberwithin{remark}{section}

\label{sect:exch}
Let $X$ be a variety. We consider $X \times X$ as a $G$-variety via the exchange of factors. The fixed locus is the diagonal $X$ (see \autoref{ex:product}). We define
\[
\Sq(X) = \St{X \times X}\in \CH(X).
\]

\begin{proposition}
\label{prop:sq_c}
Let $X$ be a smooth variety, with tangent bundle $\Tan_X$. Then
\[
\Sq(X) = c(-\Tan_X)[X] \in \CH(X).
\]
\end{proposition}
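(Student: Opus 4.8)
The goal is to compute $\Sq(X) = \St{X \times X}$ for $X$ smooth, and the natural strategy is to reduce to \autoref{prop:segre}, which computes $\St{Y}$ for any $G$-variety $Y$ whose fixed locus $Y\inv \to Y$ is a regular closed embedding, in terms of the total Chern class of the normal bundle $N$ of that embedding. So the plan is: take $Y = X \times X$ with the exchange $G$-action, verify that the hypothesis of \autoref{prop:segre} is satisfied, identify the normal bundle $N$, and read off the formula.

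First I would recall from \autoref{ex:product} that $(X \times X)\inv$ is the diagonal $X$, embedded via $x \mapsto (x,x)$. When $X$ is smooth, the diagonal embedding $\Delta \colon X \to X \times X$ is a regular closed embedding, and this is the crucial input: it places us squarely in the situation of \autoref{prop:segre}. The next step is to identify the normal bundle of the diagonal. This is a standard fact: the normal bundle of the diagonal embedding of a smooth variety is canonically isomorphic to the tangent bundle $\Tan_X$. Concretely, the conormal sheaf $\Ic_\Delta/\Ic_\Delta^2$ is the sheaf of differentials $\Omega_X$, so its dual, the normal bundle $N$, is $\Tan_X$.

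With these two identifications in hand, \autoref{prop:segre} gives directly
\[
\Sq(X) = \St{X \times X} = c(-N)[(X\times X)\inv] = c(-\Tan_X)[X] \in \CH(X),
\]
which is exactly the claimed formula. The argument is therefore short: it is essentially an invocation of \autoref{prop:segre} together with the two classical geometric facts about the diagonal of a smooth variety.

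I do not anticipate a genuine obstacle here; the content of the proposition is really that \autoref{prop:segre} specialises nicely. The only points requiring a word of justification are (i) that the diagonal of a smooth variety is a regular embedding and (ii) that its normal bundle is $\Tan_X$; both are standard and can be cited. The one mild subtlety worth noting is that $X$ need not be equidimensional or connected for the statement to make sense, but since smoothness gives the regular embedding fibrewise over each component, \autoref{prop:segre} applies without any equidimensionality assumption (indeed \autoref{prop:segre} itself is stated for an arbitrary $G$-variety with regular fixed-point embedding), so no reduction to the connected case is needed.
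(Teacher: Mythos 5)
Your proof is correct and is exactly the paper's argument: the paper's proof of \autoref{prop:sq_c} is the one-line "This follows from \autoref{prop:segre}," and your write-up simply supplies the (standard) facts that the diagonal of a smooth variety is a regular closed embedding with normal bundle $\Tan_X$. No issues.
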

\begin{proof}
This follows from \autoref{prop:segre}.
\end{proof}

\begin{theorem}
\label{th:df}
Let $f\colon Y \to X$ be a proper morphism between equidimensional varieties. Assume that $f$ has a degree. Then
\[
f_*\Sq(Y) = \deg f \cdot\Sq(X) \in \CH(X) /2.
\]
\end{theorem}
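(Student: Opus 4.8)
The plan is to deduce Theorem~\ref{th:df} directly from the general functoriality result \autoref{th:main_G}, applied to a carefully chosen $G$-morphism. The key observation is the one highlighted in \autoref{ex:product}: if $f\colon Y \to X$ is a morphism of varieties, then $f \times f \colon Y \times Y \to X \times X$ is a $G$-morphism for the exchange involutions, and its restriction to fixed loci is $(f \times f)\inv = f$ itself, under the identification of each fixed locus with the corresponding diagonal.

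First I would verify that $f \times f$ satisfies the hypotheses of \autoref{th:main_G}. Properness of $f \times f$ follows from properness of $f$. Equidimensionality of $Y \times Y$ and $X \times X$ follows from that of $Y$ and $X$ (the dimension of a product being the sum of the dimensions). The one genuinely substantive point is the degree: I must check that $f \times f$ has a degree, and that $\deg(f \times f) = (\deg f)^2$. This is exactly the content of \autoref{lemm:deg_prod}, applied with $f_1 = f_2 = f$ between the equidimensional schemes $Y$ and $X$; it gives $\deg(f \times f) = (\deg f)^2$.

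With the hypotheses in place, \autoref{th:main_G} yields, in $\CH((X \times X)\inv)/2$,
\[
((f \times f)\inv)_* \St{Y \times Y} = \deg(f \times f) \cdot \St{X \times X}.
\]
Now I translate every term through the identifications already established. By definition $\St{Y \times Y} = \Sq(Y)$ and $\St{X \times X} = \Sq(X)$, and by \autoref{ex:product} the fixed loci $(Y \times Y)\inv$ and $(X \times X)\inv$ are the diagonals $Y$ and $X$, with $(f \times f)\inv$ identified with $f$. Hence the displayed equation becomes
\[
f_* \Sq(Y) = (\deg f)^2 \cdot \Sq(X) \in \CH(X)/2.
\]

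The final, and really only delicate, step is to replace $(\deg f)^2$ by $\deg f$ on the right-hand side. This is legitimate precisely because we are working modulo two: for any integer $d$, one has $d^2 \equiv d \pmod 2$, since $d^2 - d = d(d-1)$ is a product of consecutive integers and therefore even. Applying this with $d = \deg f$ gives $(\deg f)^2 \cdot \Sq(X) = \deg f \cdot \Sq(X)$ in $\CH(X)/2$, which completes the proof. I expect this congruence $d^2 \equiv d$ to be the conceptual crux of why the degree formula is naturally a mod-two statement, even though it is computationally trivial; everything else is a direct specialisation of the machinery built in the previous sections.
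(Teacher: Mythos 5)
Your proposal is correct and follows essentially the same route as the paper: apply \autoref{lemm:deg_prod} to get $\deg(f\times f)=(\deg f)^2\equiv \deg f \pmod 2$, check equidimensionality of the squares (the paper cites EGA~IV for this), and conclude by \autoref{th:main_G} applied to $f\times f$ using the identification $(f\times f)\inv=f$ from \autoref{ex:product}.
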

\begin{proof}
By \autoref{lemm:deg_prod}, the morphism $f \times f$ has degree $(\deg f)^2$, which coincides modulo two with $\deg f$. The varieties $Y\times Y$ and $X \times X$ are equidimensional by \cite[(4.2.4, (iii)),(4.2.6)]{ega-4-2}. We conclude with \autoref{th:main_G} applied to the $G$-morphism $f \times f$.
\end{proof}

\begin{corollary}
\label{cor:sq}
The map $[V] \mapsto \et{V}$, where $V$ is an integral scheme, induces a morphism $\Zo(-) \to \CH(-)/2$ of functors (see \S\ref{sect:Zo}) from the category of varieties and proper morphisms to abelian groups.
\end{corollary}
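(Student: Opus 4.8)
The plan is to write down the natural transformation explicitly and then reduce its naturality to \autoref{th:df}. For a variety $X$, I would define $\eta_X\colon \Zo(X) \to \CH(X)/2$ on the free generators by sending the class $[V]$ of an integral closed subscheme $i_V\colon V \hookrightarrow X$ to $(i_V)_*\Sq(V)$, that is, the pushforward along the (proper) closed embedding $i_V$ of the reduction modulo two of $\Sq(V) = \St{V\times V} \in \CH(V)$. Since $\Zo(X)$ is the free abelian group on these generators (see \S\ref{sect:Zo}), this prescription determines a unique group homomorphism $\eta_X$, and there is nothing to verify for well-definedness.

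It then remains to check naturality: for a proper morphism $f\colon X \to X'$, the homomorphisms $f_* \circ \eta_X$ and $\eta_{X'} \circ f_*$ from $\Zo(X)$ to $\CH(X')/2$ must agree. As both are additive, it suffices to test them on a generator $[V]$. Let $V'$ be the scheme-theoretic closure of $f(V)$ in $X'$; it is an integral closed subscheme $i_{V'}\colon V' \hookrightarrow X'$, and $f$ restricts to a proper dominant morphism $\bar f\colon V \to V'$ with $f\circ i_V = i_{V'}\circ \bar f$. On one side, functoriality of proper pushforward on Chow groups gives $f_*\eta_X([V]) = (i_{V'})_* (\bar f)_* \Sq(V)$. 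On the other side, by the definition of $f_*$ on cycles in \S\ref{sect:Zo} we have $f_*[V] = \deg(\bar f)\cdot[V']$, where the integer $\deg(\bar f)$ equals $[k(V):k(V')]$ when $\dim V = \dim V'$ and is $0$ otherwise; in either case this integer is precisely the degree of $\bar f$ in the sense of \autoref{def:degree}, using \autoref{ex:degree}~\eqref{ex:deg:3} and \eqref{ex:deg:2} together with the fact that $V$ is integral of pure dimension $\dim V \geq \dim V'$. Hence $\eta_{X'}(f_*[V]) = \deg(\bar f)\cdot (i_{V'})_*\Sq(V')$.

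Comparing the two expressions, naturality on $[V]$ reduces to the identity $(\bar f)_*\Sq(V) = \deg(\bar f)\cdot \Sq(V') \in \CH(V')/2$, which is exactly the conclusion of \autoref{th:df} applied to the proper morphism $\bar f\colon V \to V'$ between the equidimensional (indeed integral) varieties $V$ and $V'$, a morphism which has a degree by the discussion above. This closes the argument. I do not expect a serious obstacle here: the substance of the corollary is carried entirely by \autoref{th:df}, and what remains is the packaging of the degree formula as a natural transformation. The only points genuinely requiring care are the verification that $\bar f$ has a degree in the sense of \autoref{def:degree} and that this degree coincides with the multiplicity appearing in the cycle-theoretic pushforward $f_*[V]$, so that the two sides match on the nose — in particular in the dimension-dropping case $\dim V > \dim V'$, where both sides vanish modulo two.
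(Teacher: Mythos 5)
Your proof is correct and follows the route the paper intends: the corollary is stated without proof as an immediate consequence of \autoref{th:df}, and your write-up supplies exactly the missing bookkeeping (defining $\eta_X$ on the free generators, factoring $f\circ i_V = i_{V'}\circ \bar f$ through the closure of the image, and matching the multiplicity in the cycle-theoretic pushforward of \S\ref{sect:Zo} with the degree of $\bar f$ in the sense of \autoref{def:degree} via \autoref{ex:degree}~\eqref{ex:deg:2} and \eqref{ex:deg:3}). Nothing further is needed.
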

\begin{remark}
\begin{enumerate}[(i)]
\item To prove that this passes to rational equivalence, i.e.\ that the morphism of \autoref{cor:sq} descends to $\CH(-) \to \CH(-)/2$, would give a construction of Steenrod squares. This has been done in \cite[\S59]{EKM} when the characteristic of $k$ differs from two.

\item One may replace the word ``varieties'' by ``schemes'' in \autoref{cor:sq}, using \cite[Propositions 5.2 and 5.3]{2nd} and \cite[Lemma 1.6]{Kimura-Fractional}.
\end{enumerate}
\end{remark}

The index $n_X$ of a scheme $X$ is the positive generator of the image of $(p_X)_* \colon \Zo(X) \to \Zo(\Spec k)=\Zz$ induced by the morphism $p_X\colon X \to \Spec k$. In other words $n_X$ is the g.c.d.\ of the degrees of closed points on $X$. When $X$ is a complete variety, the morphism $(p_X)_*$ above descends to a morphism $\deg \colon \CH(X) \to \Zz$, and we consider the integer
\[
\etd{X} = \deg \Sq(X) \in \Zz.
\]
When $X$ is a smooth, connected, complete variety of dimension $d$, we have by \autoref{prop:sq_c} ($c_d$ denotes the $d$-th Chern class, with values in $\CH_0(X)$)
\begin{equation}
\label{eq:etd}
\etd{X} = \deg c_d(-\Tan_X).
\end{equation}

Taking the degree in \autoref{th:df}, we obtain:
\begin{corollary}
\label{cor:df}
Let $f \colon Y \to X$ be a morphism between complete equidimensional varieties. Assume that $f$ has a degree. Then $n_X \mid n_Y$ and we have in $\Zz/2$
\[
\frac{n_Y}{n_X} \cdot \frac{\etd{Y}}{n_Y} =  \deg f \cdot \frac{\etd{X}}{n_X} \mod 2.
\]
\end{corollary}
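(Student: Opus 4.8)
The plan is to deduce the corollary from \autoref{th:df} by applying the degree map, keeping careful track of divisibilities. First I would record the two integrality facts that give the statement meaning. Since $p_X \circ f = p_Y$ as morphisms to $\Spec k$, functoriality of proper pushforward on cycles yields $(p_Y)_* = (p_X)_* \circ f_*$ on $\Zo$, whence $\im (p_Y)_* \subseteq \im (p_X)_*$, that is $n_X \mid n_Y$. Moreover, for a complete variety $V$ the degree map $\deg \colon \CH(V) \to \Zz$ is obtained by descending $(p_V)_*$ along the surjection $\Zo(V) \twoheadrightarrow \CH(V)$, so its image is exactly $n_V \Zz$; applying this to the class $\Sq(V)$ gives $n_V \mid \etd{V}$. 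In particular each of $n_Y/n_X$, $\etd{Y}/n_Y$ and $\etd{X}/n_X$ is an integer.

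Next I would apply $\deg \colon \CH(X) \to \Zz$ to the relation of \autoref{th:df}, which literally asserts $f_*\Sq(Y) - \deg f \cdot \Sq(X) \in 2\,\CH(X)$. Because $p_X \circ f = p_Y$, functoriality of pushforward on Chow groups gives $\deg \circ f_* = \deg$ (the degree on $\CH(Y)$), so the two terms become $\etd{Y}$ and $\deg f \cdot \etd{X}$. The essential bookkeeping point is that, since $\deg \CH(X) = n_X \Zz$, the subgroup $2\,\CH(X)$ is carried into $2 n_X \Zz$; hence the mod-two relation in the Chow group upgrades to the integral congruence
\[
\etd{Y} \equiv \deg f \cdot \etd{X} \pmod{2 n_X}.
\]

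Finally I would divide this congruence by $n_X$. As $n_X$ divides $\etd{X}$, divides $\etd{Y}$ (because $n_X \mid n_Y \mid \etd{Y}$), and divides $2n_X$, the congruence descends to $\etd{Y}/n_X \equiv \deg f \cdot (\etd{X}/n_X) \bmod 2$; rewriting $\etd{Y}/n_X = (n_Y/n_X)\cdot(\etd{Y}/n_Y)$ gives exactly the asserted formula. I expect no serious obstacle here: all the geometric difficulty is already absorbed into \autoref{th:df}, and the only subtle step is the promotion of a modulo-two relation in $\CH(X)$ to a modulo-$2n_X$ relation among integers, which rests entirely on the identification $\deg \CH(X) = n_X \Zz$.
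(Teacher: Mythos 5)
Your proposal is correct and takes essentially the same approach as the paper, whose entire proof is the phrase ``Taking the degree in \autoref{th:df}, we obtain''; you have simply made explicit the bookkeeping this requires, namely that $\deg\CH(X)=n_X\Zz$, so $\deg(2\CH(X))\subseteq 2n_X\Zz$ and the mod-$2$ relation in $\CH(X)$ yields the integral congruence $\etd{Y}\equiv \deg f\cdot \etd{X} \pmod{2n_X}$, which one then divides by $n_X$. The divisibility checks ($n_X\mid n_Y$ via $(p_Y)_*=(p_X)_*\circ f_*$, and $n_V\mid \etd{V}$ via the image of the degree map) are exactly what is needed and are carried out correctly.
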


The next corollary was proved (for $X$ is smooth) in arbitrary characteristic in \cite{Mer-Ori,reduced}, and in characteristic two in \cite{Ros-On-08}.
\begin{corollary}
\label{cor:even}
Let $X$ be a complete integral variety of positive dimension. Then the integer $s_X$ is even.
\end{corollary}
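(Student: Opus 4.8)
The plan is to apply \autoref{th:df} to the structure morphism $p_X \colon X \to \Spec k$, exploiting that this morphism has degree zero precisely because $\dim X > 0$. First I would check the hypotheses of \autoref{th:df} for $f = p_X$: the morphism $p_X$ is proper since $X$ is complete, the variety $X$ is integral (hence irreducible and in particular equidimensional), and $\Spec k$ is a point (hence trivially equidimensional). It remains to see that $p_X$ has a degree; but $X$ has pure dimension $\dim X > 0 = \dim \Spec k$, so \autoref{ex:degree}~\eqref{ex:deg:2} shows that $p_X$ has degree zero.

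Applying \autoref{th:df} to $p_X$ then yields
\[
(p_X)_* \Sq(X) = \deg(p_X) \cdot \Sq(\Spec k) = 0 \in \CH(\Spec k)/2 = \Zz/2.
\]
On the other hand, because $X$ is complete, the map $(p_X)_* \colon \CH(X) \to \CH(\Spec k) = \Zz$ is by definition the degree map, so that $\etd{X} = \deg \Sq(X) = (p_X)_* \Sq(X)$. The displayed equation therefore asserts exactly that $\etd{X}$ reduces to zero modulo two, that is, that $\etd{X}$ is even.

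I do not expect any genuine obstacle: all the substantive work is already carried out in \autoref{th:df}, and the present statement is simply its specialisation to the structure morphism of a positive-dimensional variety, whose degree vanishes for dimension reasons. The only point requiring a moment's care is the bookkeeping identifying $\deg$ on $\CH(X)$ with $(p_X)_*$ together with the dimension count giving $\deg p_X = 0$, both of which are immediate.
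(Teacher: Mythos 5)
Your proof is correct and follows essentially the same route as the paper, which applies \autoref{cor:df} (itself just \autoref{th:df} after taking degrees) to the degree-zero structure morphism $X \to \Spec k$. The dimension count giving $\deg p_X = 0$ and the identification of $(p_X)_*$ with the degree map are exactly the points the paper relies on implicitly.
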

\begin{proof}
We apply \autoref{cor:df} to the morphism $X \to \Spec k$ of degree zero.
\end{proof}

\begin{corollary}[Rost's degree formula]
\label{cor:rost_df}
Let $f \colon Y \dasharrow X$ be a rational map between smooth, connected, complete varieties of dimension $d$. Then $n_X \mid n_Y$, and we have in $\Zz/2$
\[
\frac{n_Y}{n_X} \cdot \frac{\deg c_d(-\Tan_Y)}{n_Y} = \deg f \cdot \frac{\deg c_d(-\Tan_X)}{n_X} \mod 2.
\]
\end{corollary}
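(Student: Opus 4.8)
The plan is to reduce the rational map to honest morphisms by passing to the closure of its graph, and then to apply the degree formula for morphisms (\autoref{cor:df}) twice. Realise $f$ as a morphism on an open dense $U\subseteq Y$, let $\Gamma\subseteq Y\times X$ be the closure of its graph, and let $p\colon\Gamma\to Y$ and $q\colon\Gamma\to X$ be the two projections. Since $X$ is complete, $\Gamma$ is a complete integral variety of dimension $d$, hence equidimensional; $p$ is proper and an isomorphism over $U$, so birational of degree one (\autoref{ex:degree}); and $q$ is proper and realises $f$ on the dense open $p^{-1}U\cong U$, hence has degree $\deg f$ by \autoref{def:degree}. The crucial point is that, because resolution of singularities is unavailable in arbitrary characteristic, $\Gamma$ need not be smooth: the whole strategy is that \autoref{cor:df} nonetheless applies to the possibly singular $\Gamma$, and the uncomputable quantity $\etd{\Gamma}$ gets eliminated between the two applications.

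Applying \autoref{cor:df} to $q$ gives $n_X\mid n_\Gamma$ and $\etd{\Gamma}/n_X\equiv \deg f\cdot(\etd{X}/n_X)\pmod 2$, while applying it to $p$ (of degree one) gives $n_Y\mid n_\Gamma$ and $\etd{\Gamma}/n_\Gamma\equiv \etd{Y}/n_Y\pmod 2$. Granting the key equality $n_\Gamma=n_Y$ discussed below, the second congruence reads $\etd{\Gamma}\equiv \etd{Y}\pmod{2n_Y}$; combined with $n_X\mid n_\Gamma=n_Y$ this yields $n_X\mid n_Y$ and, reducing modulo $2n_X$, the relation $\etd{Y}/n_X\equiv\etd{\Gamma}/n_X\pmod 2$. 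Feeding this into the first congruence gives $\etd{Y}/n_X\equiv\deg f\cdot(\etd{X}/n_X)$, that is $(n_Y/n_X)(\etd{Y}/n_Y)\equiv\deg f\cdot(\etd{X}/n_X)\pmod 2$. Since $Y$ and $X$ are smooth and connected, \eqref{eq:etd} rewrites $\etd{Y}$ and $\etd{X}$ as $\deg c_d(-\Tan_Y)$ and $\deg c_d(-\Tan_X)$, which is the asserted formula.

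The main obstacle, and the only genuine input beyond \autoref{cor:df}, is the birational invariance of the index $n_\Gamma=n_Y$. One inclusion, $n_Y\mid n_\Gamma$, is automatic, since every closed point of $\Gamma$ maps to a closed point of $Y$ of dividing degree. For the reverse I would isolate the lemma that for a smooth complete (hence projective) integral variety $Y$ and any open dense $U\subseteq Y$ one has $n_U=n_Y$, where $n_U$ is the g.c.d.\ of the degrees of the closed points of $U$. Indeed $p$ is an isomorphism over $U$, so $n_\Gamma\mid n_U$, and the lemma then gives $n_\Gamma\mid n_U=n_Y$.

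To prove the lemma I would invoke the moving lemma for zero-cycles on the smooth projective variety $Y$: every zero-cycle is rationally equivalent to one supported on $U$, since a zero-cycle can be moved to meet the positive-codimension closed complement $Y-U$ properly, i.e.\ to avoid it. Because $Y$ is complete the degree is invariant under rational equivalence, so the image $n_Y\Zz$ of $\deg\colon\CH(Y)\to\Zz$ on zero-cycles is already generated by the degrees of $U$-supported cycles, namely $n_U\Zz$; hence $n_U\mid n_Y$, and the trivial inclusion $n_Y\mid n_U$ gives equality. (It is essential to move the whole cycle, not a single point: on an elliptic curve a degree-one point is rationally equivalent to no other single point, yet its class is still representable off any prescribed finite set.) The moving lemma is characteristic-free for smooth quasi-projective varieties, so this step, and therefore the entire argument, is valid in arbitrary characteristic, as required.
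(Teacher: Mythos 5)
Your proof is correct and follows the same route as the paper: close up the graph to get $\Gamma$ with its two proper projections, apply \autoref{cor:df} to each (degrees $1$ and $\deg f$), and eliminate $\etd{\Gamma}$. The one point where you diverge is the divisibility bookkeeping, and there you have made the argument harder than it needs to be. Applied to the degree-one projection $p\colon\Gamma\to Y$, \autoref{cor:df} literally gives $(n_\Gamma/n_Y)\cdot(\etd{\Gamma}/n_\Gamma)=\etd{\Gamma}/n_Y\equiv \etd{Y}/n_Y \pmod 2$ --- not $\etd{\Gamma}/n_\Gamma\equiv\etd{Y}/n_Y$ as you wrote, and it is this misreading that makes the equality $n_\Gamma=n_Y$ look necessary. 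With the congruence read correctly, one only needs $n_X\mid n_Y$ (which the paper quotes from earlier work): multiplying $\etd{\Gamma}/n_Y\equiv\etd{Y}/n_Y$ by the integer $n_Y/n_X$ yields $\etd{\Gamma}/n_X\equiv(n_Y/n_X)(\etd{Y}/n_Y)\pmod 2$, and comparison with the congruence from $q$ finishes the proof with no mention of $n_\Gamma$. Your moving-lemma argument that $n_U=n_Y$ for a dense open $U$ of a smooth projective $Y$ is a true, characteristic-free fact (provable by passing to a curve through a given closed point and using that a semilocal Dedekind ring is principal), so nothing in your write-up is wrong; its genuine contribution is to supply $n_X\mid n_Y$ without citation, which is a legitimate self-contained alternative to the references the paper invokes for that divisibility, at the cost of importing a nontrivial fact about zero-cycles that the paper's streamlined elimination avoids entirely.
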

\begin{proof}
The existence of a rational map $Y \dasharrow X$ implies that $n_X \mid n_Y$ (see e.g.\ \cite[Lemma~3.3]{euler} or \cite[Proposition~4.5]{invariants}). Note that $f$ automatically has a degree, since we are in one of the situations \eqref{ex:deg:2} or \eqref{ex:deg:3} of \S\ref{def:degree}. Let $Z$ be the scheme theoretic closure of the graph of $f$ in $Y \times X$. We have proper morphisms $h\colon Z \to Y$ and $g\colon Z \to X$. The statement follows by applying \autoref{cor:df} to the morphisms $h$ (of degree one) and $g$ (of degree $\deg f$), and using \eqref{eq:etd}.
\end{proof}

\section{Applications to incompressibility}
\label{sect:applications}
In this section, we describe classical consequences of the degree formula, which are now also valid in characteristic two. With the exception of \autoref{ex:involution_var}, the arguments are well-known and can be found for instance in \cite{Mer-df-notes,Mer-St-03,Kar-can}.

\begin{definition}[{\cite[p.150]{Kar-can}}]
\label{def:strong_incomp}
Let $p$ be a prime number. A complete integral variety $X$ is called \emph{strongly $p$-incompressible}  if for every complete integral variety $Y$ with $v_p(n_Y) \geq v_p(n_X)$ ($v_p$ is the $p$-adic valuation) and $\dim Y \leq \dim X$, and such that the index $n_{Y_{k(X)}}$ of the $k(X)$-variety $Y_{k(X)}$ is prime to $p$, we have $\dim X = \dim Y$ and the index $n_{X_{k(Y)}}$ of the $k(Y)$-variety $X_{k(Y)}$ is prime to $p$.
\end{definition}
A strongly $p$-incompressible variety (for some $p$) is in particular \emph{incompressible}: every rational map $X \dasharrow X$ is dominant.

\begin{proposition}
Let $X$ be a complete integral variety such that the integer $\etd{X}/n_X$ is odd. Then $X$ is strongly $2$-incompressible.
\end{proposition}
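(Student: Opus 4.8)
The plan is to unwind \autoref{def:strong_incomp} for $p=2$ and feed the two projections of a suitable correspondence into the degree formula \autoref{cor:df}. Fix a complete integral variety $Y$ with $v_2(n_Y)\ge v_2(n_X)$, $\dim Y\le \dim X$, and $n_{Y_{k(X)}}$ odd, and write $d=\dim X$. The first step is to translate ``$n_{Y_{k(X)}}$ is odd'' into geometry: a closed point of $Y_{k(X)}=Y\times_k\Spec k(X)$ is the generic point of an integral closed subscheme $V\subseteq Y\times X$ lying over the generic point of $X$, and its degree equals $[k(V):k(X)]$. Since an index is odd exactly when some closed point has odd degree, I obtain an integral closed subscheme $V\subseteq Y\times X$ with $\dim V=d$ whose projection $g\colon V\to X$ is dominant, generically finite of odd degree, and whose projection $h\colon V\to Y$ is proper. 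Both $g$ and $h$ then have a degree (by \autoref{ex:degree}, $\deg h$ is either $0$ or $[k(V):k(Y)]$), and $V$ is a complete integral, hence equidimensional, variety, so all hypotheses of \autoref{cor:df} are met.

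Next I would run the degree formula on $g\colon V\to X$. As $\deg g$ is odd and $\etd{X}/n_X$ is odd by assumption, the right-hand side of \autoref{cor:df} is $\equiv 1\bmod 2$; since the left-hand side is the product of the two integers $n_V/n_X$ and $\etd{V}/n_V$ (here $n_V\mid \etd{V}$ because $\etd{V}$ is the degree of the dimension-zero component of $\Sq(V)\in\CH(V)$, and every class in $\CH_0(V)$ has degree divisible by $n_V$), both factors must be odd. In particular $v_2(n_V)=v_2(n_X)$ and $\etd{V}/n_V$ is odd. I then combine this with two divisibilities that hold automatically for any morphism of complete varieties: from functoriality of $\Zo$ and $p_V=p_X\circ g=p_Y\circ h$ one gets $n_X\mid n_V$ and $n_Y\mid n_V$. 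The second gives $v_2(n_Y)\le v_2(n_V)=v_2(n_X)$, which together with the hypothesis $v_2(n_Y)\ge v_2(n_X)$ forces $v_2(n_Y)=v_2(n_V)$, so that $n_V/n_Y$ is odd as well.

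Finally I would apply \autoref{cor:df} to $h\colon V\to Y$. Its left-hand side is $(n_V/n_Y)\cdot(\etd{V}/n_V)\equiv 1\bmod 2$ by the previous step, so the right-hand side $\deg h\cdot(\etd{Y}/n_Y)$ must be odd; in particular $\deg h$ is odd, hence nonzero. Therefore $h$ is dominant, and since $\dim V=d\ge\dim Y$, dominance is incompatible with $\dim Y<d$ (that would force $\deg h=0$ by \autoref{ex:degree}). Thus $\dim Y=d=\dim X$, the first assertion. With equal dimensions, $V$ is a multisection of $Y$ of odd degree $\deg h=[k(V):k(Y)]$, so its generic point is a closed point of $X_{k(Y)}$ of odd degree; hence $n_{X_{k(Y)}}$ divides an odd number and is odd, the second assertion.

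The main obstacle I anticipate is not any single computation but the bookkeeping that glues everything together: one must set up the correspondence dictionary precisely (closed points of $Y_{k(X)}$ versus integral subschemes of $Y\times X$ dominating $X$, with residue degree equal to the cycle-theoretic degree of the projection), verify the degree and equidimensionality hypotheses of \autoref{cor:df} for both projections, and then track the $2$-adic valuations through the automatic divisibilities $n_X\mid n_V$, $n_Y\mid n_V$ together with the parity constraints from the two applications of the formula. The delicate point is that the oddness of $\etd{V}/n_V$ produced by the first application is exactly what makes the second application bite.
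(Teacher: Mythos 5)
Your proposal is correct and follows essentially the same route as the paper: take the closure $Z\subset Y\times X$ of an odd-degree closed point of $Y_{k(X)}$, apply \autoref{cor:df} to $Z\to X$ to get that $n_Z/n_X$ and $\etd{Z}/n_Z$ are odd, deduce from $v_2(n_Y)\geq v_2(n_X)=v_2(n_Z)$ that $n_Z/n_Y$ is odd, and then apply \autoref{cor:df} to $Z\to Y$ to force $\deg(Z\to Y)$ odd, whence $\dim Y=\dim X$ and $n_{X_{k(Y)}}$ odd. The only additions are explicit verifications (properness, equidimensionality, $n_V\mid\etd{V}$, the dictionary between closed points of $Y_{k(X)}$ and subvarieties of $Y\times X$) that the paper leaves implicit.
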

\begin{proof}
Let $Y$ be a complete integral variety with $v_2(n_Y) \geq v_2(n_X)$ such that $n_{Y_{k(X)}}$ is odd and $\dim Y \leq \dim X$. The $k(X)$-variety $Y_{k(X)}$ possesses a closed point of odd degree, let $Z \subset Y \times X$ be its closure. Applying \autoref{cor:df} to the morphism $Z \to X$ (of odd degree), we see that the integers $s_Z/n_Z$ and $n_Z/n_X$ are both odd. Thus $v_2(n_X) = v_2(n_Z)$. Since $\dim Y \leq \dim X = \dim Z$, the morphism $f\colon Z \to Y$ has a degree (\autoref{ex:degree}, \eqref{ex:deg:2}, \eqref{ex:deg:3}). The integer $n_Z/n_Y$ is odd, because of the assumption $v_2(n_Y) \geq v_2(n_X) = v_2(n_Z)$. Applying \autoref{cor:df} to the morphism $f$, we see that $\deg f$  must be odd. In particular $\dim Y = \dim Z = \dim X$, and the base change of $Z \to Y \times X$ along $X_{k(Y)} \to Y \times X$ is a closed point of odd degree of the $k(Y)$-variety $X_{k(Y)}$.
\end{proof}

\begin{example}[Severi-Brauer varieties]
\label{ex:SB}
Let $X$ be the Severi-Brauer variety of a central division $k$-algebra of degree $2^n$. Then $n_X=2^n$ (see \cite[Example~6.1]{Mer-St-03}). Since $X$ becomes isomorphic to the projective space $\mathbb{P}^{2^n-1}$ after some extension of the base field, we have (see \cite[Proposition~7.1]{Mer-df-notes})
\[
\etd{X}=\deg c(-\Tan_X) = \deg c_{2^n-1}(-\Tan_{\mathbb{P}^{2^n-1}}) = \binom{-2^n}{2^n-1} = -\binom{2^{n+1}-2}{2^n-1}.
\]
The dyadic valuation of this integer is $n$. Thus $X$ is strongly $2$-incompressible.

According to \cite[Example~2.3]{Kar-can}, the Severi-Brauer variety of a $p$-primary central division $k$-algebra is strongly $p$-incompressible when the characteristic of $k$ differs from $p$, but it is unknown whether this holds in characteristic $p$. The special case of algebras of prime degree $p$ was later settled in \cite[Corollary~10.2]{firstst}. The general case is thus now settled when $p=2$.
\end{example}

\begin{example}[Quadrics]
\label{ex:quadrics}
Let $\varphi$ be a non-degenerate anisotropic quadratic form of dimension $2^n+1$ with $n\geq 1$. The corresponding projective quadric $X$ is a smooth variety of dimension $2^n-1$. We have $n_X=2$ by Springer's Theorem. By \cite[Lemma~78.1]{EKM} (which is valid in any characteristic), we have
\[
\Sq(X)=c(-\Tan_X) = (1+h)^{-2^n-1} \mod 2 \CH(X),
\]
where $h\in \CH(X)$ is the hyperplane class, a nilpotent element of the ring $\CH(X)$. Using the relation $\deg (h^{2^n-1})=2$, we compute in $\Zz/4$
\[
\etd{X} = 2\binom{-2^n-1}{2^n-1} = -2\binom{2^{n+1}-1}{2^n-1} = 2 \mod 4.
\]
Thus $X$ is strongly $2$-incompressible. In particular $X$ is incompressible, or equivalently by \cite[Theorem~90.2]{EKM} the first Witt index of $\varphi$ is one. More precisely, as explained in \cite[\S5]{Mer-df-notes} and \cite[\S7.5]{Mer-St-03}, one may deduce two theorems on the splitting behaviour of non-degenerate quadratic forms, originally due (in characteristic not two) to Hoffmann \cite{Hoffmann-separation} and Izhboldin \cite{Izhboldin-motivic2}. In characteristic two, they have been proved by a different method in \cite{Hoffmann-Laghribi}. The theorem of Hoffmann implies the following bound on the first Witt index (see \cite[Example~79.7]{EKM}, \cite[Lemma~4.1]{Hoffmann-Laghribi}): any non-degenerate anisotropic quadratic form of dimension $d\geq 2^n +1$ has first Witt index $\leq d-2^n$.
\end{example}

\begin{example}[Involutions varieties]
\label{ex:involution_var}
Let $D$ be a non-trivial finite dimensional central division $k$-algebra equipped with a quadratic pair \cite[\S5.B]{KMRT}, and $X$ the variety of isotropic right ideals of reduced dimension one in $D$ (the involution variety). The variety $X$ is a closed subscheme of the Severi-Brauer variety of $D$. The presence of an involution of the first kind on the algebra $D$ forces its degree to be of the form $2^n$ for some $n \geq 1$ \cite[Corollary~2.8]{KMRT}, and thus $2^n \mid n_X$ by \autoref{ex:SB}. Now $X$ becomes isomorphic to a (split) smooth projective quadric $Q$ of dimension $2^n-2$ after some extension of the base field. Denoting by $h \in \CH(Q)$ the hyperplane class, we see that 
\[
\etd{X} = \etd{Q}= \deg c(-\Tan_Q) = \deg \frac{1+2h}{(1+h)^{2^n}}
\]
(see the proof of \cite[Lemma~78.1]{EKM}). Since $\deg (h^{2^n-2}) =2$, this gives
\[
\etd{X} = 2\binom{-2^n}{2^n-2} + 4 \binom{-2^n}{2^n-3} = 2 \binom{2^{n+1}-3}{2^n-2} - 4\binom{2^{n+1}-4}{2^n-3}.
\]
The dyadic valuation of this integer is $n$. Thus $X$ is strongly $2$-incompressible (it was known to be $2$-incompressible \cite[Proposition~4.5.2]{Zhy-phd}). Note that, in contrast to both the above examples, the integer $\dim X +1$ is not a power of two (unless $D$ is a quaternion algebra, in which case $\dim X=0$).
\end{example}

\end{document}